% Visionner le code LaTeX

%% LyX 2.0.0 created this file.  For more info, see http://www.lyx.org/.
%% Do not edit unless you really know what you are doing.
\documentclass[oneside,english]{amsart}

\usepackage[T1]{fontenc}
\usepackage[latin9]{inputenc}
\usepackage{textcomp}
\usepackage{amsthm}
\usepackage{amstext}
\usepackage{amssymb}
\usepackage[all]{xy}

\makeatletter

%%%%%%%%%%%%%%%%%%%%%%%%%%%%%% LyX specific LaTeX commands.
\newcommand{\lyxmathsym}[1]{\ifmmode\begingroup\def\b@ld{bold}
  \text{\ifx\math@version\b@ld\bfseries\fi#1}\endgroup\else#1\fi}

%%%%%%%%%%%%%%%%%%%%%%%%%%%%%% Textclass specific LaTeX commands.
\numberwithin{equation}{section}
\numberwithin{figure}{section}
\theoremstyle{plain}
\newtheorem{thm}{\protect\theoremname}
  \theoremstyle{plain}
  \newtheorem{lem}[thm]{\protect\lemmaname}
  \theoremstyle{plain}
  \newtheorem{cor}[thm]{\protect\corollaryname}
  \theoremstyle{plain}
  \newtheorem{prop}[thm]{\protect\propositionname}

\makeatother

\usepackage{babel}
  \providecommand{\corollaryname}{Corollary}
  \providecommand{\lemmaname}{Lemma}
  \providecommand{\propositionname}{Proposition}
\providecommand{\theoremname}{Theorem}

\title[On some subvarieties]{On some subvarieties of the cartesian powers of a semisimple algebra related to a parabolic subalgebra}

\author{Mouchira Zaiter}

\address{Universit\'e Paris 7 - CNRS \\
Institut de Math\'ematiques de Jussieu \\
Th\'eorie des groupes \\
Case 7012 \\ B\^atiment Chevaleret \\
75205 Paris Cedex 13, France}

\email{zaiter@math.jussieu.fr}
 
\begin{document}

\date\today
\maketitle

\begin{abstract}
In this note, we describe some desingularizations of some subvarieties of the
cartesian powers of a semisimple Lie algebra of finite dimension.
\end{abstract}
\maketitle
\setcounter{tocdepth}{1}
\tableofcontents

\section{Introduction}

The basic field $\Bbbk$ is an algebraic closed field of characteristic
zero. Let $\mathfrak{g}$ be a semisimple Lie algebra of finite dimension
and let $G$ be its adjoint group. Let $\mathfrak{b}$ be a Borel
subalgebra of $\mathfrak{g}$, $\mathfrak{h}$ a Cartan subalgebra
of $\mathfrak{g}$ contained in $\mathfrak{b}$ and let $N_{G}\left(\mathfrak{h}\right)$
be the normalizer of $\mathfrak{h}$ in $G$. Let denote by $\mathfrak{p}$
a parabolic subalgebra of $\mathfrak{g}$ containing $\mathfrak{b}$,
$\mathfrak{l}$ its reductive factor containing $\mathfrak{h}$, $\mathfrak{p}_{\mathrm{u}}$
its nilpotent radical and $\mathbf{P}$ its normalizer in $G$.
Let set:
\[
\begin{array}{c}
\mathcal{P}^{\left(k\right)}:=\left\{ \left(x_{1},\ldots,x_{k}\right)\in\mathfrak{g}^{k}\mid\exists g\in G\textrm{ such that }\left(g\left(x_{1}\right),\ldots,g\left(x_{k}\right)\right)\in\mathfrak{p}^{k}\right\} ,\\
\mathcal{P}_{\mathfrak{\mathrm{u}}}^{\left(k\right)}:=\left\{ \left(x_{1},\ldots,x_{k}\right)\in\mathfrak{g}^{k}\mid\exists g\in G\textrm{ such that }\left(g\left(x_{1}\right),\ldots,g\left(x_{k}\right)\right)\in\mathfrak{p}_{\mathrm{u}}^{k}\right\} .
\end{array}
\]
For $X$ a $G$-variety, we denote by $G\times_{\mathbf{P}}X$ the
quotient of $G\times X$ under the right action of $\mathbf{P}$ given
by $\left(g,x\right).p:=\left(gp,p^{-1}.x\right)$. The main result of this note is the following theorem:
\begin{thm}
\label{thm1}(i) For $k\geqslant2$, $G\times_{\mathbf{P}}\mathfrak{p}^{k}$
is a desingularization of $\mathcal{P}^{\left(k\right)}$. Moreover,
$\mathcal{P}^{\left(k\right)}$ is not normal.

(ii) For  $k\geqslant2$, the canonical morphism $G\times_{\mathbf{P}}\mathfrak{p}_{\mathrm{u}}^{k}\rightarrow\mathcal{P}_{\mathrm{u}}^{\left(k\right)}$ is projective and factorizes through a desingularization of an affine variety $X^{\left(k\right)}$ and the morphism $X^{\left(k\right)}\rightarrow\mathcal{P}_{\mathrm{u}}^{\left(k\right)}$ is finite and pure in the sense of \cite{key-11}.
\end{thm}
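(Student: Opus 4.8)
The plan is to realize the canonical morphism as a projective map, to take its Stein factorization in order to produce the intermediate affine variety $X^{(k)}$, and then to prove that the first factor is birational, so that it becomes a desingularization.

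First I would establish projectivity. Identifying $G\times_{\mathbf P}\mathfrak p_{\mathrm u}^{k}$ with the incidence variety
\[
Z:=\bigl\{(g\mathbf P,(y_{1},\dots,y_{k}))\in G/\mathbf P\times\mathfrak g^{k}\mid g^{-1}(y_{i})\in\mathfrak p_{\mathrm u}\text{ for all }i\bigr\},
\]
one checks that $Z$ is a $G$-stable vector subbundle of the trivial bundle $G/\mathbf P\times\mathfrak g^{k}$ over $G/\mathbf P$, hence a smooth closed subvariety; the canonical morphism is then the restriction to $Z$ of the second projection $G/\mathbf P\times\mathfrak g^{k}\to\mathfrak g^{k}$. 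Since $G/\mathbf P$ is complete, this projection is proper, so its image is closed, and a direct set-theoretic check identifies that image with $\mathcal P_{\mathrm u}^{(k)}$, which is therefore a closed, conical, $G$-stable (in particular affine) subvariety of $\mathfrak g^{k}$; moreover the morphism $\pi\colon G\times_{\mathbf P}\mathfrak p_{\mathrm u}^{k}\to\mathcal P_{\mathrm u}^{(k)}$ is projective.

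Next I would construct $X^{(k)}$ by Stein factorization. As $\pi$ is proper with affine target, $\pi_{*}\mathcal O$ is a coherent sheaf of $\mathcal O_{\mathcal P_{\mathrm u}^{(k)}}$-algebras, and I set $X^{(k)}:=\mathrm{Spec}_{\mathcal P_{\mathrm u}^{(k)}}\pi_{*}\mathcal O$, an affine variety, so that $\pi$ factors as $G\times_{\mathbf P}\mathfrak p_{\mathrm u}^{k}\xrightarrow{\rho}X^{(k)}\xrightarrow{\nu}\mathcal P_{\mathrm u}^{(k)}$ with $\rho$ proper having connected fibres and $\nu$ finite. Because the source $G\times_{\mathbf P}\mathfrak p_{\mathrm u}^{k}$ is smooth, hence normal, the sheaf $\pi_{*}\mathcal O$ is integrally closed and $X^{(k)}$ is normal. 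It then remains to prove that $\rho$ is birational: combined with properness, connected fibres, and smoothness of the source, this makes $\rho$ a desingularization of $X^{(k)}$, while a finite birational morphism $\nu$ onto $\mathcal P_{\mathrm u}^{(k)}$ with normal source $X^{(k)}$ is precisely the normalization.

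The heart of the argument, and the step where the hypothesis $k\geqslant 2$ is used, is the generic injectivity of $\pi$. For a single coordinate ($k=1$) the fibre of $\pi$ over a Richardson element can be positive-dimensional or consist of several points, but for $k\geqslant 2$ I would exhibit a dense $G$-stable open subset of $\mathcal P_{\mathrm u}^{(k)}$ over which a $k$-tuple lies in the nilradical of a \emph{unique} parabolic conjugate to $\mathfrak p$; the fibre of $\pi$, namely $\{g\mathbf P\mid g^{-1}(y_{i})\in\mathfrak p_{\mathrm u}\ \forall i\}$, is then a single reduced point. Concretely I would restrict to tuples whose components generically span a subalgebra rigid enough to determine $\mathfrak p_{\mathrm u}$, reducing the count to the regular situation inside the Levi factor $\mathfrak l$; carrying this out uniformly in $\mathfrak p$ is the main obstacle. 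Granting generic injectivity, $\rho$ is birational and the factorization has the asserted shape. Finally, to see that $\nu$ is pure in the sense of \cite{key-11}, I would use the $G$-equivariance together with the conical $\mathbb{G}_{m}$-action on $\mathcal P_{\mathrm u}^{(k)}$ to produce an $\mathcal O(\mathcal P_{\mathrm u}^{(k)})$-linear splitting of the inclusion $\mathcal O(\mathcal P_{\mathrm u}^{(k)})\hookrightarrow\mathcal O(X^{(k)})$, yielding the required purity; the non-normality of $\mathcal P_{\mathrm u}^{(k)}$, so that $\nu$ is not an isomorphism, is expected exactly as in part (i).
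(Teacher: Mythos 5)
Your proposal does not address part (i) of the theorem at all. In the paper, the birationality of $\mu_{k}\colon G\times_{\mathbf P}\mathfrak p^{k}\to\mathcal P^{(k)}$ rests on a mechanism entirely absent from your outline: the subspaces $V_{x,y}$, the fact that for $(x,y)\in\Omega_{\mathfrak g}\cap\mathfrak p\times\mathfrak p$ one has $\mathfrak p_{\mathrm u}\subset V_{x,y}\subset\mathfrak p$ (Corollaries \ref{cor:Vxy=00003Db} and \ref{cor:Vxy=00003DVl+pu}), and the rigidity statement (Proposition \ref{prop-Parab}) that two $G$-conjugate parabolics both containing $\mathfrak p_{\mathrm u}$ are equal; non-normality is then read off from the fibre of $\mu_{k}$ over $(x,0,\dots,0)$, $x\in\mathfrak h_{\mathrm{reg}}$, which has $|W/W_{\mathfrak l}|>1$ elements, against Zariski's main theorem. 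None of this is recoverable from your text, so half of the statement is simply unproved.

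For part (ii) your skeleton (projectivity via the collapsing of a homogeneous bundle, an intermediate affine variety, finiteness, purity) matches the paper, but the step you yourself call ``the main obstacle'' --- generic injectivity of $\pi$ onto $\mathcal P_{\mathrm u}^{(k)}$ --- is precisely where all the substance lies, and it is not how the paper argues. The paper never asserts that a generic $k$-tuple lies in the nilradical of a unique conjugate of $\mathfrak p$: it defines $X$ as $\mathrm{Spec}$ of the integral closure of $\Bbbk[G(\mathfrak p_{\mathrm u})]$ in $\Bbbk(G\times_{\mathbf P}\mathfrak p_{\mathrm u})$ (so that $G\times_{\mathbf P}\mathfrak p_{\mathrm u}\to X$ is birational by construction), takes $X^{(k)}$ to be the image of $G\times_{\mathbf P}\mathfrak p_{\mathrm u}^{k}$ in $X^{k}$, and proves injectivity of $\tau_{X^{(k)}}$ only over the locus where some coordinate sits over the Richardson orbit, via the stabilizer computation $G^{\iota(1,x)}=\mathbf P\cap G^{x}$ (Proposition \ref{prop:stab-yota-x}, resting on Richardson's theorem that $(G^{x})_{0}\subset\mathbf P$ for $x\in\Omega\cap\mathfrak p_{\mathrm u}$). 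The residual finite morphism $X^{(k)}\to\mathcal P_{\mathrm u}^{(k)}$ is there exactly to absorb the index $[G^{x}\colon\mathbf P\cap G^{x}]$, i.e.\ the part of the component group of a Richardson element not captured by $\mathbf P$; your claimed generic injectivity onto $\mathcal P_{\mathrm u}^{(k)}$ itself would force this index to disappear from $\Bbbk(\mathcal P_{\mathrm u}^{(k)})$ and make $X^{(k)}$ the plain normalization, a much stronger statement than the paper establishes, and you offer no argument for it beyond the intention to ``restrict to tuples whose components span a subalgebra rigid enough.'' Finally, your purity mechanism (a $\mathbb G_{m}$-equivariant splitting of $\Bbbk[\mathcal P_{\mathrm u}^{(k)}]\hookrightarrow\Bbbk[X^{(k)}]$) is only a hope; the paper obtains purity by averaging over the finite Galois group $\Gamma=G^{x}/(G^{x})_{0}$ acting on the integral closure $\mathcal C^{(k)}$, exhibiting $\Bbbk[(\mathcal P_{\mathrm u}^{(k)})_{\mathrm n}]$ as a direct factor (Proposition \ref{prop:direct-fact}), and the purity it proves is that of $X^{(k)}\to(\mathcal P_{\mathrm u}^{(k)})_{\mathrm n}$, not of $X^{(k)}\to\mathcal P_{\mathrm u}^{(k)}$ directly. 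As it stands, the two decisive steps of your plan are missing.
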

For $\mathfrak{p=\mathfrak{b}},$ this theorem is given in \cite{key-1},
a joint work with J-Y. Charbonnel. Moreover, in this case, $\mathcal{P}_{\mathrm{u}}^{\left(k\right)}$
is normal and has a rational singularities. In fact, this note is the first step to a generalization of some results of \cite{key-1}.

Let $S\left(\mathfrak{g}\right)$ and $S\left(\mathfrak{h}\right)$
be the symmetric algebras of $\mathfrak{g}$ and $\mathfrak{h}$ respectively
and let $S\left(\mathfrak{h}\right)^{W}$ be the subalgebra of $W$-invariant
elements of $S\left(\mathfrak{h}\right)$, where $W$ is the Weyl
group of $\mathfrak{g}$ with respect to $\mathfrak{h}$. In section
3, we use a variety $\chi_{0}$ introduced in \cite{key-1}, which is the closed subvariety of $\mathfrak{g}\times\mathfrak{h}$ such
that its algebra of regular functions $\Bbbk\left[\chi_{0}\right]$
equals $S\left(\mathfrak{g}\right)\otimes_{S\left(\mathfrak{h}\right)^{W}}S\left(\mathfrak{h}\right)$,
and we introduce the varieties $\chi:=\chi_{0}//W_{\mathfrak{l}}$,
where $W_{\mathfrak{l}}$ is the Weyl group of the Levi factor $\mathfrak{l}$
of $\mathfrak{p}$ with respect to $\mathfrak{h}$. We prove that
$G\times_{\mathbf{P}}\mathfrak{p}^{k}$ is a desingularization of a subvariety
$\chi^{\left(k\right)}$ of $\chi^k$.

In section 4, we introduce the variety $X:=Spec\mathcal{A}$ and $X^{\left(k\right)}$
a subvariety of $X^{k}$, where $\mathcal{A}$ is the integral closure
of the algebra of regular functions $\Bbbk\left[G\left(\mathfrak{p}_{\mathrm{u}}\right)\right]$
of $G\left(\mathfrak{p}_{\mathrm{u}}\right)$ in the field of rational
functions $\Bbbk\left(G\times_{\mathbf{P}}\mathfrak{p}_{\mathrm{u}}\right)$
of $G\times_{\mathbf{P}}\mathfrak{p}_{\mathrm{u}}$ and we prove that
$G\times_{\mathbf{P}}\mathfrak{p}_{\mathrm{u}}^{k}$ is a desingularization
of $X^{\left(k\right)}$.

\section{Notations}

We consider the diagonal action of $G$ on $\mathfrak{g}^{k}$.  Let $\mathcal{R}$
be the root system of $\mathfrak{h}$ in $\mathfrak{g}$ and let $\mathcal{R}_{+}$
be the positive root system of $\mathcal{R}$ defi{}ned by $\mathfrak{b}$.
The Weyl group of $\mathcal{R}$ is denoted by $W$ and the basis
of $\mathcal{R}_{+}$ is denoted by $\Pi$. Let $\Omega$ be the Richardson orbit of $\mathfrak{p}$ and let $\mathfrak{p}'_{\mathrm{u}}:=\Omega\cap\mathfrak{p}_{\mathrm{u}}$. Set: 
\[
\begin{array}{c}
\mathcal{R}_{\mathfrak{l}}:=\left\{ \alpha\in\mathcal{R}\mid\mathfrak{g}^{\alpha}\subset\mathfrak{l}\right\} \\
\mathfrak{p}_{-}:=\mathfrak{l}\oplus\bigoplus_{\alpha\in\mathcal{R}_{+}\setminus\mathcal{R}_{\mathfrak{l}}}\mathfrak{g}^{-\alpha}.
\end{array}
\]
 Let $W_{\mathfrak{l}}$ be the Weyl group of $\mathcal{R}_{\mathfrak{l}}$ and let  $\mathbf{P}_-$ be the normalizer of $\mathfrak{p}_-$ in $G$. Denote
by $\mathbf{L}$ the identity component of the normalizer of $\mathfrak{l}$ in $G$ and denote by $\mathbf{P}_{\mathrm{u}}$ and $\mathbf{P}_{-,\mathrm{u}}$ the unipotent radicals of $\mathbf{P}$ and $\mathbf{P}_-$ respectively.
 We use the following notations: 
\begin{itemize}
\item All topological terms refer to the Zariski topology. For $Y$ an open
subset of the algebraic variety $X$, $Y$ is called a big open subset
if the codimension of $X\setminus Y$ in $X$ is at least $2$. The algebra of regular functions on $X$ is denoted by $\Bbbk[X]$. When $X$ is irreducible the field of rational functions on $X$ is denoted by $\Bbbk\left(X\right)$.
\item if $G\times A\longrightarrow A$ is an action of $G$ on the algebra
$A$, denote by $A^{G}$ the subalgebra of the $G$-invariant elements
of $A,$ 
\item for $i=1,\ldots,\mathrm{rk}\mathfrak{g}$, the 2-order polarizations
of $p_{i}$ of bidegree $(d_{i}-n,n)$, denoted by $p_{i}^{(n)}$, are
the unique elements in $(S(\mathfrak{g})\otimes_{\mathbb{C}}S(\mathfrak{g}))^{G}$
satisfying the following relation 
\[
p_{i}(ax+by)=\sum\limits _{n=0}^{d_{i}}a^{d_{i}-n}b^{n}p_{i}^{(n)}(x,y),
\]
 for all $a,b\in\mathbb{C}$ and for all $(x,y)\in\mathfrak{g}\times\mathfrak{g}$, 
\item $<.,.>$ is the Killing form of $\mathfrak{g}$, 
\item for $i\in\left\{ 1,\ldots,\mathrm{rk}\mathfrak{g}\right\} $, $\varepsilon_{i}$
is the element of $S(\mathfrak{g})\otimes_{\mathbb{C}}\mathfrak{g}$
defined by 
\[
<\varepsilon_{i}(x),v>=p_{i}'(x)(v),\mbox{ }\forall x,v\in\mathfrak{g},
\]
 where $p_{i}'(x)$ is the differential of $p_{i}$ at $x$ for all
$i\in\left\{ 1,\ldots\mathrm{rk}\mathfrak{g}\right\} $, 
\item for $i\in\left\{ 1,\ldots,\mathrm{rk}\mathfrak{g}\right\} $ and for
$m\in\left\{ 0,\ldots,d_{i}-1\right\} $, the $2$-polarizations of
$\varepsilon_{i}$ of bidegree $(d_{i}-m-1,m)$ denoted by $\varepsilon_{i}^{(m)}$
are the unique elements in\linebreak $S(\mathfrak{g})\otimes_{\mathbb{C}}S(\mathfrak{g})\otimes_{\mathbb{C}}\mathfrak{g}$
satisfying the following relation 
\[
\varepsilon_{i}(ax+by)=\sum\limits _{n=0}^{d_{i}-1}a^{d_{i}-n-1}b^{n}\varepsilon_{i}^{(n)}(x,y),
\]
 for all $a,b\in\mathbb{C}$ and $(x,y)\in\mathfrak{g}\times\mathfrak{g}$,
\item for $(x,y)\in\mathfrak{g}\times\mathfrak{g}$, $V_{x,y}$ is the space
generated by the set 
\[
\left\{ \varepsilon_{i}^{(m)}\left(x,y\right),i\in\left\{ 1,\ldots\mathrm{rk}\mathfrak{g}\right\} ,m\in\left\{ 0,\ldots,d_{i}-1\right\} \right\} ,
\]

\item $\mathfrak{g}_{\mathrm{reg}}$ is the set of regular elements of $\mathfrak{g}$,
\item for $\mathfrak{a}$ a subalgebra of $\mathfrak{g}$, $\mathfrak{a}_{\mathrm{reg}}:=\mathfrak{a}\cap\mathfrak{g}_{\mathrm{reg}}$, 
\item for $x$ in $\mathfrak{g}$, $G^{x}$ is the centralizer of $x$ in
$G$,
\item for $\mathfrak{a}$ a subalgebra of $\mathfrak{g}$ and for $x$ in
$\mathfrak{a}$, $\mathfrak{a}^{x}$ is the centralizer of $x$ in
$\mathfrak{a}$,
\item for $(x_1,\ldots,x_k)\in\mathfrak{g}^k$, $P_{(x_1,\ldots,x_k)}$ is the space
of $\mathfrak{g}$ generated by $x_1,\ldots,x_k$,
\item $\varpi$ is the canonical morphism from $\mathfrak{p}$ to $\mathfrak{l}$,
\item for $x$ in $\mathfrak{p}$, $\tilde{x}$ is the image of $x$ by
$\varpi$,
\item $\Omega_{\mathfrak{g}}:=\left\{ \left(x,y\right)\in\mathfrak{g\times\mathfrak{g}}\mid P_{x,y}\backslash\left\{ 0\right\} \subset\mathfrak{g}_{\mathrm{reg}}\textrm{ and }\mathrm{dim}\, P_{x,y}=2\right\} $,
\item $\left(e,h,f\right)$ is a principal $\mathfrak{sl}_{2}$-triplet
\item for $X$ an algebraic variety, $\mathcal{O}_{X}$ is its structural
sheaf, $\Bbbk[X]$ is the algebra of regular functions on $X$ and
$\Bbbk(X)$ is the fi{}eld of rational functions on $X$ when $X$
is irreducible.
\end{itemize}
The following Lemmas are well known:
\begin{lem}\label{Lem1.4}
Let $P$ and $Q$ be parabolic subgroups of $G$ such that $P$ is
contained in $Q$. Let $X$ be a $Q$-variety and let $Y$ be a closed
subset of $X$, invariant under $P$. Then $Q.Y$ is a closed subset
of $X$. Moreover, the canonical map from $Q\times_PY$ to $Q.Y$
is a projective morphism. \end{lem}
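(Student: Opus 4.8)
The plan is to reduce the whole statement to two standard facts: that the parabolic quotient $Q/P$ is a projective variety, and that the associated bundle over it degenerates to a trivial product once the fibre $X$ carries a $Q$-action. First I would record that, since $P\subset Q$ with $P$ parabolic, $P$ contains a Borel subgroup $B$ of $G$, and $B$ is then a Borel subgroup of $Q$; hence $P$ is a parabolic subgroup of $Q$ and the partial flag variety $Q/P$ is projective, in particular complete. Consequently the second projection $\mathrm{pr}\colon Q/P\times X\to X$ is a projective morphism, and so is its restriction to any closed subvariety of $Q/P\times X$, the image of such a restriction being closed in $X$ because $\mathrm{pr}$ is a closed map.

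Next I would set up the trivialization. The morphism $Q\times X\to Q/P\times X$, $(g,x)\mapsto(gP,\,g\cdot x)$, is invariant under the right $P$-action $(g,x)\cdot p=(gp,\,p^{-1}\cdot x)$ defining $Q\times_P X$, so it descends to a morphism $\theta\colon Q\times_P X\to Q/P\times X$, $[g,x]\mapsto(gP,\,g\cdot x)$. Because $X$ is a $Q$-variety, $\theta$ is an isomorphism: its inverse is $\sigma\colon(gP,y)\mapsto[g,\,g^{-1}\cdot y]$, which is a well-defined morphism since replacing $g$ by $gp$ gives $[gp,\,p^{-1}g^{-1}\cdot y]=[g,\,g^{-1}\cdot y]$. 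Under $\theta$ the canonical map $\kappa\colon Q\times_P X\to X$, $[g,x]\mapsto g\cdot x$, becomes exactly the projection $\mathrm{pr}$, that is, $\kappa=\mathrm{pr}\circ\theta$.

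Then I would identify $Q\times_P Y$ inside this product. Since $Y$ is closed in $X$, the construction $Q\times_P(-)$ carries the closed immersion $Y\hookrightarrow X$ to a closed immersion $Q\times_P Y\hookrightarrow Q\times_P X$; concretely, $\theta$ sends $Q\times_P Y$ onto the subset $Z:=\{(gP,x)\in Q/P\times X\mid g^{-1}\cdot x\in Y\}$. This set is well-defined thanks to the $P$-invariance of $Y$, and it is closed: its preimage under the quotient map $Q\times X\to Q/P\times X$, $(g,x)\mapsto(gP,x)$, equals $\{(g,x)\mid g^{-1}\cdot x\in Y\}$, which is the preimage of the closed set $Y$ under the morphism $(g,x)\mapsto g^{-1}\cdot x$. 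A direct computation shows $\mathrm{pr}(Z)=\{g\cdot y\mid g\in Q,\ y\in Y\}=Q.Y$.

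Assembling the pieces proves both assertions: $Q.Y=\mathrm{pr}(Z)$ is closed in $X$ because $Z$ is closed and $\mathrm{pr}$ is a closed map (as $Q/P$ is complete), and the canonical map $Q\times_P Y\to Q.Y$ is, via $\theta$, the corestriction of $\mathrm{pr}|_Z$, hence projective since $\mathrm{pr}$ is projective and $Z$ is closed. The only point needing a little care — the main, and rather mild, obstacle — is the claim that closedness of $Z$ descends along $Q\times X\to Q/P\times X$ (equivalently that $Q\times_P(-)$ preserves closed immersions); this is standard because $Q\to Q/P$ is a locally trivial principal $P$-bundle, hence faithfully flat, so the quotient topology on $Q/P\times X$ is the correct one.
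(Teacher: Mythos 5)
Your proof is correct and takes essentially the same approach as the paper: the isomorphism $Q\times_{P}X\cong Q/P\times X$ given by $[g,x]\mapsto(\bar{g},g.x)$, the identification of $Q\times_{P}Y$ with a closed subset of the product, and completeness of $Q/P$ to get closedness of the image and projectivity of the map. You are in fact slightly more careful at one point: the paper asserts that the image of $Q\times_{P}Y$ under this isomorphism equals $Q/P\times Q.Y$, which would require $Y$ to be $Q$-invariant, whereas your set $Z=\left\{ \left(gP,x\right)\mid g^{-1}.x\in Y\right\}$ is the correct image; the conclusion is unaffected.
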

\begin{proof}
Since $P$ and $Q$ are parabolic subgroups of $G$ and since $P$
is contained in $Q$, $Q/P$ is a projective variety. Let denote by
$Q\times_PX$ and $Q\times_PY$ the quotients of $Q\times X$
and $Q\times Y$ under the right action of $P$ given by $(g,x).p:=(gp,p^{-1}.x)$.
Let $g\mapsto\bar{g}$ be the quotient map from $Q$ to $Q/P$. Since
$X$ is a $Q$-variety, the map $$Q\times X\rightarrow Q/P\times X\atop(g,x)\mapsto(\bar{g},g.x)$$
defines through the quotient an isomorphism
from $Q\times_{P}X$ to $Q/P\times X$. Since $Y$ is a $P$-invariant
closed subset of $X$, $Q\times_{P}Y$ is a closed subset of $Q\times_{P}X$
and its image by the above isomorphism equals $Q/P\times Q.Y$. Hence
$Q.Y$ is a closed subset of $X$ since $Q/P$ is a projective variety.
From the commutative diagram 
\[
\xymatrix{Q\times_{P}Y\ar[r]\ar[rd] & Q/P\times Q.Y \ar[d]\\
 & Q.Y
}
\]
one deduces that the map $Q\times_{P}Y\rightarrow Q.Y$
is a projective morphism.\end{proof}

{\bf Acknowledgements}. I would like to thank Jean-Yves Charbonnel for his advice and help and for his rigorous attention to this work.

\section{On the variety $\mathcal{P}^{\left(k\right)}$}

\subsection{On parabolic subalgebras}
\begin{lem}
\label{lem:Vxy in a}Let $\mathfrak{a}$ be an algebraic subalgebra
of $\mathfrak{g}$. 

(i) Let suppose that $\mathfrak{a}$ contains $\mathfrak{g}^{x}$
for all $x$ in a nonempty open subset of $\mathfrak{a}$ and let
suppose that $\mathfrak{\mathfrak{a}}_{\mathrm{reg}}$ is not empty.
Then $V_{x,y}$ is contained in $\mathfrak{a}$ for all $\left(x,y\right)$
in $\mathfrak{a}\times\mathfrak{a}$.

(ii) Let suppose that $\mathfrak{a}$ contains a Cartan subalgebra
of $\mathfrak{g}$. Then $V_{x,y}$ is contained in $\mathfrak{a}$
for all $\left(x,y\right)$ in $\mathfrak{a}\times\mathfrak{a}$.\end{lem}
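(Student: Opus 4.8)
The plan is to reduce both parts to the single pointwise assertion that $\varepsilon_i(z)\in\mathfrak{a}$ for every $z\in\mathfrak{a}$ and every $i$. Granting this, fix $(x,y)\in\mathfrak{a}\times\mathfrak{a}$; since $\mathfrak{a}$ is a linear subspace, $ax+by\in\mathfrak{a}$ for all scalars $a,b$, and hence $\varepsilon_i(ax+by)\in\mathfrak{a}$. Reading the defining relation $\varepsilon_i(ax+by)=\sum_{n=0}^{d_i-1}a^{d_i-n-1}b^{n}\varepsilon_i^{(n)}(x,y)$ modulo $\mathfrak{a}$ gives a polynomial in $(a,b)$ with values in $\mathfrak{g}/\mathfrak{a}$ that vanishes identically; the monomials $a^{d_i-n-1}b^{n}$ being linearly independent, each coefficient $\varepsilon_i^{(n)}(x,y)$ must lie in $\mathfrak{a}$, so $V_{x,y}\subseteq\mathfrak{a}$. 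Thus everything reduces to controlling the single map $z\mapsto\varepsilon_i(z)$ on $\mathfrak{a}$.

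The one input I would use about this map is that $\varepsilon_i(z)\in\mathfrak{g}^{z}$ for all $z$. This is standard: differentiating $p_i(\exp(t\xi)\cdot z)=p_i(z)$ at $t=0$ yields $\langle\varepsilon_i(z),[\xi,z]\rangle=0$ for all $\xi\in\mathfrak{g}$, and the $\mathrm{ad}$-invariance of the Killing form rewrites this as $\langle\xi,[\varepsilon_i(z),z]\rangle=0$ for all $\xi$, whence $[\varepsilon_i(z),z]=0$. For part (i), consider $Z_i:=\{z\in\mathfrak{a}\mid\varepsilon_i(z)\in\mathfrak{a}\}$. Since $z\mapsto\varepsilon_i(z)$ is a morphism and $\mathfrak{a}$ is a closed linear subspace, $Z_i$ is Zariski closed in $\mathfrak{a}$. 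The hypotheses furnish a nonempty open subset of $\mathfrak{a}$ on which $\mathfrak{g}^{z}\subseteq\mathfrak{a}$ (this being where the assumption $\mathfrak{a}_{\mathrm{reg}}\neq\emptyset$ enters, guaranteeing the open set genuinely meets the regular locus); combined with $\varepsilon_i(z)\in\mathfrak{g}^{z}$, this open set lies in $Z_i$. As $\mathfrak{a}$ is irreducible the open set is dense, so $Z_i=\mathfrak{a}$ for every $i$, and the reduction above completes (i).

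For part (ii) I would deduce the hypotheses of (i). First, writing $\mathfrak{h}'$ for the Cartan subalgebra contained in $\mathfrak{a}$, we have $\mathfrak{a}_{\mathrm{reg}}\supseteq\mathfrak{h}'_{\mathrm{reg}}\neq\emptyset$. Next, since $\mathfrak{a}$ is a subalgebra containing $\mathfrak{h}'$, it is stable under $\mathrm{ad}\,\mathfrak{h}'$ and $\mathfrak{h}'$ is self-normalizing in $\mathfrak{a}$, so $\mathfrak{h}'$ is a Cartan subalgebra of $\mathfrak{a}$. Let $A$ be the connected algebraic subgroup of $G$ with Lie algebra $\mathfrak{a}$; then $\mathrm{Ad}(A)\mathfrak{a}=\mathfrak{a}$. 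For $h\in\mathfrak{h}'_{\mathrm{reg}}$ and $a\in A$ the element $x=\mathrm{Ad}(a)h$ is regular semisimple with $\mathfrak{g}^{x}=\mathrm{Ad}(a)\mathfrak{g}^{h}=\mathrm{Ad}(a)\mathfrak{h}'\subseteq\mathrm{Ad}(a)\mathfrak{a}=\mathfrak{a}$, so $\mathfrak{g}^{x}\subseteq\mathfrak{a}$ for every $x\in A\cdot\mathfrak{h}'_{\mathrm{reg}}$. It then remains only to know that this set, being constructible, contains a nonempty open subset of $\mathfrak{a}$, and part (i) applies.

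The hard part is exactly this last point: that $A\cdot\mathfrak{h}'$ is dense in $\mathfrak{a}$. I would obtain it from two standard facts about the algebraic Lie algebra $\mathfrak{a}$, namely that all its Cartan subalgebras are conjugate under $A$ and that its regular elements are dense and each lie in a Cartan subalgebra, so that $A\cdot\mathfrak{h}'$ contains the dense set of regular elements of $\mathfrak{a}$. Equivalently, one may argue directly by $A$-equivariance: $\varepsilon_i(\mathfrak{h}')\subseteq\mathfrak{h}'$ by the density argument of (i) applied inside $\mathfrak{h}'$, and then $\varepsilon_i(\mathrm{Ad}(a)h)=\mathrm{Ad}(a)\varepsilon_i(h)\in\mathrm{Ad}(a)\mathfrak{h}'\subseteq\mathfrak{a}$ on the dense subset $A\cdot\mathfrak{h}'$, whence $\varepsilon_i(\mathfrak{a})\subseteq\mathfrak{a}$ by closedness.
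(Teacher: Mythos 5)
Your proof is correct and follows essentially the same route as the paper: reduce everything to $\varepsilon_i(\mathfrak{a})\subseteq\mathfrak{a}$, obtain this on a dense open subset from $\varepsilon_i(z)\in\mathfrak{g}^{z}$ together with the hypothesis, extend by closedness and irreducibility of $\mathfrak{a}$, and recover $V_{x,y}\subseteq\mathfrak{a}$ by separating the coefficients of the polarization identity, with (ii) handled by verifying the hypothesis of (i) via conjugation inside the algebraic subalgebra. The only minor variations are that you derive $[\varepsilon_i(z),z]=0$ directly from the invariance of $p_i$ where the paper cites Kostant's Theorem~9, and that in (ii) you conjugate Cartan subalgebras of $\mathfrak{a}$ under $A$ (which forces you through a density-of-a-constructible-set step), whereas the paper conjugates semisimple elements of $\mathfrak{a}$ into the given Cartan subalgebra and can then work with the manifestly open, nonempty set of $\mathfrak{g}$-regular semisimple elements of $\mathfrak{a}$.
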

\begin{proof}
(i) By hypothesis, for all $x$ in a nonempty open subset of $\mathfrak{a}$,
$x$ is a regular element and $\mathfrak{g}^{x}$ is contained in
$\mathfrak{a}$. So by \cite{key-4} Theorem 9, $\varepsilon_{1}\left(x\right),\ldots,\varepsilon_{l}\left(x\right)$
belong to $\mathfrak{a}$ for all $x$ in a nonempty open subset of
$\mathfrak{a}$ and hence for all $x$ in $\mathfrak{a}$ by continuity.
As a result, for all $\left(x,y\right)$ in $\mathfrak{a}\times\mathfrak{a}$,
$\left\{ \varepsilon_{i}^{\left(m\right)}\left(x,y\right),i\in\left\{ 1,\ldots,l\right\} ,m\in\left\{ 0,\ldots,d_{i}-1\right\} \right\} $
is contained in $\mathfrak{a}$, whence the assertion.

(ii) Let $\mathfrak{c}$ be a Cartan subalgebra of $\mathfrak{g}$
contained in $\mathfrak{a}$. Since $\mathfrak{a}$ is an algebraic
subalgebra of $\mathfrak{g}$, all semisimple element of $\mathfrak{a}$
is conjugate under the adjoint group of $\mathfrak{a}$ to an element
of $\mathfrak{c}$. Hence for all regular semisimple element of $\mathfrak{g}$,
belonging to $\mathfrak{a}$, $\mathfrak{g}^{x}$ is contained in
$\mathfrak{a}$. As a result, the assertion is a consequence of (i)
since the subset of regular semisimple elements of $\mathfrak{g}$,
belonging to $\mathfrak{a}$, is a nonempty open subset of $\mathfrak{a}$.\end{proof}
\begin{cor}
\label{cor:Vxy=00003Db}For all $\left(x,y\right)$ in $\mathfrak{p\times p}$,
$V_{x,y}$ is contained in $\mathfrak{p}$. In particular, for all
$\left(x,y\right)$ in a nonempty open subset of $\mathfrak{b\times b}$,
$V_{x,y}=\mathfrak{b}$.\end{cor}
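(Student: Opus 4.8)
The first assertion will follow immediately from Lemma~\ref{lem:Vxy in a}(ii): the parabolic subalgebra $\mathfrak{p}$ is an algebraic subalgebra of $\mathfrak{g}$ containing $\mathfrak{b}$, hence the Cartan subalgebra $\mathfrak{h}$, so $V_{x,y}\subset\mathfrak{p}$ for all $(x,y)\in\mathfrak{p}\times\mathfrak{p}$; the same argument applied to $\mathfrak{b}$ gives $V_{x,y}\subset\mathfrak{b}$ for all $(x,y)\in\mathfrak{b}\times\mathfrak{b}$. It therefore remains only to produce a nonempty open subset of $\mathfrak{b}\times\mathfrak{b}$ on which the reverse inclusion $\mathfrak{b}\subset V_{x,y}$ holds.

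For the reduction I would note that $V_{x,y}$ is the linear span of the finite family $\{\varepsilon_i^{(m)}(x,y)\}$, each member depending polynomially on $(x,y)$; hence $(x,y)\mapsto\dim V_{x,y}$ is lower semicontinuous and the locus $\{\dim V_{x,y}\geqslant\dim\mathfrak{b}\}$ is open in $\mathfrak{b}\times\mathfrak{b}$. Since $V_{x,y}\subset\mathfrak{b}$ everywhere on $\mathfrak{b}\times\mathfrak{b}$ by the first part, on this locus one automatically has $V_{x,y}=\mathfrak{b}$. Thus it suffices to exhibit a single pair $(x,y)\in\mathfrak{b}\times\mathfrak{b}$ with $V_{x,y}=\mathfrak{b}$.

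I would take $(x,y)=(h,e)$ coming from the principal $\mathfrak{sl}_{2}$-triple $(e,h,f)$, normalized so that $h\in\mathfrak{h}$ and $e\in\mathfrak{b}$. First I would reinterpret $V_{h,e}$: unwinding the defining relation $\varepsilon_i(ax+by)=\sum_n a^{d_i-n-1}b^n\varepsilon_i^{(n)}(x,y)$ together with a Vandermonde argument shows that $V_{x,y}$ is the span of $\{\varepsilon_i(z)\mid z\in P_{x,y},\ i\in\{1,\ldots,l\}\}$, and for $z$ regular the elements $\varepsilon_1(z),\ldots,\varepsilon_l(z)$ form a basis of $\mathfrak{g}^{z}$ (Kostant). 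Next I would check that every nonzero element of the plane $P_{h,e}=\Bbbk h\oplus\Bbbk e$ is regular: since $[e,h]=-2e$ one has $\exp(u\,\mathrm{ad}\,e)(h)=h-2ue$, so $h+te$ is conjugate to the regular semisimple element $h$ for every $t\in\Bbbk$, while $e$ itself is regular nilpotent. Consequently $(h,e)\in\Omega_{\mathfrak{g}}$ and $V_{h,e}=\sum_{z}\mathfrak{g}^{z}$, the sum being taken over the nonzero $z$ in the plane.

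Finally, $\mathfrak{g}^{h+te}=\exp(-\tfrac{t}{2}\mathrm{ad}\,e)(\mathfrak{g}^{h})=\exp(-\tfrac{t}{2}\mathrm{ad}\,e)(\mathfrak{h})$, and letting $t$ vary (again Vandermonde) gives $V_{h,e}\supset\sum_{j\geqslant0}(\mathrm{ad}\,e)^{j}(\mathfrak{h})$. The main point, and the only nontrivial step, is to identify this last sum with $\mathfrak{b}$: decomposing $\mathfrak{g}$ into irreducible modules for the principal $\mathfrak{sl}_{2}$ and using that $\mathfrak{h}$ is exactly the zero weight space of $\mathrm{ad}\,h$ (one weight-zero vector in each of the $l$ components, of highest weight $2(d_i-1)$), repeated application of $\mathrm{ad}\,e$ to these vectors sweeps out precisely the weight spaces of weight $\geqslant0$, that is $\mathfrak{b}=\bigoplus_{n\geqslant0}\mathfrak{g}(n)$. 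Combined with the inclusion $V_{h,e}\subset\mathfrak{b}$ this yields $V_{h,e}=\mathfrak{b}$, and the reduction above then promotes this to a nonempty open subset of $\mathfrak{b}\times\mathfrak{b}$, completing the proof.
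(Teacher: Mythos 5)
Your proof is correct. The first half coincides exactly with the paper's: both deduce $V_{x,y}\subset\mathfrak{p}$ from Lemma \ref{lem:Vxy in a} (ii), since $\mathfrak{p}$ contains the Cartan subalgebra $\mathfrak{h}$. For the equality $V_{x,y}=\mathfrak{b}$ the two arguments diverge in how the key step is justified. The paper's proof is a one-liner: $(h,e)\in\Omega_{\mathfrak{g}}$, so $\Omega_{\mathfrak{g}}\cap(\mathfrak{b}\times\mathfrak{b})$ is a nonempty open set, and it implicitly invokes the general fact --- stated explicitly later, in the proof of Corollary \ref{cor:Vxy=00003DVl+pu}, and coming from the companion paper \cite{key-1} and \cite{key-7} --- that $\dim V_{x,y}=\mathrm{b}_{\mathfrak{g}}$ for every $(x,y)\in\Omega_{\mathfrak{g}}$; together with $V_{x,y}\subset\mathfrak{b}$ and $\dim\mathfrak{b}=\mathrm{b}_{\mathfrak{g}}$ this forces equality on that open set. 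You avoid citing this dimension formula: you verify $V_{h,e}=\mathfrak{b}$ by a direct principal $\mathfrak{sl}_{2}$ computation (every nonzero element of $P_{h,e}$ is regular because $\exp(-\tfrac{t}{2}\mathrm{ad}\,e)(h)=h+te$, so $V_{h,e}$ is the span of the $\mathfrak{g}^{z}$ for $z\in P_{h,e}\setminus\{0\}$, hence contains $\sum_{j\geqslant0}(\mathrm{ad}\,e)^{j}(\mathfrak{h})$, which the decomposition of $\mathfrak{g}$ into $l$ irreducible $\mathfrak{sl}_{2}$-modules of highest weights $2(d_{i}-1)$ identifies with the sum of the nonnegative $\mathrm{ad}\,h$-eigenspaces, i.e.\ with $\mathfrak{b}$), and you then propagate the equality to a nonempty open set by lower semicontinuity of $(x,y)\mapsto\dim V_{x,y}$. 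Each of these steps checks out, including the Vandermonde reductions. What your route buys is self-containedness --- it reproves the needed instance of the dimension statement from scratch --- at the cost of length; the open set you produce is a priori not $\Omega_{\mathfrak{g}}\cap(\mathfrak{b}\times\mathfrak{b})$ itself, but the corollary only asks for some nonempty open subset, so nothing is lost.
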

\begin{proof}
Since $\mathfrak{h}$ is contained in $\mathfrak{p}$, for all $\left(x,y\right)$
in $\mathfrak{p}\times\mathfrak{p}$, $V_{x,y}$ is contained in $\mathfrak{p}$
by Lemma \ref{lem:Vxy in a}, (ii). Since $\left(h,e\right)$ belongs
to $\Omega_{\mathfrak{g}}$, $\Omega_{\mathfrak{g}}\cap\mathfrak{b}\times\mathfrak{b}$
is a nonempty open subset, whence the corollary.\end{proof}
\begin{lem}
\label{Lm:Vxyc}Set:
\[
\begin{array}{ccc}
R_{\mathfrak{p}} & := & \left\{ x\in\mathfrak{g}_{\mathrm{reg}}\cap\mathfrak{p}|\varpi\left(x\right)\in\mathfrak{l}_{\mathrm{reg}}\right\} \\
R'_{\mathfrak{p}} & := & \left\{ x\in R_{\mathfrak{p}}\mid\mathfrak{g}^{x}\cap\mathfrak{p}_{\mathrm{u}}=\left\{ 0\right\} \right\} .
\end{array}
\]

(i) For all $x$ in $R_{\mathfrak{p}}$, $\varpi\left(\mathfrak{g}^{x}\right)=\mathfrak{l}^{\varpi\left(x\right)}$
if and only if $\mathfrak{g}^{x}\cap\mathfrak{p}_{\mathrm{u}}=\left\{ 0\right\} $.

(ii) The subset $R'_{\mathfrak{p}}$ is open in $\mathfrak{p}$.

(iii) For all $\left(x,y\right)$ in $\mathfrak{p}\times\mathfrak{p}$,
$V_{x,y}$ is contained in $V_{\varpi\left(x\right),\varpi\left(y\right)}^{\mathfrak{l}}+\mathfrak{p}_{\mathrm{u}}$.

(iv) For all $\left(x,y\right)$ in $R'_{\mathfrak{p}}\times\mathfrak{p}$,
$\varpi\left(V_{x,y}\right)=V_{\varpi\left(x\right),\varpi\left(y\right)}^{\mathfrak{l}}$.\end{lem}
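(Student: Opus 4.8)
The plan is to treat the four assertions in the order (i)--(ii)--(iii)--(iv), after first extracting the structural fact that $\mathfrak{g}^{x}\subseteq\mathfrak{p}$ for every regular $x\in\mathfrak{p}$. Indeed, by \cite{key-4}, Theorem 9, $\mathfrak{g}^{x}$ is spanned by $\varepsilon_{1}(x),\ldots,\varepsilon_{\ell}(x)$ for $x\in\mathfrak{g}_{\mathrm{reg}}$ (with $\ell:=\mathrm{rk}\,\mathfrak{g}$), while $\varepsilon_{i}(x)=\varepsilon_{i}^{(0)}(x,y)\in V_{x,y}\subseteq\mathfrak{p}$ by Corollary \ref{cor:Vxy=00003Db}; hence $\mathfrak{g}^{x}\subseteq\mathfrak{p}$ for $x\in\mathfrak{g}_{\mathrm{reg}}\cap\mathfrak{p}$. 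For (i), since $\varpi$ is a Lie algebra homomorphism with kernel $\mathfrak{p}_{\mathrm{u}}$, its restriction to $\mathfrak{g}^{x}$ gives an exact sequence $0\to\mathfrak{g}^{x}\cap\mathfrak{p}_{\mathrm{u}}\to\mathfrak{g}^{x}\to\varpi(\mathfrak{g}^{x})\to 0$ with $\varpi(\mathfrak{g}^{x})\subseteq\mathfrak{l}^{\varpi(x)}$. As $x\in R_{\mathfrak{p}}$ one has $\dim\mathfrak{g}^{x}=\ell=\mathrm{rk}\,\mathfrak{l}=\dim\mathfrak{l}^{\varpi(x)}$ (using $\mathfrak{h}\subseteq\mathfrak{l}$), so $\varpi(\mathfrak{g}^{x})=\mathfrak{l}^{\varpi(x)}$ is equivalent to $\dim\varpi(\mathfrak{g}^{x})=\ell$, i.e.\ to $\mathfrak{g}^{x}\cap\mathfrak{p}_{\mathrm{u}}=\{0\}$. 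For (ii), $R_{\mathfrak{p}}$ is open, being cut out by the two open conditions $x\in\mathfrak{g}_{\mathrm{reg}}$ and $\varpi(x)\in\mathfrak{l}_{\mathrm{reg}}$; on $R_{\mathfrak{p}}$ the centralizer dimension is constantly $\ell$, so $x\mapsto\mathfrak{g}^{x}$ is a morphism to the Grassmannian $\mathrm{Gr}(\ell,\mathfrak{g})$, and $R'_{\mathfrak{p}}$ is the preimage of the open locus $\{V\mid V\cap\mathfrak{p}_{\mathrm{u}}=\{0\}\}$, hence open.

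For (iii) the key is that $\varpi$ intertwines the gradient maps of $\mathfrak{g}$ and of $\mathfrak{l}$. Let $\lambda$ be the one-parameter subgroup of $\mathbf{L}$ defining the grading $\mathfrak{g}=\bigoplus_{i}\mathfrak{g}_{i}$ with $\mathfrak{g}_{0}=\mathfrak{l}$ and $\mathfrak{p}_{\mathrm{u}}=\bigoplus_{i>0}\mathfrak{g}_{i}$. For $x\in\mathfrak{p}$ one has $\lambda(t).x\to\varpi(x)$ as $t\to 0$, whence $p_{i}(x)=p_{i}(\varpi(x))$ by $G$-invariance and continuity. Differentiating, and using $\varepsilon_{i}(x)\in\mathfrak{p}$ together with $\mathfrak{p}_{\mathrm{u}}\perp\mathfrak{l}$ for the Killing form, one obtains $\varpi(\varepsilon_{i}(x))=\eta_{i}(\varpi(x))$, where $\eta_{i}$ denotes the gradient on $\mathfrak{l}$ of $p_{i}|_{\mathfrak{l}}$. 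Polarizing this identity in $ax+by$ gives $\varpi(\varepsilon_{i}^{(m)}(x,y))=\eta_{i}^{(m)}(\varpi(x),\varpi(y))$; since $p_{i}|_{\mathfrak{l}}$ is a polynomial in the basic invariants of $\mathfrak{l}$, the chain rule places each $\eta_{i}^{(m)}(\varpi(x),\varpi(y))$ in $V_{\varpi(x),\varpi(y)}^{\mathfrak{l}}$. Thus $\varpi(V_{x,y})\subseteq V_{\varpi(x),\varpi(y)}^{\mathfrak{l}}$, and as $V_{x,y}\subseteq\mathfrak{p}$ this yields $V_{x,y}\subseteq V_{\varpi(x),\varpi(y)}^{\mathfrak{l}}+\mathfrak{p}_{\mathrm{u}}$.

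Part (iv) is the delicate point; the inclusion $\varpi(V_{x,y})\subseteq V_{\varpi(x),\varpi(y)}^{\mathfrak{l}}$ is (iii), and I plan to obtain the reverse inclusion by a density argument after reformulating both spaces as gradient values along the plane $\Pi:=\Bbbk\varpi(x)+\Bbbk\varpi(y)$. A Vandermonde argument on the polarization identity shows $V_{\varpi(x),\varpi(y)}^{\mathfrak{l}}=\sum_{\xi\in\Pi}\sum_{j}\Bbbk\,\varepsilon_{j}^{\mathfrak{l}}(\xi)$ and $\varpi(V_{x,y})=\sum_{\xi\in\Pi}\sum_{i}\Bbbk\,\eta_{i}(\xi)$. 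The crux is that $x\in R'_{\mathfrak{p}}$ forces $\varpi(x)$ to be \emph{regular in} $\mathfrak{g}$: if $(\mathfrak{p}_{\mathrm{u}})^{\varpi(x)}\neq\{0\}$, choose a nonzero $\varpi(x)$-fixed vector $w$ of maximal degree $j>0$; since $\mathrm{ad}\,\varpi(x)$ is then bijective on each $\mathfrak{g}_{j'}$ with $j'>j$, one solves degree by degree for corrections and lifts $w$ to a nonzero element of $\mathfrak{g}^{x}\cap\mathfrak{p}_{\mathrm{u}}$, contradicting $x\in R'_{\mathfrak{p}}$. Hence $(\mathfrak{p}_{\mathrm{u}})^{\varpi(x)}=\{0\}$, and by Killing duality between $\mathfrak{g}_{j}$ and $\mathfrak{g}_{-j}$ also $(\mathfrak{p}_{-,\mathrm{u}})^{\varpi(x)}=\{0\}$, so $\mathfrak{g}^{\varpi(x)}=\mathfrak{l}^{\varpi(x)}$ has dimension $\ell$ and $\varpi(x)\in\mathfrak{g}_{\mathrm{reg}}$.

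It follows that the $\mathfrak{g}$-regular locus of $\Pi$ is a nonempty, hence dense, open subset. For $\xi\in\Pi\cap\mathfrak{g}_{\mathrm{reg}}$ one has $\mathfrak{g}^{\xi}=\mathfrak{l}^{\xi}\subseteq\mathfrak{l}$, so $\mathrm{span}_{i}\{\eta_{i}(\xi)\}=\varpi(\mathfrak{g}^{\xi})=\mathfrak{l}^{\xi}$, which contains every $\varepsilon_{j}^{\mathfrak{l}}(\xi)$ by \cite{key-4}, Theorem 9 applied to $\mathfrak{l}$; thus $\varepsilon_{j}^{\mathfrak{l}}(\xi)\in\varpi(V_{x,y})$ on a dense open subset of $\Pi$. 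As $\varpi(V_{x,y})$ is a fixed linear subspace and $\xi\mapsto\varepsilon_{j}^{\mathfrak{l}}(\xi)$ is continuous, this persists for all $\xi\in\Pi$, giving $V_{\varpi(x),\varpi(y)}^{\mathfrak{l}}\subseteq\varpi(V_{x,y})$ and hence equality. The main obstacle is precisely the regularity of $\varpi(x)$ in $\mathfrak{g}$: the reformulation and the density step are formal once this is known, and it is exactly this point that makes $R'_{\mathfrak{p}}$, rather than merely $R_{\mathfrak{p}}$, the correct hypothesis.
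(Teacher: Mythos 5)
Your proof is correct, and while (i) and (ii) coincide with the paper's argument, your treatment of (iii) and (iv) takes a genuinely different route. For (iii) the paper invokes Dixmier's theorem that the module $L_{\mathfrak{l}}$ of adjoint vector fields on $\mathfrak{l}$ is free of rank $\mathrm{rk}\,\mathfrak{g}$, expands $\varpi\circ\varepsilon_{i}(x)$ in a basis $\varphi_{1},\ldots,\varphi_{\ell}$ of that module over the big open subset where $\varpi(x)\in\mathfrak{l}_{\mathrm{reg}}$, and extends the coefficient functions to all of $\mathfrak{p}$ using normality of $\mathfrak{p}$; you instead derive the exact intertwining $\varpi\circ\varepsilon_{i}=\eta_{i}\circ\varpi$ on all of $\mathfrak{p}$ from the contraction $\lambda(t).x\to\varpi(x)$ (giving $p_{i}|_{\mathfrak{p}}=(p_{i}|_{\mathfrak{l}})\circ\varpi$) together with $\mathfrak{p}^{\perp}=\mathfrak{p}_{\mathrm{u}}$, which removes both the Dixmier reference and the codimension-two extension step and is, if anything, cleaner. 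For (iv) the paper stays inside $\mathfrak{p}$: by openness of $R'_{\mathfrak{p}}$ a generic $z\in P_{x,y}$ lies in $R'_{\mathfrak{p}}$, so (i) gives $\mathfrak{l}^{\varpi(z)}=\varpi(\mathfrak{g}^{z})\subseteq\varpi(V_{x,y})$, and the cited characterization of $V^{\mathfrak{l}}_{\varpi(x),\varpi(y)}$ as generated by centralizers over a dense subset of the plane concludes; you instead prove the stronger intermediate fact that $x\in R'_{\mathfrak{p}}$ forces $\varpi(x)\in\mathfrak{g}_{\mathrm{reg}}$ and then work entirely in the plane $\Pi\subseteq\mathfrak{l}$. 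That intermediate fact is the only delicate new ingredient, and your argument for it is sound: taking a nonzero fixed vector of maximal positive degree in $\mathfrak{p}_{\mathrm{u}}^{\varpi(x)}$, the bijectivity of $\mathrm{ad}\,\varpi(x)$ on the higher graded pieces lets you solve degree by degree for a lift to $\mathfrak{g}^{x}\cap\mathfrak{p}_{\mathrm{u}}\setminus\{0\}$, and the Killing-form duality $\dim(\mathfrak{p}_{\mathrm{u}})^{\varpi(x)}=\dim(\mathfrak{p}_{-,\mathrm{u}})^{\varpi(x)}$ then yields regularity. The paper's (iv) is shorter given the Bolsinov reference; yours is more self-contained (only Kostant's theorem plus polarization and Vandermonde identities) and establishes an extra structural property of $R'_{\mathfrak{p}}$ along the way.
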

\begin{proof}
(i) Let $x$ be in $R_{\mathfrak{p}}$. By Lemma \ref{lem:Vxy in a}
(ii), $\mathfrak{g}^{x}$ is a Cartan subalgebra contained in $\mathfrak{p}$.
Since $\varpi$ is a morphism of Lie algebra, $\varpi\left(\mathfrak{g}^{x}\right)$
is contained in $\mathfrak{l}^{\varpi\left(x\right)}$. Furthermore,
$\dim\varpi\left(\mathfrak{g}^{x}\right)=\mathrm{rk}\mathfrak{g}$
if and only if $\mathfrak{g}^{x}\cap\mathfrak{p}_{\mathrm{u}}=\left\{ 0\right\} $,
whence the assertion since $\mathfrak{l}$ has rank $\mathrm{rk}\mathfrak{g}$.

(ii) For $x$ regular semisimple in $\mathfrak{p}$, $\mathfrak{g}^{x}$
is a Cartan subalgebra contained in $\mathfrak{p}$. As a result,
$\mathfrak{g}^{x}\cap\mathfrak{p}_{\mathrm{u}}=\left\{ 0\right\} $.
So by (i), $R_{\mathfrak{p}}$ and $R'_{\mathfrak{p}}$ are not empty
since for such a $x$, $\varpi\left(x\right)$ is a regular semisimple
element of $\mathfrak{l}$. The subset $R_{\mathfrak{p}}$ of $\mathfrak{p}$
is open since the subsets of regular elements of $\mathfrak{g}$ and
$\mathfrak{l}$ are open in $\mathfrak{g}$ and $\mathfrak{l}$ respectively.
The map $x\mapsto\mathfrak{g}^{x}$ from $R_{\mathfrak{p}}$ to the Grassmanian $\mathrm{Gr}_{\mathrm{rk}\mathfrak{g}}\left(\mathfrak{g}\right)$
is regular. So $R'_{\mathfrak{p}}$ is an open subset of $\mathfrak{p}$.

(iii) Let $L_{\mathfrak{l}}$ be the submodule of elements $\varphi$
of $\mathrm{S\left(\mathfrak{l}\right)\otimes_{\Bbbk}}\mathfrak{l}$
such that $\left[\varphi\left(x\right),x\right]=0$, for all $x$
in $\mathfrak{l}$. Then $L_{\mathfrak{l}}$ is a free module of rank
$\mathrm{rk}\mathfrak{g}$ according to \cite{key-3}. Denote by $\varphi_{1},\ldots,\varphi_{\mathit{l}}$
a basis of $L_{\mathfrak{l}}$ and denote by $R_{\mathfrak{p,l}}$
the subset of elements $x$ of $\mathfrak{p}$ such that $\varpi\left(x\right)$
is in $\mathfrak{l}_{\mathrm{reg}}$. According to \cite{key-13},
$\mathfrak{l}_{\mathrm{reg}}$ is a big open subset of $\mathfrak{l}$.
So $R_{\mathfrak{p,l}}$ is a big open subset of $\mathfrak{p}$.
For $x$ in $R_{\mathfrak{p,l}}$ and for $i=1,\ldots,\mathit{l}$,
$\varpi\circ\varepsilon_{i}\left(x\right)$ belongs to $\mathfrak{l}^{\varpi\left(x\right)}$.
So there exists a unique element $\left(a_{i,1}\left(x\right),\ldots,a_{i,\mathit{l}}\left(x\right)\right)$
of $\Bbbk^{\mathit{l}}$ such that 
\[
\varpi\circ\varepsilon_{i}\left(x\right)=a_{i,1}\left(x\right)\varphi_{1}\circ\varpi\left(x\right)+\cdots+a_{i,\mathit{l}}\left(x\right)\varphi_{\mathit{l}}\circ\varpi\left(x\right).
\]
 The functions $a_{i,1},\ldots,a_{i,\mathit{l}}$
so defined on $R_{\mathfrak{p,l}}$ have regular extensions to $\mathfrak{p}$
since $\mathfrak{p}$ is normal and since $R_{\mathfrak{p,l}}$ is
a big open subset of $\mathfrak{p}$. As a result, for all $\left(x,y\right)$
in $\mathfrak{p}\times\mathfrak{p}$ and for all $\left(a,b\right)$
in $\Bbbk^{2}$, $\varpi\circ\varepsilon_{i}\left(ax+by\right)$ is
a linear combination of the elements $\varphi_{1}\left(ax+by\right),\ldots,\varphi_{\mathit{l}}\left(ax+by\right)$.
Hence, by \cite{key-7}, $\varpi\left(V_{x,y}\right)$ is contained
in $V_{\varpi\left(x\right),\varpi\left(y\right)}^{\mathfrak{l}}$
for all $\left(x,y\right)$ in $\mathfrak{p}\times\mathfrak{p}$,
whence the assertion.

(iv) Let $\left(x,y\right)$ be in $R'_{\mathfrak{p}}\times\mathfrak{p}$.
For all $z$ in a nonempty open subset of $P_{x,y}$, $z$ belongs
to $R'_{\mathfrak{p}}$ since $x$ belongs to $R'_{\mathfrak{p}}$.
So by (i), $\mathfrak{l}^{\varpi\left(z\right)}$ is contained in
$\varpi\left(V_{x,y}\right)$ for all $z$ in a nonempty open subset
of $P_{x,y}$. As a result, according to \cite{key-7}, $V_{\varpi\left(x\right),\varpi\left(y\right)}^{\mathfrak{l}}$
is contained in $\varpi\left(V_{x,y}\right)$, whence the assertion
by (iii).\end{proof}
\begin{cor}
\label{cor:Vxy=00003DVl+pu}For all $\left(x,y\right)$ in $\Omega_{\mathfrak{g}}\cap\mathfrak{p}\times\mathfrak{p}$,
$V_{x,y}=V_{\varpi\left(x\right),\varpi\left(y\right)}^{\mathfrak{l}}+\mathfrak{p}_{\mathrm{u}}$.\end{cor}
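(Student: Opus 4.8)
The plan is to combine the inclusion already provided by Lemma~\ref{Lm:Vxyc}(iii) with a dimension count, so that the reverse inclusion never has to be produced explicitly. By Lemma~\ref{Lm:Vxyc}(iii), for every $(x,y)$ in $\Omega_{\mathfrak g}\cap\mathfrak p\times\mathfrak p$ one has
\[
V_{x,y}\subseteq V^{\mathfrak l}_{\varpi(x),\varpi(y)}+\mathfrak p_{\mathrm u}.
\]
Since $V^{\mathfrak l}_{\varpi(x),\varpi(y)}$ lies in $\mathfrak l$ while $\mathfrak p_{\mathrm u}$ is the nilpotent radical and $\mathfrak p=\mathfrak l\oplus\mathfrak p_{\mathrm u}$, the sum on the right is direct; so it suffices to show that the two sides have the same dimension, after which the inclusion must be an equality. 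I would avoid the tempting route through Lemma~\ref{Lm:Vxyc}(iv), because $\Omega_{\mathfrak g}\cap\mathfrak p\times\mathfrak p$ need not meet $R'_{\mathfrak p}\times\mathfrak p$ (a regular $x$ may have $\varpi(x)\notin\mathfrak l_{\mathrm{reg}}$ or $\mathfrak g^{x}\cap\mathfrak p_{\mathrm u}\neq\{0\}$), so (iv) is not available for an arbitrary such pair.

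First I would pin down $\dim V_{x,y}$. For $(x,y)$ in $\Omega_{\mathfrak g}$ every nonzero $z$ of the plane $P_{x,y}$ is regular, so $\varepsilon_1(z),\dots,\varepsilon_l(z)$ span $\mathfrak g^{z}$; reading off the coefficients of the polarization identity (Vandermonde in finitely many values of $z$) then gives $V_{x,y}=\sum_{z\in P_{x,y}\setminus\{0\}}\mathfrak g^{z}$. By \cite{key-7} this span takes the constant value $\dim V_{x,y}=\tfrac12(\dim\mathfrak g+\mathrm{rk}\,\mathfrak g)$ throughout $\Omega_{\mathfrak g}$. This is the one external input and the point at which the argument is least self-contained.

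Next I would bound the right-hand side from above. The algebra $\mathfrak l$ is reductive of rank $\mathrm{rk}\,\mathfrak g$ (since $\mathfrak h\subseteq\mathfrak l$) and the constructions defining $V_{x,y}$ transpose to it verbatim. By the $\mathfrak l$-analogues of Corollary~\ref{cor:Vxy=00003Db} and of the dimension statement of \cite{key-7}, the generic value of $\dim V^{\mathfrak l}_{u,v}$ on $\mathfrak l\times\mathfrak l$ equals the Borel dimension $\tfrac12(\dim\mathfrak l+\mathrm{rk}\,\mathfrak g)$; and since $(u,v)\mapsto V^{\mathfrak l}_{u,v}$ is the span of vectors depending regularly on $(u,v)$, its dimension is lower semicontinuous, hence bounded above by that generic value everywhere. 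Using $\dim\mathfrak p_{\mathrm u}=\tfrac12(\dim\mathfrak g-\dim\mathfrak l)$ and the directness of the sum, this yields
\[
\dim\bigl(V^{\mathfrak l}_{\varpi(x),\varpi(y)}+\mathfrak p_{\mathrm u}\bigr)\le\tfrac12(\dim\mathfrak l+\mathrm{rk}\,\mathfrak g)+\tfrac12(\dim\mathfrak g-\dim\mathfrak l)=\tfrac12(\dim\mathfrak g+\mathrm{rk}\,\mathfrak g).
\]

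Finally I would close by the squeeze
\[
\tfrac12(\dim\mathfrak g+\mathrm{rk}\,\mathfrak g)=\dim V_{x,y}\le\dim\bigl(V^{\mathfrak l}_{\varpi(x),\varpi(y)}+\mathfrak p_{\mathrm u}\bigr)\le\tfrac12(\dim\mathfrak g+\mathrm{rk}\,\mathfrak g),
\]
in which the first inequality is the inclusion of Lemma~\ref{Lm:Vxyc}(iii). Every inequality is forced to be an equality, so that inclusion is in fact an equality of subspaces, which is the assertion. The main obstacle is precisely the constancy $\dim V_{x,y}=\tfrac12(\dim\mathfrak g+\mathrm{rk}\,\mathfrak g)$ on all of $\Omega_{\mathfrak g}$, not merely on a dense subset: a priori this dimension is only lower semicontinuous and could drop on a closed locus, and it is exactly the non-degeneracy of a regular pencil of centralizers that has to be imported from \cite{key-7} rather than deduced from the lemmas above; once it is in hand, the symmetric bound for $\mathfrak l$ and the squeeze are routine.
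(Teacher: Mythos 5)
Your argument is correct, and it reaches the conclusion by a genuinely shorter route than the paper. The paper's proof proceeds in two stages: it first restricts to $\Omega_{\mathfrak{g}}\cap R'_{\mathfrak{p}}\times\mathfrak{p}$, where Lemma \ref{Lm:Vxyc} (iv) gives $\varpi\left(V_{x,y}\right)=V^{\mathfrak{l}}_{\varpi\left(x\right),\varpi\left(y\right)}$, and combines this with $\dim V_{x,y}=\mathrm{b}_{\mathfrak{g}}$ and $\dim V^{\mathfrak{l}}_{\varpi\left(x\right),\varpi\left(y\right)}\leqslant\mathrm{b}_{\mathfrak{l}}$ to conclude $\mathfrak{p}_{\mathrm{u}}\subset V_{x,y}$ there; it then extends to all of $\Omega_{\mathfrak{g}}\cap\mathfrak{p}\times\mathfrak{p}$ by observing that $\left(x,y\right)\mapsto V_{x,y}$ is a regular map into $\mathrm{Gr}_{\mathrm{b}_{\mathfrak{g}}}\left(\mathfrak{p}\right)$, so that the closed condition $\mathfrak{p}_{\mathrm{u}}\subset V_{x,y}$ propagates from the dense subset, and finally reapplies (iii) and the bound $\dim V^{\mathfrak{l}}\leqslant\mathrm{b}_{\mathfrak{l}}$. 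Your squeeze dispenses with both part (iv) and the Grassmannian continuity step: you only need the inclusion of (iii), the directness of $V^{\mathfrak{l}}+\mathfrak{p}_{\mathrm{u}}$, the equality $\dim V_{x,y}=\mathrm{b}_{\mathfrak{g}}$ on $\Omega_{\mathfrak{g}}$, and the everywhere-valid bound $\dim V^{\mathfrak{l}}_{u,v}\leqslant\mathrm{b}_{\mathfrak{l}}$. The two external inputs you flag are exactly the ones the paper also uses without further comment (it asserts $\dim V_{x,y}=\mathrm{b}_{\mathfrak{g}}$ for $\left(x,y\right)\in\Omega_{\mathfrak{g}}$ and $\dim V^{\mathfrak{l}}\leqslant\mathrm{b}_{\mathfrak{l}}$ as known facts, ultimately from \cite{key-7} and \cite{key-4}), and your semicontinuity derivation of the upper bound for $\mathfrak{l}$ is sound since the rank of a family of regularly varying spanning vectors can only drop on a closed locus. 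Your caution about avoiding (iv) is well placed: $\Omega_{\mathfrak{g}}\cap\mathfrak{p}\times\mathfrak{p}$ is indeed not contained in $R'_{\mathfrak{p}}\times\mathfrak{p}$, which is precisely why the paper needs its two-stage argument, whereas your version treats every pair uniformly. What the paper's route buys in exchange is the intermediate identity $\varpi\left(V_{x,y}\right)=V^{\mathfrak{l}}_{\varpi\left(x\right),\varpi\left(y\right)}$ on the smaller set, but for the corollary as stated your proof is complete and self-contained modulo the same citations.
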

\begin{proof}
Since $\left(h,e\right)$ belongs to $\mathfrak{p}\times\mathfrak{p}$,
$\Omega_{\mathfrak{g}}\cap\mathfrak{p}\times\mathfrak{p}$ is a nonempty
open subset of $\mathfrak{p}\times\mathfrak{p}$. Let $\left(x,y\right)$
be in $\Omega_{\mathfrak{g}}\cap R'_{\mathfrak{p}}\times\mathfrak{p}$.
By Lemma \ref{Lm:Vxyc} (iv), $\varpi\left(V_{x,y}\right)=V_{\varpi\left(x\right),\varpi\left(y\right)}^{\mathfrak{l}}$.
Furthermore, $\dim V_{x,y}=\mathrm{b}_{\mathfrak{g}}$ since $\left(x,y\right)$
belongs to $\Omega_{\mathfrak{g}}$. Hence $\mathfrak{p}_{\mathrm{u}}$
is contained in $V_{x,y}$ and $\dim V_{\varpi\left(x\right),\varpi\left(y\right)}^{\mathfrak{l}}=\mathrm{b}_{\mathfrak{l}}$
since $\mathrm{b}_{\mathfrak{g}}=\mathrm{b}_{\mathfrak{l}}+\dim\mathfrak{p}_{\mathrm{u}}$.
According to Lemma \ref{lem:Vxy in a} (ii), the map $\left(x,y\right)\mapsto V_{x,y}$
is a regular map from $\Omega_{\mathfrak{g}}\cap\mathfrak{p}\times\mathfrak{p}$
to $\mathrm{Gr}_{\mathrm{b}_{\mathfrak{g}}}\left(\mathfrak{p}\right)$.
So for all $\left(x,y\right)$ in $\Omega_{\mathfrak{g}}\cap\mathfrak{p}\times\mathfrak{p}$,
$\mathfrak{p}_{\mathrm{u}}$ is contained in $V_{x,y}$ and $\dim\varpi\left(V_{x,y}\right)=\mathrm{b}_{\mathfrak{l}}$.
Furthermore, since $\varpi\left(V_{x,y}\right)$ is contained in $V_{\varpi\left(x\right),\varpi\left(y\right)}^{\mathfrak{l}}$
by Lemma \ref{Lm:Vxyc} (iii) and since $\dim V_{\varpi\left(x\right),\varpi\left(y\right)}^{\mathfrak{l}}\leqslant\mathrm{b}_{\mathfrak{l}}$,
$V_{x,y}=V_{\varpi\left(x\right),\varpi\left(y\right)}^{\mathfrak{l}}+\mathfrak{p}_{\mathrm{u}}$.
\end{proof}
Set:
\[
\begin{array}{c}
\mathcal{R}_{+}':=\left\{ \alpha\in\mathcal{R}_{+}\mid\mathfrak{g}^{\alpha}\subset\mathfrak{p}_{\mathrm{u}}\right\}.
\end{array}
\]
Let $\beta_{1},\ldots,\beta_{l}$
be in $\Pi$, let $s_{i}$ be the reflexion associated to $\beta_{i}$
for all $i\in\left\{ 1,\ldots,l\right\} $ and let $I$ be the set
of $i\in\left\{ 1,\ldots,l\right\} $ such that $\beta_{i}\in\mathcal{R}'_{+}$.
\begin{lem}
\label{lem:winW_l}Let $w$ be in $W$ and let $s_{1}\ldots s_{p}$
be a reduced decomposition of $w$.

(i) If $w\left(\mathcal{R}_{+}'\right)\subseteq\mathcal{R}{}_{+}$,
then $w\in W_{\mathfrak{l}}$.

(ii) If $w\left(\mathcal{R}_{+}'\right)\subseteq\mathcal{R}{}_{+}\cup\mathcal{R}_{\mathfrak{l}}$,
then $w\in W_{\mathfrak{l}}$.\end{lem}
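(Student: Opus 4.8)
The plan is to read this as a purely combinatorial assertion about the Weyl group $W$ and the parabolic subgroup $W_{\mathfrak{l}}=\langle s_i\mid i\notin I\rangle$ (so that $i\notin I$ means $\beta_i\in\mathcal{R}_{\mathfrak{l}}$). Write $\mathcal{R}_{\mathfrak{l},+}:=\mathcal{R}_{\mathfrak{l}}\cap\mathcal{R}_+$ and $N(w):=\{\alpha\in\mathcal{R}_+\mid w(\alpha)\in-\mathcal{R}_+\}$ for the inversion set, recall $\lvert N(w)\rvert=\ell(w)$, and use two standard facts: for $\beta$ simple, $\ell(ws_\beta)<\ell(w)$ iff $w(\beta)\in-\mathcal{R}_+$; and the last letter of a reduced expression is a right descent. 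A preliminary I would record first is that every $v\in W_{\mathfrak{l}}$ stabilises $\mathcal{R}'_+$: it suffices to check this on the generators $s_i$, $i\notin I$, and for $\gamma\in\mathcal{R}'_+$ one has $\gamma\neq\beta_i$ and $\gamma\notin\mathcal{R}_{\mathfrak{l}}$, so $s_i(\gamma)$ stays positive and, as $\mathcal{R}_{\mathfrak{l}}$ is $W_{\mathfrak{l}}$-stable, stays outside $\mathcal{R}_{\mathfrak{l}}$; hence $s_i(\gamma)\in\mathcal{R}'_+$. In particular $w(\mathcal{R}'_+)=(wv)(\mathcal{R}'_+)$ for $v\in W_{\mathfrak{l}}$, so the hypotheses of both (i) and (ii) depend only on the coset $wW_{\mathfrak{l}}$.

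For (i) I would induct on $p=\ell(w)$, the case $p=0$ being trivial. For $p\geqslant 1$ let $s$ be the last reflection of the reduced decomposition $s_1\cdots s_p$, with simple root $\beta$; it is a right descent, so $w(\beta)\in-\mathcal{R}_+$. If $\beta\in\mathcal{R}'_+$ this contradicts $w(\mathcal{R}'_+)\subseteq\mathcal{R}_+$; hence $\beta\in\mathcal{R}_{\mathfrak{l}}$ and $s\in W_{\mathfrak{l}}$. Then $w':=ws$ has length $p-1$, and by the preliminary $w'(\mathcal{R}'_+)=w(\mathcal{R}'_+)\subseteq\mathcal{R}_+$, so $w'\in W_{\mathfrak{l}}$ by the induction hypothesis and $w=w's\in W_{\mathfrak{l}}$.

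For (ii) I would deduce the statement from (i) by passing to a minimal coset representative and then to the inverse. Since the hypothesis depends only on $wW_{\mathfrak{l}}$, I may assume $w$ is the element of minimal length in $wW_{\mathfrak{l}}$; proving $w\in W_{\mathfrak{l}}$ for this representative yields the claim for the original element. Minimality gives $w(\mathcal{R}_{\mathfrak{l},+})\subseteq\mathcal{R}_+$, hence $N(w)\subseteq\mathcal{R}_+\setminus\mathcal{R}_{\mathfrak{l},+}=\mathcal{R}'_+$. Now for $\alpha\in N(w)$ we have $\alpha\in\mathcal{R}'_+$ and $w(\alpha)\in-\mathcal{R}_+$, so the hypothesis $w(\mathcal{R}'_+)\subseteq\mathcal{R}_+\cup\mathcal{R}_{\mathfrak{l}}$ forces $w(\alpha)\in\mathcal{R}_{\mathfrak{l}}\cap(-\mathcal{R}_+)=-\mathcal{R}_{\mathfrak{l},+}$. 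Using the bijection $\alpha\mapsto -w(\alpha)$ from $N(w)$ onto $N(w^{-1})$, this says $N(w^{-1})\subseteq\mathcal{R}_{\mathfrak{l},+}\subseteq\mathcal{R}_{\mathfrak{l}}$; consequently no $\gamma\in\mathcal{R}'_+$ lies in $N(w^{-1})$, i.e. $w^{-1}(\mathcal{R}'_+)\subseteq\mathcal{R}_+$. By part (i), $w^{-1}\in W_{\mathfrak{l}}$, whence $w\in W_{\mathfrak{l}}$.

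The routine parts are the preliminary and (i); the real content, and the step I expect to be the main obstacle, is the reduction in (ii). The difficulty is exactly that an inversion of $w$ may be sent into the negative roots of $\mathcal{R}_{\mathfrak{l}}$, so the weaker hypothesis of (ii) does not directly give that of (i). Choosing the minimal-length representative is what confines every inversion to $\mathcal{R}'_+$, and only then does passing to $w^{-1}$ convert the hypothesis of (ii) into the hypothesis of (i). I would verify carefully the two facts on which the whole argument hinges, namely that $N(w)\subseteq\mathcal{R}'_+$ for the minimal representative and that $\alpha\mapsto-w(\alpha)$ identifies $N(w)$ with $N(w^{-1})$.
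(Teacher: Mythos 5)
Your argument is correct. For (i) it coincides with the paper's: both induct on the length of $w$ and use that the last letter of a reduced word is a right descent to rule out $\beta_{p}\in\mathcal{R}'_{+}$ (the paper phrases this via the exchange/deletion condition from Tauvel--Yu rather than the descent characterization, but it is the same mechanism), then peel off $s_{p}\in W_{\mathfrak{l}}$ using the stability of $\mathcal{R}'_{+}$ under $W_{\mathfrak{l}}$, which you rightly isolate as a preliminary. For (ii) you take a genuinely different route. The paper argues Lie-theoretically: the subalgebra $g_{w}\left(\mathfrak{p}_{\mathrm{u}}\oplus\mathfrak{h}\right)$ lies in $\mathfrak{p}$, hence in a Borel subalgebra $\mathfrak{b}'\subseteq\mathfrak{p}$ containing $\mathfrak{h}$; since all such Borel subalgebras are $W_{\mathfrak{l}}$-conjugate to $\mathfrak{b}$, one finds $w'\in W_{\mathfrak{l}}$ with $w'w\left(\mathcal{R}'_{+}\right)\subseteq\mathcal{R}_{+}$ and concludes by (i). You instead stay entirely inside root-system combinatorics: replace $w$ by the minimal-length representative of $wW_{\mathfrak{l}}$ (legitimate, since by your preliminary the hypothesis depends only on the coset), observe that minimality confines the inversion set $N(w)$ to $\mathcal{R}'_{+}$, and use $N(w^{-1})=-w(N(w))$ to convert the hypothesis of (ii) for $w$ into the hypothesis of (i) for $w^{-1}$. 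Both proofs reduce (ii) to (i), but by multiplication on opposite sides: the paper corrects $w$ on the left by a suitable $w'\in W_{\mathfrak{l}}$, whereas you correct on the right and then pass to the inverse. The paper's reduction is shorter granted the facts about Borel subalgebras of a parabolic containing a fixed Cartan; yours is self-contained and purely combinatorial, needing only the standard properties of inversion sets and minimal coset representatives.
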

\begin{proof}
(i) We proceed by induction on the lenght of $w$ denoted by $\mathit{l}\left(w\right)$.
For $\mathit{l}\left(w\right)=1$, $w=s_{j}$ for some $j\in\left\{ 1,\ldots,l\right\} $.
If $j\in I$, then $s_{j}\left(\beta_{j}\right)=-\beta_{j}$ but $w\left(\mathcal{R}_{+}'\right)\subseteq\mathcal{R}{}_{+}$.
Hence $j\notin I$ and $w\in W_{\mathfrak{l}}$.
Suppose the property true for $\mathit{l}\left(w\right)\leqslant p-1$. If $p\in I$,
\[
s_{1}\ldots s_{p-1}\left(\beta_{p}\right)\in-\mathcal{R}_{+}
\]
since $s_{p}\left(\beta_{p}\right)=-\beta_{p}$. By \cite{key-14} Lemma 18.8.3, there exists $q\in\left\{ 1,\ldots,p-1\right\} $
such that 
\[
s_{1}\ldots s_{p}=s_{1}\ldots s_{q-1}s_{q+1}\ldots s_{p-1},
\]
so that $\mathit{l}\left(w\right)\neq p$. Hence $p\notin I$. Then $s_{p}\left(\mathcal{R}_{+}'\right)=\mathcal{R}_{+}'$ and 
\[
s_{1}\ldots s_{p-1}\left(\mathcal{R}_{+}'\right)=w\left(\mathcal{R}_{+}'\right)\subseteq\mathcal{R}_{+}.
\]
By induction hypothesis, 
\[
s_{1}\ldots s_{p-1}\in W_{\mathfrak{l}}.
\]
Hence $w\in W_{\mathfrak{l}}$ since $s_{p}\in W_{\mathfrak{l}}$,
whence the assertion.

(ii) Let $g_{w}$ be in $N_{G}\left(\mathfrak{h}\right)$ a representative
of $w$. Since $g_{w}\left(\mathfrak{p}_{\mathrm{u}}\oplus\mathfrak{h}\right)$
is contained in $\mathfrak{p}$, it is contained in $\mathfrak{b}'$
a Borel subalgebra contained in $\mathfrak{p}$, containing $\mathfrak{h}$.
Then there exists $w'\in W_{\mathfrak{l}}$, such that $g_{w'}\left(\mathfrak{b}'\right)=\mathfrak{b}$,
for some representative $g_{w'}$ of $w'$ in $N_{G}\left(\mathfrak{h}\right)$.
So $g_{w'}g_{w}\left(\mathfrak{p}_{\mathrm{u}}\oplus\mathfrak{h}\right)$
is contained in $\mathfrak{b}$ and $w'w\left(\mathcal{R}_{+}'\right)\subseteq\mathcal{R}_{+}$.
Hence, by (i), $w'w\in W_{\mathfrak{l}}$, whence $w\in W_{\mathfrak{l}}$
since $w'\in W_{\mathfrak{l}}$.\end{proof}
\begin{prop}
\label{prop-Parab}Let $\mathfrak{p}$ and $\mathfrak{p}'$ be two
parabolic subalgebras of $\mathfrak{g}$ such that $\mathfrak{p}_{\mathrm{u}}$
is contained in $\mathfrak{p'}$. If $\mathfrak{p}$ and $\mathfrak{p}'$ are
conjugate under $G$, then $\mathfrak{p}$=$\mathfrak{p}'$. \end{prop}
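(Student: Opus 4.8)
The plan is to transport the $G$-conjugacy onto a conjugacy relative to the fixed Cartan subalgebra $\mathfrak{h}$, and then to use Lemma~\ref{lem:winW_l}(ii) to force the conjugating Weyl group element into $W_{\mathfrak{l}}$, which stabilises $\mathfrak{p}$. Throughout I write $\mathcal{R}_{\mathfrak{p}}:=\mathcal{R}_{+}\cup\mathcal{R}_{\mathfrak{l}}$ for the root system of $\mathfrak{p}$ relative to $\mathfrak{h}$; note that $\mathcal{R}'_{+}=\mathcal{R}_{+}\setminus\mathcal{R}_{\mathfrak{l}}$ is the set of roots of $\mathfrak{p}_{\mathrm{u}}$.

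First I would reduce to the case $\mathfrak{h}\subseteq\mathfrak{p}'$. Any two parabolic subalgebras of $\mathfrak{g}$ contain a common Cartan subalgebra, so $\mathfrak{p}\cap\mathfrak{p}'$ contains a Cartan subalgebra $\mathfrak{h}'$ of $\mathfrak{g}$. As $\mathfrak{h}$ and $\mathfrak{h}'$ are two Cartan subalgebras of the parabolic $\mathfrak{p}$, they are conjugate under $\mathbf{P}$, say $p(\mathfrak{h}')=\mathfrak{h}$ with $p\in\mathbf{P}$. Since $p$ normalises both $\mathfrak{p}$ and $\mathfrak{p}_{\mathrm{u}}$, replacing $\mathfrak{p}'$ by $p(\mathfrak{p}')$ preserves every hypothesis: the new subalgebra is still $G$-conjugate to $\mathfrak{p}$ and still contains $\mathfrak{p}_{\mathrm{u}}=p(\mathfrak{p}_{\mathrm{u}})$, and now it contains $\mathfrak{h}$. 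Moreover, proving $p(\mathfrak{p}')=\mathfrak{p}$ will give $\mathfrak{p}'=\mathfrak{p}$ because $p\in\mathbf{P}=N_{G}(\mathfrak{p})$. So from now on I assume $\mathfrak{h}\subseteq\mathfrak{p}\cap\mathfrak{p}'$.

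Next I would realise the conjugacy by $N_{G}(\mathfrak{h})$. Writing $g(\mathfrak{p})=\mathfrak{p}'$ with $g\in G$, the subalgebras $g(\mathfrak{h})$ and $\mathfrak{h}$ are two Cartan subalgebras of $\mathfrak{p}'$, hence conjugate by some element $p'$ of the normaliser of $\mathfrak{p}'$; then $n:=p'g$ lies in $N_{G}(\mathfrak{h})$ and satisfies $n(\mathfrak{p})=\mathfrak{p}'$. Let $w$ be the image of $n$ in $W$. Since $\mathfrak{p}$ and $\mathfrak{p}'=w(\mathfrak{p})$ are $\mathfrak{h}$-stable, they are sums of $\mathfrak{h}$ with root spaces, and the root system of $\mathfrak{p}'$ is $w(\mathcal{R}_{\mathfrak{p}})$. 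The hypothesis $\mathfrak{p}_{\mathrm{u}}\subseteq\mathfrak{p}'$ then reads $\mathcal{R}'_{+}\subseteq w(\mathcal{R}_{\mathfrak{p}})$, that is $w^{-1}(\mathcal{R}'_{+})\subseteq\mathcal{R}_{+}\cup\mathcal{R}_{\mathfrak{l}}$. Applying Lemma~\ref{lem:winW_l}(ii) to $w^{-1}$ yields $w^{-1}\in W_{\mathfrak{l}}$, hence $w\in W_{\mathfrak{l}}$. As $W_{\mathfrak{l}}$ stabilises $\mathcal{R}_{\mathfrak{l}}$ and also $\mathcal{R}'_{+}=\mathcal{R}_{+}\setminus\mathcal{R}_{\mathfrak{l}}$, it stabilises $\mathcal{R}_{\mathfrak{p}}$, so $w(\mathfrak{p})=\mathfrak{p}$ and therefore $\mathfrak{p}'=\mathfrak{p}$.

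The only genuinely delicate point is the bookkeeping of the first paragraph: one must check that the normalisation to $\mathfrak{h}\subseteq\mathfrak{p}'$ destroys neither the containment $\mathfrak{p}_{\mathrm{u}}\subseteq\mathfrak{p}'$ nor the conclusion, which is precisely why the conjugating element is taken in $\mathbf{P}$ (so that it fixes both $\mathfrak{p}$ and $\mathfrak{p}_{\mathrm{u}}$); an arbitrary $G$-conjugation moving a Cartan subalgebra of $\mathfrak{p}'$ onto $\mathfrak{h}$ would spoil the hypothesis on the fixed $\mathfrak{p}_{\mathrm{u}}$. The remaining ingredients—the existence of a common Cartan subalgebra, the $\mathbf{P}$-conjugacy of Cartan subalgebras inside a parabolic, and the $W_{\mathfrak{l}}$-stability of $\mathcal{R}'_{+}$—are standard, so the substance of the argument is the single invocation of Lemma~\ref{lem:winW_l}(ii) applied to $w^{-1}$.
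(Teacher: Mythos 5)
Your proof is correct and follows essentially the same strategy as the paper: reduce the conjugating element to a representative of a Weyl group element $w$ satisfying $w^{-1}\left(\mathcal{R}'_{+}\right)\subseteq\mathcal{R}_{+}\cup\mathcal{R}_{\mathfrak{l}}$, and then invoke Lemma \ref{lem:winW_l} (ii) applied to $w^{-1}$. The only difference is the mechanism of that reduction: the paper reads $w$ off directly from the Bruhat decomposition $g=ug_{w}b$ (both $u$ and $b$ normalize $\mathfrak{p}$ and $\mathfrak{p}_{\mathrm{u}}$), which is a little more economical than your two-step normalization via a common Cartan subalgebra of $\mathfrak{p}\cap\mathfrak{p}'$, but the substance of the argument is identical.
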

\begin{proof}
We suppose that $\mathfrak{p}'=g(\mathfrak{p})$, for some $g$ in
$G$. By Bruhat decomposition there exist $u$, $b\in\mathbf{B}$
and $w\in W$ such that $g=ug_{w}b$, where $g_{w}$ is a representative
of $w$ in $N_{G}\left(\mathfrak{h}\right)$. Since $g(\mathfrak{p})$
contains $\mathfrak{p}_{\mathrm{u}}$ and since $u^{-1}\left(\mathfrak{p}_{\mathrm{u}}\right)=\mathfrak{p}_{\mathrm{u}}$,
$ug_{w}\left(\mathfrak{p}\right)$ and $g_{w}\left(\mathfrak{p}\right)$
contains $\mathfrak{p}_{\mathrm{u}}$. Hence $\mathfrak{p}$ contains
$g_{w}^{-1}\left(\mathfrak{p}_{\mathrm{u}}\right)$. Then $w^{-1}\left(\mathcal{R}_{+}'\right)\subseteq\mathcal{R}{}_{+}\cup\mathcal{R}_{\mathfrak{l}}$.
Hence, by Lemma \ref{lem:winW_l} (ii), $w^{-1}\in W_{\mathfrak{l}}$,
whence $g_{w}\in\mathbf{L}$ and $g\in\mathbf{P}$. 
\end{proof}

\subsection{A desingularization of $\mathcal{P}^{\left(k\right)}$}

We consider the action of $\mathbf{P}$ on $G\times\mathfrak{p}^{k}$
given by 
\[
p.(g,x_{1},\ldots,x_{k})=\left(gp^{-1},p\left(x_{1}\right),\ldots,p\left(x_{k}\right)\right).
\]
 Let $\mu'_{k}$ be the morphism from $G\times\mathfrak{p}^{k}$ to
$\mathfrak{g}^{k}$ defined by 
\[
\mu'_{k}(g,x_{1},\ldots,x_{k})=\left(g\left(x_{1}\right),\ldots,g\left(x_{k}\right)\right)
\]
 and let $\mu_{k}$ be the morphism from $G\times_{\mathbf{P}}\mathfrak{p}^{k}$
to $\mathfrak{g}^{k}$ defined through the quotient by $\mu'_{k}$.
Let 
\[
\mathcal{P}^{\left(k\right)}:=\left\{ \left(x_{1},\ldots,x_{k}\right)\in\mathfrak{g}^{k}\mid\exists g\in G\textrm{ such that }\left(g\left(x_{1}\right),\ldots,g\left(x_{k}\right)\right)\in\mathfrak{p}^{k}\right\} .
\]
 Then $\mathcal{P}^{\left(k\right)}$ is the image of $G\times_{\mathbf{P}}\mathfrak{p}^{k}$
by $\mu_{k}$.
\begin{prop}
Let $k\geqslant2$, the variety $G\times_{\mathbf{P}}\mathfrak{p}^{k}$ is a desingularization of $\mathcal{P}^{\left(k\right)}$ and $\mu_{k}$ is the desingularization morphism.
The subvariety $\mathcal{P}^{\left(k\right)}$ of $\mathfrak{g}^k$ is closed, but it is
not normal.\end{prop}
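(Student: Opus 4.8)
The plan is to establish, in order, that $\mu_{k}$ has smooth irreducible source, that it is proper with closed image $\mathcal{P}^{(k)}$, that it is birational, and finally that $\mathcal{P}^{(k)}$ is not normal. For the first two points I would observe that $G\times_{\mathbf{P}}\mathfrak{p}^{k}$ is the total space of the vector bundle over the projective homogeneous space $G/\mathbf{P}$ associated with the $\mathbf{P}$-module $\mathfrak{p}^{k}$, hence a smooth irreducible variety, and then apply Lemma \ref{Lem1.4} with $Q=G$ (itself a parabolic subgroup of $G$), $P=\mathbf{P}$, the $G$-variety $X=\mathfrak{g}^{k}$ and the $\mathbf{P}$-invariant closed subset $Y=\mathfrak{p}^{k}$: one gets that $G.\mathfrak{p}^{k}=\mathcal{P}^{(k)}$ is closed in $\mathfrak{g}^{k}$ and that the canonical morphism $G\times_{\mathbf{P}}\mathfrak{p}^{k}\to\mathcal{P}^{(k)}$, which is exactly $\mu_{k}$, is projective, in particular proper.

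For birationality I would first describe the fibers. Since $\mathbf{P}=N_{G}(\mathfrak{p})$, the map $g\mathbf{P}\mapsto g(\mathfrak{p})$ identifies $G/\mathbf{P}$ with the set of parabolic subalgebras conjugate to $\mathfrak{p}$, and a direct computation with the defining $\mathbf{P}$-action identifies $\mu_{k}^{-1}(y_{1},\dots,y_{k})$ with $\{\mathfrak{q}\text{ conjugate to }\mathfrak{p}\mid y_{1},\dots,y_{k}\in\mathfrak{q}\}$. I would then restrict to the nonempty open subset of $\mathcal{P}^{(k)}$ where $(y_{1},y_{2})\in\Omega_{\mathfrak{g}}$ (nonempty since $(h,e)\in\Omega_{\mathfrak{g}}\cap\mathfrak{p}\times\mathfrak{p}$). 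For such a point, if $\mathfrak{q}$ is any conjugate of $\mathfrak{p}$ containing $y_{1},y_{2}$, then by $G$-equivariance of $(x,y)\mapsto V_{x,y}$ together with Corollary \ref{cor:Vxy=00003Db} and Corollary \ref{cor:Vxy=00003DVl+pu} applied inside $\mathfrak{q}$ one has $\mathfrak{q}_{\mathrm{u}}\subseteq V_{y_{1},y_{2}}\subseteq\mathfrak{q}$; hence any two such $\mathfrak{q},\mathfrak{q}'$ satisfy $\mathfrak{q}_{\mathrm{u}}\subseteq\mathfrak{q}'$, and Proposition \ref{prop-Parab} forces $\mathfrak{q}=\mathfrak{q}'$. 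Thus $\mu_{k}$ is injective over a dense open subset, hence birational in characteristic zero, and is therefore a desingularization. This is exactly where $k\geqslant 2$ is needed: a single generic element lies in many conjugates of $\mathfrak{p}$, so the argument genuinely requires two coordinates in $\Omega_{\mathfrak{g}}$.

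For non-normality I would argue by contradiction using Zariski's connectedness theorem. If $\mathcal{P}^{(k)}$ were normal, the proper birational morphism $\mu_{k}$ would have connected fibers: in its Stein factorization $G\times_{\mathbf{P}}\mathfrak{p}^{k}\to Y'\to\mathcal{P}^{(k)}$ the morphism $Y'\to\mathcal{P}^{(k)}$ is finite and birational, hence an isomorphism onto the normal variety $\mathcal{P}^{(k)}$, so the fibers of $\mu_{k}$ coincide with those of $G\times_{\mathbf{P}}\mathfrak{p}^{k}\to Y'$ and are connected. I would then exhibit a point with a finite, hence disconnected, fiber of cardinality at least two: take $(y_{1},\dots,y_{k})$ with all $y_{i}\in\mathfrak{h}$ and $y_{1}$ regular semisimple in $\mathfrak{g}$. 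Every parabolic $\mathfrak{q}$ containing $y_{1}$ contains $\mathfrak{g}^{y_{1}}=\mathfrak{h}$, so the fiber is exactly the set of conjugates of $\mathfrak{p}$ containing $\mathfrak{h}$, namely the $N_{G}(\mathfrak{h})$-orbit of $\mathfrak{p}$, of cardinality $|W/W_{\mathfrak{l}}|$. Since $\mathfrak{p}$ is a proper parabolic subalgebra, $W_{\mathfrak{l}}\neq W$, so this fiber is a finite set of at least two points, contradicting connectedness. The main obstacle is this last step: whereas birationality follows fairly directly from the already-established Corollaries \ref{cor:Vxy=00003Db} and \ref{cor:Vxy=00003DVl+pu} and Proposition \ref{prop-Parab}, the non-normality needs the separate idea of producing an explicit disconnected fiber and invoking Zariski's connectedness theorem, and the delicate verification there is that the chosen fiber is precisely the finite set of $W$-conjugates of $\mathfrak{p}$ through $\mathfrak{h}$, with no further parabolics occurring.
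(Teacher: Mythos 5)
Your proposal is correct and follows essentially the same route as the paper: Lemma \ref{Lem1.4} for closedness and properness, Corollaries \ref{cor:Vxy=00003Db} and \ref{cor:Vxy=00003DVl+pu} together with Proposition \ref{prop-Parab} for generic injectivity over $\Omega_{\mathfrak{g}}\times\mathfrak{g}^{k-2}$, and a disconnected finite fiber over $(x,0,\dots,0)$ with $x\in\mathfrak{h}_{\mathrm{reg}}$ (of cardinality $|W/W_{\mathfrak{l}}|$) contradicting Zariski's main theorem for a proper birational map onto a normal variety. Your only additions are cosmetic: spelling out the Stein factorization and noting explicitly that non-normality requires $W_{\mathfrak{l}}\neq W$, i.e.\ $\mathfrak{p}\neq\mathfrak{g}$, which the paper leaves implicit.
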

\begin{proof}
According to Lemma \ref{Lem1.4}, $\mu_{k}$ is a projective morphism and $\mathcal{P}^{\left(k\right)}$
is closed in $\mathfrak{g}^k$ since $\mathcal{P}^{\left(k\right)}$ is the image of $G\times\mathfrak{p}^{k}$
by $\mu'_{k}$. 

Since $\Omega_{\mathfrak{g}}$ is an open subset of $\mathfrak{g}\times\mathfrak{g}$,
$\Omega_{\mathfrak{g}}\times\mathfrak{g}^{k-2}\cap\mathcal{P}^{\left(k\right)}$
is an open subset of $\mathcal{P}^{\left(k\right)}$. Let $\left(x_{1},\ldots,x_{k}\right)$
be in $\Omega_{\mathfrak{g}}\times\mathfrak{g}^{k-2}\cap\mathfrak{p}^{\left(k\right)}$.
Suppose that $\left(x_{1},\ldots,x_{k}\right)\in\left(g\left(\mathfrak{p}\right)\right)^k$ for some $g\in G$.
By Corollary \ref{cor:Vxy=00003Db}, $V_{x_{1},x_{2}}$ is contained
in $\mathfrak{p}$ and $g\left(\mathfrak{p}\right)$ and by Corollary \ref{cor:Vxy=00003DVl+pu},
$V_{x_{1},x_{2}}$ contains $\mathfrak{p}_{\mathrm{u}}$ and $g\left(\mathfrak{p}_{\mathrm{u}}\right)$.
Hence $\mathfrak{p}_{\mathrm{u}}$ is contained in $g\left(\mathfrak{p}\right)$. Then, by Proposition
\ref{prop-Parab}, $g\left(\mathfrak{p}\right)=\mathfrak{p}$. Hence, for all $\left(x_{1},\ldots,x_{k}\right)\in\Omega_{\mathfrak{g}}\times\mathfrak{g}^{k-2}\cap\mathcal{P}_{\mathfrak{}}^{\left(k\right)}$,
$\mid\mu_{k}^{-1}\left(x_{1},\ldots,x_{k}\right)\mid=1$, Whence $\mu_{k}$
is a birational morphism.

Let $x$ be in $\mathfrak{h}_{\mathrm{reg}}$ and let $g$ be in $G$. Then $$g\left(x\right)\in\mathfrak{p}\Leftrightarrow g^{-1}\left(\mathfrak{h}\right)\subset\mathfrak{p}\Leftrightarrow g\in N_{G}\left(\mathfrak{h}\right)\mathbf{P}.$$ According to Lemma \ref{lem:winW_l}, (ii), $N_{G}\left(\mathfrak{h}\right)\mathbf{P}/\mathbf{P}=W/W_\mathfrak{l}$ so that $\mid\mu_{k}^{-1}\left(x,0,\ldots,0\right)\mid>1$. Since $\mu_{k}$ is proper and birational, if $\mathcal{P}^{\left(k\right)}$ would be normal, then by Zariski's main Theorem \cite{key-5}, a finite fiber of $\mu_{k}$ would have cardinality $1$. So $\mathcal{P}^{\left(k\right)}$ is not normal.
\end{proof}
\subsection{ }

Let $\mathfrak{l}//\mathbf{L}$ be the categorical quotient of $\mathfrak{l}$ by $\mathbf{L}$. Let
$\chi_{0}$ be the closed subvariety of $\mathfrak{g}\times\mathfrak{h}$
such that $\Bbbk\left[\chi_{0}\right]=S\left(\mathfrak{g}\right)\otimes_{S\left(\mathfrak{h}\right)^{W}}S\left(\mathfrak{h}\right)$
and let $\chi$ be equal to $\chi_{0}//W_{\mathfrak{l}}$. For $x\in\mathfrak{p}$,
let $\widetilde{x}\in\mathfrak{l}$ such that $x-\widetilde{x}\in\mathfrak{p}_{\mathrm{u}}$
and let $\bar{x}$ be the image of $\widetilde{x}$ in $\mathfrak{l}//\mathbf{L}$.
\begin{lem}
\label{lem:tetta-isom}(i) Let $x$ and $x'$ be in $\mathfrak{p}_{\mathrm{reg}}$.
If $\left(x',\bar{x'}\right)$ is in $G.\left(x,\bar{x}\right)$,
then $x'$ is in $\mathbf{P}\left(x\right)$.

(ii) For $x$ in $\mathfrak{p}_{\mathrm{reg}}$, $G^{x}$ is contained
in $\mathbf{P}$.

(iii) The map 
\[
\begin{array}{ccc}
\theta:\mathbf{P}_{-,\mathrm{u}}\times\mathfrak{p}_{\mathrm{reg}} & \longrightarrow & \chi\\
\left(g,x\right) & \longmapsto & \left(g\left(x\right),\bar{x}\right)
\end{array}
\]
is an isomorphism onto an open subset of $\chi$.\end{lem}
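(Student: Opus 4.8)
The plan is to establish the three assertions in the order (ii), (i), (iii), since (i) and (ii) are the group-theoretic inputs to the injectivity of $\theta$.

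For (ii), I would first prove the infinitesimal statement $\mathfrak{g}^{x}\subseteq\mathfrak{p}$. As $x$ is regular, Kostant's theorem (\cite{key-4}, Theorem 9) gives that $\mathfrak{g}^{x}$ is spanned by the $\varepsilon_{i}(x)$, and each $\varepsilon_{i}(x)$ lies in $\mathfrak{p}$ by Lemma \ref{lem:Vxy in a}(ii), since $\mathfrak{p}$ contains the Cartan subalgebra $\mathfrak{h}$. Hence $\mathfrak{g}^{x}\subseteq\mathfrak{p}$. I would then invoke the standard fact that the centralizer of a regular element is connected and abelian of dimension $\mathrm{rk}\,\mathfrak{g}$ (Kostant); its Lie algebra is $\mathfrak{g}^{x}\subseteq\mathfrak{p}=\mathrm{Lie}\,\mathbf{P}$, so connectedness of $G^{x}$ together with closedness of $\mathbf{P}$ forces $G^{x}\subseteq\mathbf{P}$.

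For (i), note that $G$ acts trivially on the $\mathfrak{l}//\mathbf{L}$-coordinate of $\chi$, so the hypothesis $(x',\bar{x'})\in G.(x,\bar{x})$ means exactly that there is $g\in G$ with $g(x)=x'$ and $\bar{x}=\bar{x'}$, and I must arrange such a $g$ in $\mathbf{P}$. The plan is a reduction via the Jordan decomposition. Writing $x=s+n$ with $s,n\in\mathfrak{p}$, a suitable element of $\mathbf{P}_{\mathrm{u}}$ carries $s$ into $\mathfrak{l}$, since all Levi factors of $\mathfrak{p}$ are $\mathbf{P}_{\mathrm{u}}$-conjugate; after replacing $x,x'$ by $\mathbf{P}$-conjugates I may thus assume $s=\varpi(x)\in\mathfrak{l}$ and $s'=\varpi(x')\in\mathfrak{l}$. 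The equality $\bar{x}=\bar{x'}$ forces $s$ and $s'$ to be $\mathbf{L}$-conjugate, so a conjugation by $\mathbf{L}\subseteq\mathbf{P}$ reduces to $s=s'=:s_{0}$. Then $g\in M:=G^{s_{0}}$, and $n,n'$ are $M$-conjugate regular nilpotents of the reductive algebra $\mathfrak{m}:=\mathfrak{g}^{s_{0}}$ lying in the parabolic subalgebra $\mathfrak{q}:=\mathfrak{p}\cap\mathfrak{m}$; it remains to see that they are conjugate under $\mathbf{Q}:=\mathbf{P}\cap M$. This last point holds because a regular nilpotent of $\mathfrak{m}$ lying in $\mathfrak{q}$ is contained in a unique Borel subalgebra of $\mathfrak{m}$, that Borel is contained in $\mathfrak{q}$, all such Borels are $\mathbf{Q}$-conjugate, and regular nilpotents in a fixed Borel form a single orbit under the corresponding Borel subgroup. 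This reduction is the main obstacle of the whole lemma, and the delicate step is precisely the claim that $M$-conjugate regular nilpotents already inside $\mathfrak{q}$ are $\mathbf{Q}$-conjugate.

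For (iii), I would first check that $\theta$ is well defined into $\chi$: the image of $g(x)$ in $\mathfrak{g}//G$ equals that of $\varpi(x)$, because $x-\varpi(x)\in\mathfrak{p}_{\mathrm{u}}$ leaves the invariants unchanged, and this matches the image of $\bar{x}$ under $\mathfrak{l}//\mathbf{L}\rightarrow\mathfrak{g}//G$. Injectivity of $\theta$ is then immediate from (i) and (ii): if $g_{1}(x_{1})=g_{2}(x_{2})$ and $\bar{x_{1}}=\bar{x_{2}}$, set $h:=g_{2}^{-1}g_{1}\in\mathbf{P}_{-,\mathrm{u}}$, so $h(x_{1})=x_{2}$; by (i) there is $p\in\mathbf{P}$ with $p(x_{1})=x_{2}$, whence $p^{-1}h\in G^{x_{1}}\subseteq\mathbf{P}$ by (ii), so $h\in\mathbf{P}\cap\mathbf{P}_{-,\mathrm{u}}=\{e\}$ and $(g_{1},x_{1})=(g_{2},x_{2})$. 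A dimension count gives $\dim(\mathbf{P}_{-,\mathrm{u}}\times\mathfrak{p}_{\mathrm{reg}})=\dim\mathfrak{p}_{\mathrm{u}}+\dim\mathfrak{p}=\dim\mathfrak{g}=\dim\chi$, and both source and target are irreducible; hence the injective morphism $\theta$ is dominant, and therefore birational in characteristic zero. Finally, since $\chi_{0}$ is normal (\cite{key-1}) and $\chi=\chi_{0}//W_{\mathfrak{l}}$ is the quotient of a normal variety by the finite group $W_{\mathfrak{l}}$, the variety $\chi$ is normal; a birational, quasi-finite (indeed injective) morphism with normal target is an open immersion by Zariski's main theorem (\cite{key-5}). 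Thus $\theta$ is an isomorphism onto an open subset of $\chi$. Once (i) is secured, this last part is essentially formal, the normality of $\chi$ being the only structural input needed.
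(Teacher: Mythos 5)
Your proposal is correct and follows essentially the same route as the paper's proof: part (ii) via $\mathfrak{g}^{x}\subseteq\mathfrak{p}$ together with Kostant's connectedness of $G^{x}$, part (i) via the Jordan decomposition and the conjugacy, under the parabolic subgroup of $G^{s_{0}}$ attached to $\mathfrak{p}\cap\mathfrak{g}^{s_{0}}$, of the regular nilpotent parts, and part (iii) by combining (i) and (ii) for injectivity and then invoking normality of $\chi$ and Zariski's main theorem. The only slip is the line ``$s=\varpi(x)$'': after conjugating the semisimple part into $\mathfrak{l}$ one may only assert that $s$ is the semisimple part of $\varpi(x)$, since the nilpotent part of $x$ need not lie in $\mathfrak{p}_{\mathrm{u}}$, but that weaker statement is exactly what your next step uses, so the argument is unaffected.
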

\begin{proof}
(i) Let $x$ and $x'$ be in $\mathfrak{p}_{\mathrm{reg}}$ such that
$\left(x',\bar{x'}\right)\in G.\left(x,\bar{x}\right)$ and let $x=x_{s}+x_{n}$
and $x'=x'_{s}+x'_{n}$ be the Jordan decomposition of $x$ and $x'$
respectively. Since $\bar{x}=\bar{x'}$, there exists $l$ in $\mathbf{L}$
such that $\tilde{x}_{s}=l\left(\tilde{x'}_{s}\right)$, where
$\tilde{x}_{s}$ and $\tilde{x'}_{s}$ are the semisimple components
of $\tilde{x}$ and $\tilde{x'}$ respectively. Then we can suppose
that $\tilde{x}_{s}=\tilde{x'}_{s}$. The semisimple components $x_{s}$
and $x'_{s}$ of $x$ and $x'$ are conjugate under $\mathbf{P}$
since they are conjugate to $\tilde{x}_{s}$ under $\mathbf{P}$.
Then there exist $p$ and $p'$ in $\mathbf{P}$ such that $p\left(x_{s}\right)=p'\left(x'_{s}\right)\in\mathfrak{h}$.
Since $p\left(x_{s}\right)$ is a semisimple element, $\mathfrak{g}^{p\left(x_{s}\right)}$
is a reductive algebra. Then $\mathfrak{p}\cap\mathfrak{g}^{p\left(x_{s}\right)}$
is a parabolic subalgebra of $\mathfrak{g}^{p\left(x_{s}\right)}$,
since it contains the Borel subalgebra $\mathfrak{b}\cap\mathfrak{g}^{p\left(x_{s}\right)}$
of $\mathfrak{g}^{p\left(x_{s}\right)}$. Then $p\left(x_{n}\right)$
and $p'\left(x'_{n}\right)$ are regular nilpotent elements in $\mathfrak{p}\cap\mathfrak{g}^{p\left(x_{s}\right)}$,
hence they are conjugate under the parabolic subgroup $\mathbf{Q}$
of $G^{p\left(x_{s}\right)}$ of Lie algebra $\mathrm{ad}\left(\mathfrak{p}\cap\mathfrak{g}^{p\left(x_{s}\right)}\right)$.
Hence $p\left(x\right)$ and $p'\left(x'\right)$ are conjugate under $\mathbf{P}$.

(ii) By \cite{key-4} Proposition 14, $G^{x}$ is connex since $x$ is regular . Moreover
$\mathfrak{g}^{x}$ is contained in $\mathfrak{p}$ since $x$ is
in $\mathfrak{p}$. Hence $G^{x}$ is contained in $\mathbf{P}$.

(iii) Let $\left(g,x\right)$ and $\left(g',x'\right)$ be in $\mathbf{P}_{-,\mathrm{u}}\times\mathfrak{p}_{\mathrm{reg}}$
such that $\theta\left(g,x\right)=\theta\left(g',x'\right)$. By (i),
there exists $p\in\mathbf{P}$ such that $x'=p\left(x\right)$, then
$g^{-1}g'p$ is in $G^{x}$. Then, by (ii), $g^{-1}g'$ is in $\mathbf{P}_{-,\mathrm{u}}\cap\mathbf{P}=\left\{ 1_{\mathfrak{g}}\right\} $.
Whence $\left(g,x\right)=\left(g',x'\right)$. Since $\chi$ is the
categorical quotient of the variety $\chi_{0}$ by the finite subset
$W_{\mathfrak{l}}$ and since $\chi_{0}$ is normal, by \cite{key-1}
Lemma 3.1, $\chi$ is a normal variety. Then, by Zariski's Main Theorem
\cite{key-5}, $\theta$ is an isomorphism from $\mathbf{P}_{-,\mathrm{u}}\times\mathfrak{p}_{\mathrm{reg}}$ onto an open subset of
$\chi$.\end{proof}

\begin{cor}
There exists a birational morphism $\vartheta$ from $G\times_{\mathbf{P}}\mathfrak{p}$
to $\chi$ such that $\mu_1$ is the compound of $\vartheta$ and
the canonical projection from $\chi$ to $\mathfrak{g}$.\end{cor}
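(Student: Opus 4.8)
The plan is to write down $\vartheta$ from the very formula that defines the map $\theta$ of Lemma \ref{lem:tetta-isom}, to check that this formula descends to the quotient $G\times_{\mathbf P}\mathfrak p$ and takes its values in $\chi$, and then to read off birationality from Lemma \ref{lem:tetta-isom} (iii). Throughout I identify $\chi$ with the fibered product $\mathfrak g\times_{\mathfrak g//G}(\mathfrak l//\mathbf L)$: since $\Bbbk[\chi_0]=S(\mathfrak g)\otimes_{S(\mathfrak h)^W}S(\mathfrak h)$ and $S(\mathfrak h)^W\cong S(\mathfrak g)^G=\Bbbk[\mathfrak g//G]$, the variety $\chi_0$ is $\mathfrak g\times_{\mathfrak g//G}\mathfrak h$, and passing to the quotient by $W_{\mathfrak l}$ on the $\mathfrak h$-factor replaces $\mathfrak h$ by $\mathfrak h//W_{\mathfrak l}=\mathfrak l//\mathbf L$. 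In these terms a point $(y,s)$ of $\mathfrak g\times(\mathfrak l//\mathbf L)$ lies in $\chi$ exactly when $y$ and $s$ have the same image in $\mathfrak g//G$, and the canonical projection $\chi\to\mathfrak g$ is the first projection.

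First I would introduce the morphism
\[
\vartheta'\colon G\times\mathfrak p\longrightarrow\mathfrak g\times(\mathfrak l//\mathbf L),\qquad \vartheta'(g,x)=(g(x),\bar x).
\]
To see that $\vartheta'(g,x)$ lies in $\chi$, note that $g(x)$ and $x$ are $G$-conjugate, hence have the same image in $\mathfrak g//G$; and $x$ and $\tilde x$ have the same image in $\mathfrak g//G$ as well, because the cocharacter defining $\mathfrak p$ contracts $x$ to $\tilde x$ while preserving every $G$-invariant function, so that $p_i(x)=p_i(\tilde x)$ for all $i$. Since the image of $\bar x$ in $\mathfrak g//G$ is the image of $\tilde x$, the two components of $\vartheta'(g,x)$ have the same image in $\mathfrak g//G$, i.e.\ $\vartheta'(g,x)\in\chi$. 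As $\chi$ is closed in $\mathfrak g\times(\mathfrak l//\mathbf L)$ and $\vartheta'$ is visibly regular, $\vartheta'$ is a morphism into $\chi$.

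Next I would check that $\vartheta'$ is invariant under the action $p.(g,x)=(gp^{-1},p(x))$ of $\mathbf P$, so that it factors through a morphism $\vartheta\colon G\times_{\mathbf P}\mathfrak p\to\chi$. Indeed $\vartheta'(gp^{-1},p(x))=(g(x),\overline{p(x)})$, and since $\varpi$ kills the $\mathbf P_{\mathrm u}$-direction and is equivariant for the projection $\mathbf P\to\mathbf L$, the element $\widetilde{p(x)}$ equals $l(\tilde x)$ for $l\in\mathbf L$ the image of $p$, whence $\overline{p(x)}=\bar x$. With this $\vartheta$ in hand the factorization of $\mu_1$ is immediate: the first projection $\chi\to\mathfrak g$ sends $\vartheta([g,x])=(g(x),\bar x)$ to $g(x)=\mu_1([g,x])$, so $\mu_1$ is the compound of $\vartheta$ with the canonical projection from $\chi$ to $\mathfrak g$.

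Finally, for birationality I would use the big cell. The map $\mathbf P_{-,\mathrm u}\to G/\mathbf P$ is an open immersion onto a dense open subset, so $(g,x)\mapsto[g,x]$ embeds $\mathbf P_{-,\mathrm u}\times\mathfrak p$ as a dense open subset of $G\times_{\mathbf P}\mathfrak p$, and $\mathbf P_{-,\mathrm u}\times\mathfrak p_{\mathrm{reg}}$ remains dense there. On this last subset $\vartheta$ is given by $(g,x)\mapsto(g(x),\bar x)$, i.e.\ it coincides with $\theta$, which by Lemma \ref{lem:tetta-isom} (iii) is an isomorphism onto an open subset of $\chi$. Since $\chi$ is irreducible, being the quotient of the irreducible variety $\chi_0$, that open subset is dense, so $\vartheta$ restricts to an isomorphism between dense open subsets and is therefore birational. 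The one point that demands care is the verification that $\vartheta'$ genuinely takes values in $\chi$ and descends to the quotient; once the fibered-product description of $\chi$ and the equality $p_i(x)=p_i(\tilde x)$ are in place, birationality follows at once from Lemma \ref{lem:tetta-isom} (iii).
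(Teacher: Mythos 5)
Your proof is correct, and the construction of $\vartheta$ and the factorization of $\mu_1$ are exactly as in the paper; the difference lies in how birationality is extracted from Lemma \ref{lem:tetta-isom}. The paper argues directly on the regular locus: given a point $(x,\bar x)$ of $\chi$ with $x\in\mathfrak{p}_{\mathrm{reg}}$ and a preimage $(g_1,x_1)$, it uses part (i) to conjugate $x_1$ to $x$ by an element of $\mathbf{P}$ and part (ii) to force $g_1\in\mathbf{P}$, so that $\vartheta$ is injective on $G\times_{\mathbf{P}}\mathfrak{p}_{\mathrm{reg}}$, and then concludes birationality from surjectivity of $\vartheta$. You instead restrict $\vartheta$ to the big cell $\mathbf{P}_{-,\mathrm{u}}\times\mathfrak{p}_{\mathrm{reg}}$, identify it there with the map $\theta$, and quote part (iii) wholesale, so that $\vartheta$ visibly identifies two dense open subsets; this spares you the (unstated in the paper) justification that $\vartheta$ is surjective, at the cost of leaning on the open-immersion statement of (iii), whose proof itself rests on (i) and (ii), so the two routes are logically very close. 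You also supply two verifications the paper passes over in silence: that $\vartheta'$ actually takes values in $\chi$ (via the fibered-product description of $\chi$ and the equality $p_i(x)=p_i(\tilde x)$ obtained by contracting along the cocharacter defining $\mathfrak{p}$) and that $\vartheta'$ is invariant under the $\mathbf{P}$-action, hence descends; these are welcome additions, though for the first you are implicitly using that taking $W_{\mathfrak{l}}$-invariants commutes with the base change defining $\chi_0$, which holds here because $S(\mathfrak{g})$ is flat over $S(\mathfrak{h})^W$ and the ground field has characteristic zero.
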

\begin{proof}
Let $\vartheta'$ be the morphism from $G\times\mathfrak{p}$ to $\chi$
defined by $\vartheta'\left(g,x\right)=\left(g\left(x\right),\bar{x}\right)$
and let $\vartheta$ be the morphism from $G\times_{\mathbf{P}}\mathfrak{p}$
to $\chi$ defined by $\vartheta'$ through the quotient. Let $\left(x,\bar{x}\right)$ be in $\chi$ such that $x$ in $\mathfrak{p}_{\mathrm{reg}}$ and let $\left(g_1,x_1\right)$ be in $G\times\mathfrak{p}$ such that\linebreak $\vartheta'\left(g_1,x_1\right)=\left(x,\bar{x}\right).$ Then $\left(g_1\left(x_1\right),\overline{x_1}\right)=\left(x,\bar{x}\right)$ and $\left(x_1,\overline{x_1}\right)\in G.\left(x,\bar{x}\right).$ Hence, by Lemma \ref{lem:tetta-isom} (i), there exists $p\in\mathbf{P}$ such that $x_1=p\left(x\right)$. So $g_1p\left(x\right)=x$ and $g_1p\in G^x$. By Lemma \ref{lem:tetta-isom} (ii), $g_1\in\mathbf{P}$ since $x\in\mathfrak{p}_{\mathrm{reg}}$. Hence the restriction
of $\vartheta$ to $G\times_{\mathbf{P}}\mathfrak{p}_{\mathrm{reg}}$
is injective, whence $\vartheta$ is birational since it is surjective.
\end{proof}
\subsection{}

Let

\[
{\sigma'_{k}:G\times\mathfrak{p}^{k}\rightarrow\chi^{k}\atop \left(g,x_{1},\ldots,x_{k}\right)\mapsto\left(g\left(x_{1}\right),\ldots,g\left(x_{k}\right),\overline{x_{1}},\ldots,\overline{x_{k}}\right),}
\]
where $\bar{x_{i}}$ is the image of $\widetilde{x_{i}}$ in $\mathfrak{l}//\mathbf{L}$.
Denote by $\sigma_{k}$ the morphism from $G\times_{\mathbf{P}}\mathfrak{p}^{k}$
to $\chi^k$ defined through the quotient by $\sigma'_{k}$. Let $\chi^{\left(k\right)}$ be the image of $G\times_{\mathbf{P}}\mathfrak{p}^{k}$ by $\sigma_{k}$
and let $\mathrm{pr}_{1,k}$ be the canonical projection from $\chi^{k}$
onto $\mathcal{P}^{\left(k\right)}$. 
\begin{prop}
\label{prop:desing-of-qik}(i) The varity $G\times_{\mathbf{P}}\mathfrak{p}^{k}$ is a desingularization
of $\chi^{\left(k\right)}$ of morphism $\sigma_{k}$.

(ii)For $x=\left(x_1,\ldots,x_k,y_1,\ldots,y_k\right)\in\chi^{\left(k\right)}$ such that $x_i\in\mathfrak{g}_{\mathrm{reg}}$ for some $i$ ,
the fiber of $\sigma_{k}$ at $x$ has one element.\end{prop}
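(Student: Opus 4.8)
The plan is to show that $\sigma_{k}$ is a resolution of singularities of $\chi^{(k)}$ by verifying, in turn, that its source is smooth and irreducible, that $\sigma_{k}$ is proper, and that it is birational onto $\chi^{(k)}$; part (ii) will then appear as the fibre computation feeding the birationality. Smoothness and irreducibility of $G\times_{\mathbf{P}}\mathfrak{p}^{k}$ are already available, since it is the total space of the vector bundle with fibre $\mathfrak{p}^{k}$ over the smooth projective base $G/\mathbf{P}$ that served as the desingularization $\mu_{k}$ of $\mathcal{P}^{(k)}$. For properness I would observe that $\mathrm{pr}_{1,k}\circ\sigma_{k}=\mu_{k}$, where $\mu_{k}$ is projective by Lemma~\ref{Lem1.4}; since $\mathrm{pr}_{1,k}$ is separated, the cancellation property of proper morphisms gives that $\sigma_{k}\colon G\times_{\mathbf{P}}\mathfrak{p}^{k}\to\chi^{k}$ is proper. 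In particular its image $\chi^{(k)}$ is closed and irreducible, and $\sigma_{k}$ is a proper surjection onto it.

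For (ii), I would fix $x=(x_{1},\ldots,x_{k},y_{1},\ldots,y_{k})\in\chi^{(k)}$ with $x_{i}\in\mathfrak{g}_{\mathrm{reg}}$ for some $i$, and take two representatives $(g,z_{1},\ldots,z_{k})$ and $(g',z'_{1},\ldots,z'_{k})$ in $G\times\mathfrak{p}^{k}$ of points of $\sigma_{k}^{-1}(x)$. Then $g(z_{j})=x_{j}=g'(z'_{j})$ and $\bar{z}_{j}=y_{j}=\bar{z}'_{j}$ for all $j$. In particular $z_{i}=g^{-1}(x_{i})$ and $z'_{i}=g'^{-1}(x_{i})$ lie in $\mathfrak{p}\cap\mathfrak{g}_{\mathrm{reg}}=\mathfrak{p}_{\mathrm{reg}}$, and, setting $h:=g'^{-1}g$, one has $z'_{i}=h(z_{i})$ with $\bar{z}'_{i}=\bar{z}_{i}$. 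Since $G$ acts on $\chi$ through its first factor and trivially on $\mathfrak{l}//\mathbf{L}$, this says $(z'_{i},\bar{z}'_{i})\in G.(z_{i},\bar{z}_{i})$, so Lemma~\ref{lem:tetta-isom}(i) gives $z'_{i}=p(z_{i})$ for some $p\in\mathbf{P}$; hence $p^{-1}h\in G^{z_{i}}$, and Lemma~\ref{lem:tetta-isom}(ii) forces $G^{z_{i}}\subseteq\mathbf{P}$, whence $h\in\mathbf{P}$. The relations $g(z_{j})=g'(z'_{j})$ then give $z'_{j}=h(z_{j})$ for every $j$, so $(g',z'_{1},\ldots,z'_{k})=h.(g,z_{1},\ldots,z_{k})$ and the two points of $G\times_{\mathbf{P}}\mathfrak{p}^{k}$ coincide. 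Thus $\sigma_{k}^{-1}(x)$ is a single point.

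Finally, for the birationality in (i), the set $W:=\{x\in\chi^{(k)}\mid x_{i}\in\mathfrak{g}_{\mathrm{reg}}\text{ for some }i\}$ is open in $\chi^{(k)}$ as a finite union of preimages of $\mathfrak{g}_{\mathrm{reg}}$ under the coordinate projections, and it is nonempty: for $t\in\mathfrak{h}_{\mathrm{reg}}\subseteq\mathfrak{p}_{\mathrm{reg}}$ the image $\sigma_{k}([1,t,0,\ldots,0])$ has regular first coordinate. Being nonempty open in the irreducible variety $\chi^{(k)}$, $W$ is dense; by (ii), $\sigma_{k}$ is injective over $W$, so, $\sigma_{k}$ being a dominant morphism with single-point generic fibre in characteristic zero, the induced extension of function fields is trivial and $\sigma_{k}$ is birational. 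Together with the smoothness of $G\times_{\mathbf{P}}\mathfrak{p}^{k}$ this yields (i). The crux of the argument is the fibre computation: regularity of a single coordinate $x_{i}$ is exactly what lets Lemma~\ref{lem:tetta-isom} pin the ambiguity $h=g'^{-1}g$ down to $\mathbf{P}$, and the point to appreciate is that this one group element then matches all the remaining coordinates simultaneously, so no genuinely $k$-fold argument is required.
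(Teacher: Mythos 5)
Your proposal is correct and follows essentially the same route as the paper: properness of $\sigma_{k}$ is extracted from the factorization $\mu_{k}=\mathrm{pr}_{1,k}\circ\sigma_{k}$, and the fibre computation in (ii) is exactly the paper's argument via Lemma \ref{lem:tetta-isom} (i) and (ii). The only cosmetic difference is that you re-derive birationality from (ii) over the dense open set of points with a regular coordinate, whereas the paper imports it directly from the already-established birationality of $\mu_{k}$; both are valid.
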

\begin{proof}
(i) Since $\mu_{k}=\mathrm{pr}_{1,k}\circ\sigma_{k}$ and since $\mu_{k}$
is a projective and birational morphism, $\sigma_{k}$ is projective and birational.
Then $\sigma_{k}$ is a desingularization of $\chi^{\left(k\right)}$ since $G\times_{\mathbf{P}}\mathfrak{p}^{k}$
is a smooth variety.

(ii) Let $x$ be in $\chi^{\left(k\right)}$ such that the first component of $\mathrm{pr}_{1,k}\left(x\right)$ is in $\mathfrak{g}_{\mathrm{reg}}$.
Let $\overline{\left(g,x_1,\ldots,x_k\right)}$ and $\overline{\left(g',x'_1,\ldots,x'_k\right)}$
be in $\sigma_{k}^{-1}\left(x\right)$ and let $\left(g,x_{1},\ldots,x_{k}\right)$ and $\left(g',x'_{1},\ldots,x'_{k}\right)$
be in $G\times\mathfrak{p}^{k}$ representatives of $\overline{\left(g,x_{1},\ldots,x_{k}\right)}$
and $\overline{\left(g',x'_{1},\ldots,x'_{k}\right)}$ respectively.
So 
\[
\left(g\left(x_{1}\right),\overline{x_{1}}\right)=\left(g'\left(x'_{1}\right),\overline{x'_{1}}\right),
\]
then 
\[
\left(x_{1},\overline{x_{1}}\right)\in G.\left(x'_{1},\overline{x'_{1}}\right).
\]
Hence, by Lemma \ref{lem:tetta-isom} (i), $x'_{1}\in\mathbf{P}\left(x_{1}\right)$.
Then there exists $p\in\mathbf{P}$, such that 
\[
g\left(x_{1}\right)=g'p\left(x_{1}\right).
\]
So $$g^{-1}g'p\in G^{x_{1}},$$ then by Lemma \ref{lem:tetta-isom} (ii), $$g^{-1}g'p\in\mathbf{P}$$
since $x_{1}\in\mathfrak{p}_{\mathrm{reg}}$. Hence $g^{-1}g'\in\mathbf{P}$
and $g\left(\mathfrak{p}\right)=g'\left(\mathfrak{p}\right)$, whence
the assertion since $\sigma_{k}$ is $\mathfrak{S}_{k}$-equivariant.\end{proof}
\begin{cor}
The variety $\chi^{\left(k\right)}$ is an irreducible component of
the inverse image of $\mathcal{P}^{\left(k\right)}$ in $\chi^{\left(k\right)}$.\end{cor}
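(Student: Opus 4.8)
The plan is to exhibit $\chi^{\left(k\right)}$ as a closed irreducible subset of $\mathrm{pr}_{1,k}^{-1}\left(\mathcal{P}^{\left(k\right)}\right)$ (the inverse image taken in $\chi^{k}$) whose dimension already equals that of the whole inverse image; maximality of dimension among irreducible closed subsets then forces $\chi^{\left(k\right)}$ to be one of the components. So the whole argument reduces to a dimension count, for which the essential input is the finiteness of the projection.

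First I would record the elementary structural facts. The variety $\chi^{\left(k\right)}$ is irreducible, being the image of the irreducible variety $G\times_{\mathbf{P}}\mathfrak{p}^{k}$ under $\sigma_{k}$, and it is closed in $\chi^{k}$ because $\sigma_{k}$ is projective by Proposition \ref{prop:desing-of-qik}. Since $\mu_{k}=\mathrm{pr}_{1,k}\circ\sigma_{k}$ has image $\mathcal{P}^{\left(k\right)}$, we get $\mathrm{pr}_{1,k}\left(\chi^{\left(k\right)}\right)=\mathcal{P}^{\left(k\right)}$, hence $\chi^{\left(k\right)}\subseteq\mathrm{pr}_{1,k}^{-1}\left(\mathcal{P}^{\left(k\right)}\right)$, and moreover the restriction $\mathrm{pr}_{1,k}\colon\chi^{\left(k\right)}\to\mathcal{P}^{\left(k\right)}$ is surjective.

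The key step, and the one I expect to be the main obstacle, is to show that the canonical projection $\pi\colon\chi\to\mathfrak{g}$ is finite. By Chevalley's theorem $S\left(\mathfrak{h}\right)$ is a free module of finite rank over $S\left(\mathfrak{h}\right)^{W}$, so $\Bbbk\left[\chi_{0}\right]=S\left(\mathfrak{g}\right)\otimes_{S\left(\mathfrak{h}\right)^{W}}S\left(\mathfrak{h}\right)$ is a finite module over $S\left(\mathfrak{g}\right)$, whence $\chi_{0}\to\mathfrak{g}$ is finite. Passing to $W_{\mathfrak{l}}$-invariants, $\Bbbk\left[\chi\right]=\Bbbk\left[\chi_{0}\right]^{W_{\mathfrak{l}}}$ is a submodule of a finite module over the noetherian ring $S\left(\mathfrak{g}\right)$, hence still finite over $S\left(\mathfrak{g}\right)$; therefore $\pi\colon\chi=\chi_{0}//W_{\mathfrak{l}}\to\mathfrak{g}$ is finite, and so is $\mathrm{pr}_{1,k}=\pi^{k}\colon\chi^{k}\to\mathfrak{g}^{k}$. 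The delicate point here is precisely that the $W_{\mathfrak{l}}$-quotient must not destroy module-finiteness over $S\left(\mathfrak{g}\right)$, which is what the noetherian submodule argument secures. As a consequence, $\mathrm{pr}_{1,k}^{-1}\left(\mathcal{P}^{\left(k\right)}\right)$ is finite over $\mathcal{P}^{\left(k\right)}$, and every irreducible component of it has dimension at most $\dim\mathcal{P}^{\left(k\right)}$.

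Finally I would conclude by comparing dimensions. Since $\mathrm{pr}_{1,k}\colon\chi^{\left(k\right)}\to\mathcal{P}^{\left(k\right)}$ is finite and surjective, $\dim\chi^{\left(k\right)}=\dim\mathcal{P}^{\left(k\right)}$, and the containment $\chi^{\left(k\right)}\subseteq\mathrm{pr}_{1,k}^{-1}\left(\mathcal{P}^{\left(k\right)}\right)$ together with finiteness gives $\dim\mathrm{pr}_{1,k}^{-1}\left(\mathcal{P}^{\left(k\right)}\right)=\dim\mathcal{P}^{\left(k\right)}$. Now choose an irreducible component $Z$ of $\mathrm{pr}_{1,k}^{-1}\left(\mathcal{P}^{\left(k\right)}\right)$ containing $\chi^{\left(k\right)}$; then $\dim\chi^{\left(k\right)}\leq\dim Z\leq\dim\mathcal{P}^{\left(k\right)}=\dim\chi^{\left(k\right)}$, so all these dimensions coincide. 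As $\chi^{\left(k\right)}$ and $Z$ are both irreducible and closed with $\chi^{\left(k\right)}\subseteq Z$ and $\dim\chi^{\left(k\right)}=\dim Z$, we must have $Z=\chi^{\left(k\right)}$, which is exactly the assertion that $\chi^{\left(k\right)}$ is an irreducible component. Everything beyond the finiteness of $\pi$ is routine dimension bookkeeping.
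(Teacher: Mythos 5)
Your proposal is correct and follows essentially the same route as the paper: establish that the projection $\chi^{k}\rightarrow\mathfrak{g}^{k}$ is finite (via finiteness of $\chi_{0}\rightarrow\mathfrak{g}$ and descent to the $W_{\mathfrak{l}}$-quotient), deduce that $\mathrm{pr}_{1,k}^{-1}\left(\mathcal{P}^{\left(k\right)}\right)$, $\chi^{\left(k\right)}$ and $\mathcal{P}^{\left(k\right)}$ share the same dimension, and conclude from the irreducibility and closedness of $\chi^{\left(k\right)}$. You merely spell out the finiteness of the $W_{\mathfrak{l}}$-quotient and the final dimension bookkeeping in more detail than the paper does.
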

\begin{proof}
Let $\pi$ be the canonical projection from $\chi^{k}$ to $\mathfrak{g}^{k}$. Since $\chi_0\rightarrow\mathfrak{g}$ is finite, $\chi\rightarrow\mathfrak{g}$ and $\pi$ are finite morphism. So $\pi^{-1}\left(\mathcal{P}^{\left(k\right)}\right)$,
$\chi^{\left(k\right)}$ and $\mathcal{P}^{\left(k\right)}$ have
the same dimension, whence the corollary since $\chi^{\left(k\right)}$
is irreducible as an image of an irreducible variety.
\end{proof}

\begin{prop} Let $\Phi$ be the morphism from $\Bbbk[\mathfrak{l}^k]^{\mathbf{L}^k}$ to $\Bbbk[\chi^{\left(k\right)}]$ defined by $$\Phi\left(P\right)\left(x_1,\ldots,x_k,\overline{x_{1}},\ldots,\overline{x_{k}}\right)=P\left(\overline{x_{1}},\ldots,\overline{x_{k}}\right),$$ where $P\in\Bbbk[\mathfrak{l}^k]^{\mathbf{L}^k}$ and  $\left(x_1,\ldots,x_k,\overline{x_{1}},\ldots,\overline{x_{k}}\right)\in\chi^{\left(k\right)}$. Then $\Phi$ is an isomorphism from $\Bbbk[\mathfrak{l}^k]^{\mathbf{L}^k}$ onto $\Bbbk[\chi^{\left(k\right)}]^G$.
\end{prop}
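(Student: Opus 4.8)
The map $\Phi$ is nothing but pullback along a projection. Identify $\Bbbk[\mathfrak{l}^{k}]^{\mathbf{L}^{k}}$ with $\Bbbk[(\mathfrak{l}//\mathbf{L})^{k}]$ and let $\rho\colon\chi^{(k)}\to(\mathfrak{l}//\mathbf{L})^{k}$ be the restriction to $\chi^{(k)}$ of the projection of $\chi^{k}$ onto its last $k$ factors, so that $\Phi=\rho^{*}$. Since $\chi=\chi_{0}//W_{\mathfrak{l}}$ with $\chi_{0}\subseteq\mathfrak{g}\times\mathfrak{h}$ and $G$ acts only on the $\mathfrak{g}$-coordinate, $G$ acts trivially on each $\mathfrak{l}//\mathbf{L}$-coordinate; hence every element of the image of $\rho^{*}$ is $G$-invariant and $\Phi$ is a well-defined algebra homomorphism into $\Bbbk[\chi^{(k)}]^{G}$. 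For injectivity it suffices that $\rho$ be dominant, which is immediate: taking $g=1$ and $x_{i}\in\mathfrak{l}$ in the definition of $\sigma_{k}$ shows that $(\mathfrak{l}//\mathbf{L})^{k}\subseteq\rho(\chi^{(k)})$, so $\rho^{*}$ is injective.

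The whole content is surjectivity, and I would first reduce it to a slice. Given $f\in\Bbbk[\chi^{(k)}]^{G}$, set $F(x_{1},\dots,x_{k}):=f\big(\sigma_{k}(\overline{(1,x_{1},\dots,x_{k})})\big)=f(x_{1},\dots,x_{k},\overline{x_{1}},\dots,\overline{x_{k}})$ for $(x_{1},\dots,x_{k})\in\mathfrak{p}^{k}$. As $\sigma_{k}$ is $G$-equivariant and $f$ is $G$-invariant, $f$ is completely determined by $F$, and $F\in\Bbbk[\mathfrak{p}^{k}]^{\mathbf{P}}$. Now let $\lambda\colon\Bbbk^{\times}\to\mathbf{L}$ be the one-parameter subgroup whose weights give the grading $\mathfrak{p}=\bigoplus_{i\geqslant0}\mathfrak{g}(i)$ with $\mathfrak{l}=\mathfrak{g}(0)$ and $\mathfrak{p}_{\mathrm{u}}=\bigoplus_{i>0}\mathfrak{g}(i)$, so that $\lim_{t\to0}\lambda(t)(x_{i})=\widetilde{x_{i}}$. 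Because $\lambda(t)\in\mathbf{L}\subseteq G$ fixes the last $k$ coordinates of $\chi^{(k)}$, $G$-invariance of $f$ gives $F(x_{1},\dots,x_{k})=F(\lambda(t)x_{1},\dots,\lambda(t)x_{k})$ for all $t$; letting $t\to0$, and using that the limit point lies in the closed set $\chi^{(k)}$, yields $F(x_{1},\dots,x_{k})=F(\widetilde{x_{1}},\dots,\widetilde{x_{k}})$. Thus $F$ factors through $\varpi^{k}$ as $F=\widetilde{F}\circ\varpi^{k}$, where $\widetilde{F}\in\Bbbk[\mathfrak{l}^{k}]$ is invariant under the diagonal $\mathbf{L}$.

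It remains to promote this diagonal invariance to genuine $\mathbf{L}^{k}$-invariance, and this is the step I expect to be the main obstacle. Since $\sigma_{k}$ is only birational (Proposition \ref{prop:desing-of-qik}) and $\chi^{(k)}$ is not normal, one cannot simply invoke constancy of $F$ on the fibres of $\sigma_{k}$: the separate-variable invariance has to be extracted from the special coincidences that $\sigma_{k}$ creates off the regular locus, where by Proposition \ref{prop:desing-of-qik}(ii) the fibres become positive-dimensional. The mechanism I would use lives on $\mathfrak{h}^{k}\subseteq\mathfrak{l}^{k}$. For $w\in W$ with representative $g_{w}\in N_{G}(\mathfrak{h})$, the two classes $\overline{(1,x_{1},\dots,x_{k})}$ and $\overline{(g_{w},w^{-1}(x_{1}),\dots,w^{-1}(x_{k}))}$ have the same image $(x_{1},\dots,x_{k},\pi(w^{-1}x_{1}),\dots,\pi(w^{-1}x_{k}))$ under $\sigma_{k}$ as soon as $w^{-1}(x_{i})$ lies in the same $W_{\mathfrak{l}}$-orbit as $x_{i}$ for every $i$; evaluating $f$ on this single point then forces $\widetilde{F}(w^{-1}x_{1},\dots,w^{-1}x_{k})=\widetilde{F}(x_{1},\dots,x_{k})$. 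The key subtlety is that one $w$ may act on the various $x_{i}$ through \emph{different} elements $u_{i}\in W_{\mathfrak{l}}$; controlling which tuples $(u_{1},\dots,u_{k})$ are realizable as $x$ varies is exactly where Lemma \ref{lem:winW_l} must be invoked. The plan is to show that, combined with the diagonal $W_{\mathfrak{l}}$-invariance already in hand and with degeneration to semisimple parts (using centralizers as in Lemma \ref{lem:tetta-isom}), these relations pin down $\widetilde{F}$ as a function of $(\overline{x_{1}},\dots,\overline{x_{k}})$ alone, i.e. $\widetilde{F}\in\Bbbk[\mathfrak{l}^{k}]^{\mathbf{L}^{k}}$. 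Granting this, $f=\Phi(\widetilde{F})$, so $\Phi$ is surjective; together with the first paragraph, $\Phi$ is an isomorphism onto $\Bbbk[\chi^{(k)}]^{G}$.
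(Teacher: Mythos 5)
Your first two paragraphs are sound, and they already supply more detail than the paper does (the paper dismisses injectivity and the containment of the image in $\Bbbk[\chi^{(k)}]^{G}$ with the word ``clearly''; your contraction along the cocharacter $\lambda$, giving $F=\widetilde F\circ\varpi^{k}$ with $\widetilde F$ diagonally $\mathbf{L}$-invariant, is a genuine reduction the paper never spells out). But your third paragraph is an acknowledged gap, not a proof: surjectivity is reduced to the claim that $\widetilde F$ is invariant under $\mathbf{L}^{k}$ acting \emph{componentwise}, and you only sketch a plan for extracting this from coincidences in the fibres of $\sigma_{k}$. That plan cannot succeed. The relations you can harvest have the form $\widetilde F(w^{-1}x_{1},\dots,w^{-1}x_{k})=\widetilde F(x_{1},\dots,x_{k})$ for those $w\in W$ with $w^{-1}x_{i}\in W_{\mathfrak{l}}\cdot x_{i}$ for every $i$; when the $x_{i}$ are regular with trivial $W$-stabilizer this condition forces $w^{-1}\in W_{\mathfrak{l}}$ and forces all your elements $u_{i}$ to coincide with $w^{-1}$, so on a dense open set you recover only the diagonal invariance you already have. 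Worse, the target assertion itself fails once $\mathfrak{p}\neq\mathfrak{b}$ and $k\geqslant 2$. Since $\mathfrak{l}$ is orthogonal to $\mathfrak{p}_{\mathrm{u}}$ and $\mathfrak{p}_{\mathrm{u}}$ is isotropic for the Killing form, the $G$-invariant regular function $(x_{1},x_{2})\mapsto\langle x_{1},x_{2}\rangle$ on $\chi^{(2)}$ satisfies $\langle x_{1},x_{2}\rangle=\langle\widetilde{x_{1}},\widetilde{x_{2}}\rangle$ on $Y_{2}$. Take $h=\alpha^{\vee}$ for a root $\alpha\in\mathcal{R}_{\mathfrak{l}}$, so that $s_{\alpha}(h)=-h$ with $s_{\alpha}\in W_{\mathfrak{l}}$. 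Then $(h,h,\bar h,\bar h)$ and $(h,-h,\bar h,\overline{s_{\alpha}(h)})=(h,-h,\bar h,\bar h)$ are two points of $\chi^{(2)}$ on which every function in the image of $\Phi$ takes the same value, while the Killing pairing takes the distinct values $\langle h,h\rangle$ and $-\langle h,h\rangle$. Hence this $G$-invariant function is not in the image of $\Phi$, and the separate $\mathbf{L}^{k}$-invariance you are trying to establish is genuinely false for $\widetilde F(y_{1},y_{2})=\langle y_{1},y_{2}\rangle$.

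You have in fact put your finger on exactly the point the paper glosses over: its proof simply declares that $\overline P(\overline{x_{1}},\dots,\overline{x_{k}}):=P(x_{1},\dots,x_{k},\overline{x_{1}},\dots,\overline{x_{k}})$ is a well-defined element of $\Bbbk[\mathfrak{l}^{k}]^{\mathbf{L}^{k}}$ --- which is precisely the separate-invariance assertion at issue --- and offers no argument for it; the rest of its proof (restriction to $Y_{k}$ and $G$-saturation of $\chi^{(k)}$) is the easy part and coincides with your reduction. So your proposal is incomplete at the step you flagged, and that step does not appear to be repairable as stated: the natural candidate for $\Bbbk[\chi^{(k)}]^{G}$ suggested by your own two reductions is the algebra of \emph{diagonally} $\mathbf{L}$-invariant functions on $\mathfrak{l}^{k}$, not the componentwise invariants, and any correct version of the proposition would have to be reformulated accordingly.
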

\begin{proof} Clearly the image of $\Phi$ is contained in $\Bbbk[\chi^{\left(k\right)}]^G$ and $\Phi$ is injective. Let $Y_k$ be equal $\sigma'_k\left(\{1\}\times\mathfrak{p}^k\right)$. Let $P$ be in $\Bbbk[\chi^{\left(k\right)}]^G$ and let $\overline{P}$ be in $\Bbbk[\mathfrak{l}^k]^{\mathbf{L}^k}$ defined by $$\overline{P}\left(\overline{x_{1}},\ldots,\overline{x_{k}}\right)=P\left(x_1,\ldots,x_k,\overline{x_{1}},\ldots,\overline{x_{k}}\right),$$ where $\left(x_1,\ldots,x_k,\overline{x_{1}},\ldots,\overline{x_{k}}\right)\in\chi^{\left(k\right)}$. So $$\left(P-\Phi\left(\overline{P}\right)\right)_{|Y_k}=0.$$ Hence $$P=\Phi\left(\overline{P}\right)$$ since $\chi^{\left(k\right)}=G.Y_k$, whence $\Phi$ is bijective.
\end{proof}
\subsection{}
We recall that $\theta$ is the map 
\[
\begin{array}{ccc}
\mathbf{P}_{-,\mathrm{u}}\times\mathfrak{p}_{\mathrm{reg}} & \longrightarrow & \chi\\
\left(g,x\right) & \longmapsto & \left(g\left(x\right),\bar{x}\right).
\end{array}
\]

\begin{prop}
\label{prop:Wk-smooth}Let 
\[
W_{k}:=\left\{ \left(x_{1},\ldots,x_{k},\overline{y_{1}},\ldots,\overline{y_{k}}\right)\in\chi^{\left(k\right)}\mid\exists i\in\left\{ 1,\ldots,n\right\} \mbox{ such that }x_{i}\in\mathfrak{g}_{\mathrm{reg}}\right\} .
\]

(i) The subset $W_{k}$ is a smooth open subset of $\chi^{\left(k\right)}$.

(ii) The codimensions of $\chi^{\left(k\right)}\setminus W_{k}$ in
$\chi^{\left(k\right)}$ and of $G\times_{\mathbf{P}}\left(\mathfrak{p}^{k}\setminus\mathrm{pr}_{1,k}\left(W_{k}\right)\right)$
in $G\times_{\mathbf{P}}\mathfrak{p}^{k}$ are at least $k$.

(iii) The restriction of $\sigma_{k}$ to $\sigma_{k}^{-1}\left(W_{k}\right)$
is an isomorphism onto $W_{k}$.\end{prop}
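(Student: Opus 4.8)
The plan is to establish (iii) first, deduce (i) from it, and then settle (ii) by a dimension count; the whole argument rests on the local model furnished by $\theta$ together with a $G$-equivariance. To set this up, note that $G$ acts on $\chi$ through its action on the $\mathfrak{g}$-factor of $\chi_{0}\subseteq\mathfrak{g}\times\mathfrak{h}$: the subalgebra $S(\mathfrak{h})^{W}$ over which the tensor product is formed is identified with $S(\mathfrak{g})^{G}$, so this action is well defined and commutes with the action of $W_{\mathfrak{l}}$ on the $\mathfrak{h}$-factor, and it descends to $\chi$. Hence $G$ acts diagonally on the $\mathfrak{g}$-coordinates of $\chi^{k}$, the morphism $\sigma_{k}$ is $G$-equivariant, and $\chi^{(k)}$ is $G$-stable. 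Since $\mathfrak{g}_{\mathrm{reg}}$ is open in $\mathfrak{g}$, the set $W_{k}$ is an open $G$-stable subset of $\chi^{(k)}$ and $\sigma_{k}^{-1}(W_{k})=G\times_{\mathbf{P}}\Omega$ with $\Omega:=\{(x_{1},\dots,x_{k})\in\mathfrak{p}^{k}\mid x_{i}\in\mathfrak{p}_{\mathrm{reg}}\text{ for some }i\}$, an open subset of the smooth variety $G\times_{\mathbf{P}}\mathfrak{p}^{k}$. Thus once (iii) is known, $W_{k}$ is isomorphic to the smooth open set $\sigma_{k}^{-1}(W_{k})$, which yields (i).

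For (iii) I would work in the big cell. Write $V:=\theta(\mathbf{P}_{-,\mathrm{u}}\times\mathfrak{p}_{\mathrm{reg}})$, an open subset of $\chi$ by Lemma \ref{lem:tetta-isom}(iii). Over the big cell $\mathbf{P}_{-,\mathrm{u}}\mathbf{P}$ of $G$ one has $G\times_{\mathbf{P}}\mathfrak{p}^{k}\cong\mathbf{P}_{-,\mathrm{u}}\times\mathfrak{p}^{k}$, and on this chart $\sigma_{k}$ reads $(u,x_{1},\dots,x_{k})\mapsto(u(x_{1}),\dots,u(x_{k}),\overline{x_{1}},\dots,\overline{x_{k}})$. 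On the locus where $x_{i}\in\mathfrak{p}_{\mathrm{reg}}$ the $i$-th pair is $(u(x_{i}),\overline{x_{i}})=\theta(u,x_{i})$, so $\theta^{-1}$ recovers $(u,x_{i})$ and the other coordinates are then recovered by $x_{j}=u^{-1}(z_{j})$. Concretely, on the open subset $V_{i}:=\{(z_{1},\dots,z_{k},\zeta_{1},\dots,\zeta_{k})\in\chi^{(k)}\mid(z_{i},\zeta_{i})\in V\}$ I define $\tau$ by $(u,x_{i}):=\theta^{-1}(z_{i},\zeta_{i})$ and $x_{j}:=u^{-1}(z_{j})$. That $x_{j}\in\mathfrak{p}$, so that $\tau$ is valued in $G\times_{\mathbf{P}}\mathfrak{p}^{k}$, follows exactly as in the proof of Proposition \ref{prop:desing-of-qik}(ii): if $(z_{*},\zeta_{*})=\sigma_{k}(\overline{(g,w_{1},\dots,w_{k})})$, then Lemma \ref{lem:tetta-isom}(i),(ii) force $g\in u\mathbf{P}$, say $g=up'$ with $p'\in\mathbf{P}$, whence $x_{j}=u^{-1}(z_{j})=p'(w_{j})\in\mathbf{P}(w_{j})\subset\mathfrak{p}$ and $\overline{(u,x_{1},\dots,x_{k})}=\overline{(g,w_{1},\dots,w_{k})}$; thus $\tau$ inverts $\sigma_{k}$ on $V_{i}$.

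It remains to cover $W_{k}$ by the $G$-translates of the $V_{i}$, which rests on the claim that every $(z,\zeta)\in\chi$ with $z\in\mathfrak{g}_{\mathrm{reg}}$ is $G$-conjugate into $V$. This I would obtain from the corollary producing the surjective birational morphism $\vartheta\colon G\times_{\mathbf{P}}\mathfrak{p}\to\chi$: write $(z,\zeta)=(g(x),\overline{x})$ with $x\in\mathfrak{p}$; since $z$ is regular so is $x=g^{-1}(z)$, hence $x\in\mathfrak{p}_{\mathrm{reg}}$ and $g^{-1}\cdot(z,\zeta)=(x,\overline{x})=\theta(1,x)\in V$. Applying this to the $i$-th pair of any point of $W_{k}$ whose $i$-th $\mathfrak{g}$-coordinate is regular gives $W_{k}=\bigcup_{i}G\cdot(V_{i})$. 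Since $\sigma_{k}$ is $G$-equivariant and $\tau$ inverts it on each $V_{i}$, while $\sigma_{k}$ is injective over $W_{k}$ by Proposition \ref{prop:desing-of-qik}(ii), the local inverses glue to a global inverse, so $\sigma_{k}\colon\sigma_{k}^{-1}(W_{k})\to W_{k}$ is an isomorphism; this proves (iii), and hence (i).

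For (ii) I would use that $\sigma_{k}^{-1}(\chi^{(k)}\setminus W_{k})=G\times_{\mathbf{P}}(\mathfrak{p}\setminus\mathfrak{p}_{\mathrm{reg}})^{k}$. As $\mathfrak{p}_{\mathrm{reg}}$ is a nonempty proper open subset of $\mathfrak{p}$, the factor $\mathfrak{p}\setminus\mathfrak{p}_{\mathrm{reg}}$ has codimension at least $1$, so $(\mathfrak{p}\setminus\mathfrak{p}_{\mathrm{reg}})^{k}$ has codimension at least $k$ in $\mathfrak{p}^{k}$; this is exactly the codimension bound for $G\times_{\mathbf{P}}(\mathfrak{p}^{k}\setminus\mathrm{pr}_{1,k}(W_{k}))$. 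Since $\sigma_{k}$ is birational, $\dim\chi^{(k)}=\dim G\times_{\mathbf{P}}\mathfrak{p}^{k}$, and $\sigma_{k}$ surjects the above set onto $\chi^{(k)}\setminus W_{k}$, so $\dim(\chi^{(k)}\setminus W_{k})\leqslant\dim\chi^{(k)}-k$, giving the second bound. The main obstacle is (iii), and within it the covering claim together with the verification that the chart-wise inverse $\tau$ really takes values in $\mathfrak{p}^{k}$; it is the $G$-equivariance, combined with surjectivity of $\vartheta$, that reduces the global isomorphism to the single big-cell computation.
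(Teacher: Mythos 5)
Your proof is correct and follows essentially the same route as the paper: the chart isomorphism $\mathbf{P}_{-,\mathrm{u}}\times\mathfrak{p}_{\mathrm{reg}}\times\mathfrak{p}^{k-1}\cong V_{1}$ built from $\theta^{-1}$ and Lemma \ref{lem:tetta-isom}, translation by $G$ and $\mathfrak{S}_{k}$ to cover $W_{k}$, and the same codimension count via $G\times_{\mathbf{P}}\left(\mathfrak{p}\setminus\mathfrak{p}_{\mathrm{reg}}\right)^{k}$ for (ii). The only cosmetic difference is the order of the argument: you construct the inverse of $\sigma_{k}$ explicitly and deduce (i) from (iii), whereas the paper establishes (i) first from the same chart and then obtains (iii) from the injectivity given by Proposition \ref{prop:desing-of-qik} (ii) together with Zariski's main theorem.
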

\begin{proof}
(i) Let $W'_{k}$ be the inverse image of $\theta\left(\mathbf{P}_{-,\mathrm{u}}\times\mathfrak{p}_{\mathrm{reg}}\right)$
by the projection 
\[
\begin{array}{ccc}
\chi^{\left(k\right)} & \longrightarrow & \chi\\
\left(x_{1},\ldots,x_{k},\overline{y_{1}},\ldots,\overline{y_{k}}\right) & \longmapsto & \left(x_{1},\overline{y_{1}}\right).
\end{array}
\]
 By Lemma \ref{lem:tetta-isom} (iii), the image of $\theta$ is an
open set of $\chi$. Then $W'_{k}$ is an open subset of $\chi^{\left(k\right)}$.

Let $\left(x_{1},\ldots,x_{k}\right)$ be in $\mathfrak{p}^{k}$ and
let $g$ be in $G$ such that $\left(g\left(x_{1}\right),\ldots,g\left(x_{k}\right),\overline{x_{1}},\ldots,\overline{x_{k}}\right)$
is in $W'_{k}$. Then $x_{1}$ is in $\mathfrak{p}_{\mathrm{reg}}$
and for some $g'$ in $\mathbf{P}_{-,\mathrm{u}}$ and for some $x'_{1}$
in $\mathfrak{p}_{\mathrm{reg}}$ 
\[
g.\left(x_{1},\overline{x_{1}}\right)=g'.\left(x'_{1},\overline{x'_{1}}\right).
\]
 By Lemma \ref{lem:tetta-isom} (i), there exists $p\in\mathbf{P}$
such that $x'_{1}=p\left(x_{1}\right)$. So $g^{-1}g'p$ is in $G^{x_{1}}$,
since $G^{x_{1}}\subset\mathbf{P}$ by Lemma \ref{lem:tetta-isom}
(ii). Hence $g\in\mathbf{P}_{-,\mathrm{u}}\mathbf{P}$. As a result,
the map
\[
\begin{array}{ccc}
\mathbf{P}_{-,\mathrm{u}}\times\mathfrak{p}_{\mathrm{reg}}\times\mathfrak{p}^{k-1} & \longrightarrow & W'_{k}\\
\left(g,x_{1},\ldots,x_{k}\right) & \longmapsto & \left(g\left(x_{1}\right),\ldots,g\left(x_{k}\right),\overline{x_{1}},\ldots,\overline{x_{k}}\right)
\end{array}
\]
 is an isomorphism whose inverse is 
\[
\begin{array}{ccc}
W'_{k} & \longrightarrow & \mathbf{P}_{-,\mathrm{u}}\times\mathfrak{p}_{\mathrm{reg}}\times\mathfrak{p}^{k-1}\\
\left(x_{1},\ldots,x_{k},\overline{y_{1}},\ldots,\overline{y_{k}}\right) & \longmapsto & \left(q,z,q^{-1}\left(x_{2}\right),\ldots,q^{-1}\left(x_{k}\right)\right),
\end{array}
\]
 with $q$ and $z$ are the first and the second components of $\theta^{-1}\left(x_{1},\overline{y_{1}}\right)$.
As a result, $W'_{k}$ and $G.W'_{k}$ are smooth open subsets of
$\chi^{\left(k\right)}$, whence the assertion since $\chi^{\left(k\right)}$
is $\mathfrak{S}_{k}$-invariant.

(ii) By definition, $G\times_{\mathbf{P}}\left(\mathfrak{p}^{k}\setminus\mathrm{pr}_{1,k}\left(W_{k}\right)\right)$
is contained in $G\times_{\mathbf{P}}\left(\mathfrak{p\setminus\mathfrak{p}_{\mathrm{reg}}}\right)^{k}$
and $\chi^{\left(k\right)}\setminus W_{k}$ is contained in the image
of $G\times_{\mathbf{P}}\left(\mathfrak{\mathfrak{p\setminus\mathfrak{p}_{\mathrm{reg}}}}\right)^{k}$
by $\sigma_{k}$. Then 
\[
{\dim\chi^{\left(k\right)}\setminus W_{k}\leqslant\dim G\times_{\mathbf{P}}\left(\mathfrak{p\setminus\mathfrak{p}_{\mathrm{reg}}}\right)^{k}\leqslant k\dim\mathfrak{p}+\dim\mathfrak{p}_{\mathrm{u}}-k\atop \dim G\times_{\mathbf{P}}\left(\mathfrak{p}^{k}\setminus\mathrm{pr}_{1,k}\left(W_{k}\right)\right)\leqslant\dim G\times_{\mathbf{P}}\left(\mathfrak{\mathfrak{p\setminus\mathfrak{p}_{\mathrm{reg}}}}\right)^{k}\leqslant k\dim\mathfrak{p}+\dim\mathfrak{p}_{\mathrm{u}}-k,}
\]
 whence the assertion since $\dim G\times_{\mathbf{P}}\mathfrak{p}^{k}=\dim\mathcal{P}^{\left(k\right)}=k\dim\mathfrak{p}+\dim\mathfrak{p}_{\mathrm{u}}$
.

(iii) By Proposition \ref{prop:desing-of-qik} (ii), the restriction
of $\sigma_{k}$ to $\sigma_{k}^{-1}\left(W_{k}\right)$ is bijective.
Whence the assertion by Zariski Main Theorem \cite{key-5} since $W_{k}$
is a smooth open subset of $\chi^{\left(k\right)}$ by (i).\end{proof}

\section{On the variety $\mathcal{P}_{\mathfrak{\mathrm{u}}}^{\left(k\right)}$}

Recall that $\mu_{k}$ is the morphism from $G\times_{\mathbf{P}}\mathfrak{p}^{k}$
to $\mathfrak{g}$ defined by the map $$(g,x_{1},\ldots,x_{k})\mapsto\left(g\left(x_{1}\right),\ldots,g\left(x_{k}\right)\right)$$
from $G\times\mathfrak{p}^{k}$ to $\mathfrak{g}^{k}$ through the
quotient. Let $\tau_{k}$ be the restriction of $\mu_{k}$ to $G\times_{\mathbf{P}}\mathfrak{p}_{\mathrm{u}}^{k}$
and let 
\[
\mathcal{P}_{\mathfrak{\mathrm{u}}}^{\left(k\right)}:=\left\{ \left(x_{1},\ldots,x_{k}\right)\in\mathfrak{g}^{k}\mid\exists g\in G\textrm{ such that }\left(g\left(x_{1}\right),\ldots,g\left(x_{k}\right)\right)\in\mathfrak{p}_{\mathrm{u}}^{k}\right\} .
\]
\subsection{}

Let $\Bbbk\left(G\times_{\mathbf{P}}\mathfrak{p}_{\mathrm{u}}\right)$
and $\Bbbk\left(G\left(\mathfrak{p}_{\mathrm{u}}\right)\right)$ be
the fields of the rational functions of $G\times_{\mathbf{P}}\mathfrak{p}_{\mathrm{u}}$
and $G\left(\mathfrak{p}_{\mathrm{u}}\right)$ respectively and let
$\mathcal{A}$ be the integral closure of $\Bbbk\left[G\left(\mathfrak{p}_{\mathrm{u}}\right)\right]$
in $\Bbbk\left(G\times_{\mathbf{P}}\mathfrak{p}_{\mathrm{u}}\right)$.
\begin{lem}
The morphism $\tau_{1}$ is a projective morphism. Moreover, $\Bbbk\left(G\times_{\mathbf{P}}\mathfrak{p}_{\mathrm{u}}\right)$
is an algebraic extension of finite degree of $\Bbbk\left(G\left(\mathfrak{p}_{\mathrm{u}}\right)\right)$.\end{lem}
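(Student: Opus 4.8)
The plan is to prove the two assertions separately: projectivity of $\tau_1$ is immediate from Lemma \ref{Lem1.4}, while the finiteness of the field extension reduces to a dimension count resting on the theory of Richardson orbits.

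For the first assertion, I would observe that $\mathfrak{p}_{\mathrm{u}}$ is a closed subset of the $G$-variety $\mathfrak{g}$ which is invariant under $\mathbf{P}$. Applying Lemma \ref{Lem1.4} with $Q=G$ and $P=\mathbf{P}$, so that $G/\mathbf{P}$ is projective, the canonical morphism from $G\times_{\mathbf{P}}\mathfrak{p}_{\mathrm{u}}$ to $G.\mathfrak{p}_{\mathrm{u}}=G(\mathfrak{p}_{\mathrm{u}})$ is projective; this morphism is exactly $\tau_1$, being the restriction of $\mu_1$ to $G\times_{\mathbf{P}}\mathfrak{p}_{\mathrm{u}}$ (and the image $G(\mathfrak{p}_{\mathrm{u}})$ is then closed in $\mathfrak{g}$).

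For the second assertion, I would first note that $G\times_{\mathbf{P}}\mathfrak{p}_{\mathrm{u}}$ is irreducible, being a fiber bundle over $G/\mathbf{P}$ with fiber $\mathfrak{p}_{\mathrm{u}}$; hence its image $G(\mathfrak{p}_{\mathrm{u}})$ under $\tau_1$ is irreducible and $\tau_1$ is dominant. Since $\Bbbk$ has characteristic zero, it then suffices to prove that source and target have equal dimension: a dominant morphism of irreducible varieties of the same dimension induces a finitely generated field extension of transcendence degree zero, hence an algebraic extension of finite degree, which is precisely the claim about $\Bbbk(G\times_{\mathbf{P}}\mathfrak{p}_{\mathrm{u}})$ over $\Bbbk(G(\mathfrak{p}_{\mathrm{u}}))$. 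On the source side, $\dim G\times_{\mathbf{P}}\mathfrak{p}_{\mathrm{u}}=\dim G/\mathbf{P}+\dim\mathfrak{p}_{\mathrm{u}}=2\dim\mathfrak{p}_{\mathrm{u}}$, using $\dim G/\mathbf{P}=\dim\mathbf{P}_{-,\mathrm{u}}=\dim\mathfrak{p}_{\mathrm{u}}$. On the target side, by Richardson's theorem the orbit $\Omega$ meets $\mathfrak{p}_{\mathrm{u}}$ in the dense subset $\mathfrak{p}'_{\mathrm{u}}$, so that $\Omega=G.\mathfrak{p}'_{\mathrm{u}}$ is dense in the closed set $G(\mathfrak{p}_{\mathrm{u}})$ and $\dim G(\mathfrak{p}_{\mathrm{u}})=\dim\Omega=2\dim\mathfrak{p}_{\mathrm{u}}$. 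These two dimensions agree, which closes the argument.

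The hard part will be the equality $\dim\Omega=2\dim\mathfrak{p}_{\mathrm{u}}$, equivalently the generic finiteness of the collapsing map $\tau_1$; this is the heart of Richardson's dense-orbit theorem. Concretely it comes down to showing that for a point $x$ in the open $\mathbf{P}$-orbit of $\mathfrak{p}_{\mathrm{u}}$ one has $G^{x}\subseteq\mathbf{P}$, so that $\dim G^{x}=\dim\mathbf{P}^{x}=\dim\mathbf{P}-\dim\mathfrak{p}_{\mathrm{u}}=\dim\mathfrak{l}$ and hence $\dim\Omega=\dim\mathfrak{g}-\dim\mathfrak{l}=2\dim\mathfrak{p}_{\mathrm{u}}$, using $\dim\mathfrak{g}=\dim\mathfrak{l}+2\dim\mathfrak{p}_{\mathrm{u}}$. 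This containment of the centralizer in $\mathbf{P}$ is the point where care is required.
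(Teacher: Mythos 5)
Your overall route is the same as the paper's: projectivity of $\tau_{1}$ follows from Lemma \ref{Lem1.4} applied to the $\mathbf{P}$-invariant closed subset $\mathfrak{p}_{\mathrm{u}}$ of $\mathfrak{g}$, and the finiteness of the field extension follows because $\tau_{1}$ is dominant onto the irreducible variety $G\left(\mathfrak{p}_{\mathrm{u}}\right)$ and $\dim G\times_{\mathbf{P}}\mathfrak{p}_{\mathrm{u}}=\dim G\left(\mathfrak{p}_{\mathrm{u}}\right)$. The paper simply asserts this dimension equality; you supply the Richardson-orbit computation behind it, which is legitimate and worth spelling out.

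The one point that needs correction is precisely the step you flagged as delicate. For $x$ in the dense $\mathbf{P}$-orbit of $\mathfrak{p}_{\mathrm{u}}$ it is \emph{not} true in general that $G^{x}\subseteq\mathbf{P}$. Richardson's theorem (Collingwood--McGovern, Theorem 7.1.1, which the paper invokes later in Proposition \ref{prop:stab-yota-x}) only gives that $G^{x}$ and $\mathbf{P}\cap G^{x}$ have the same identity component, equivalently $\mathfrak{g}^{x}=\mathfrak{p}^{x}$. The generic fiber of $\tau_{1}$ over $\Omega$ is $G^{x}/\left(G^{x}\cap\mathbf{P}\right)$, so the inclusion $G^{x}\subseteq\mathbf{P}$ is equivalent to $\tau_{1}$ being birational; if that held in general, $X$ would coincide with the normalization of $G\left(\mathfrak{p}_{\mathrm{u}}\right)$ and most of Section 4 of the paper would be unnecessary, and there are well-known parabolics for which the Richardson map is not birational. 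Fortunately your argument only uses the dimension consequence $\dim G^{x}=\dim\left(\mathbf{P}\cap G^{x}\right)=\dim\mathbf{P}-\dim\mathfrak{p}_{\mathrm{u}}=\dim\mathfrak{l}$, which follows from the identity-component statement alone; replace the group-level inclusion by that weaker (and true) statement and the proof is complete.
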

\begin{proof}
According to Lemma \ref{Lem1.4}, $\tau_{1}$ is a projective
morphism. Since $G\left(\mathfrak{p}_{\mathrm{u}}\right)$ is the
image of $G\times_{\mathbf{P}}\mathfrak{p}_{\mathrm{u}}$ by $\tau_{1}$,
$\tau_{1}$ induces an embedding from $\Bbbk\left(G\left(\mathfrak{p}_{\mathrm{u}}\right)\right)$
into $\Bbbk\left(G\times_{\mathbf{P}}\mathfrak{p}_{\mathrm{u}}\right)$.
Hence $\Bbbk\left(G\times_{\mathbf{P}}\mathfrak{p}_{\mathrm{u}}\right)$
is an algebraic extension of $\Bbbk\left(G\left(\mathfrak{p}_{\mathrm{u}}\right)\right)$
of finite degree since $\dim G\times_{\mathbf{P}}\mathfrak{p}_{\mathrm{u}}=\dim G\left(\mathfrak{p}_{\mathrm{u}}\right)$.
\end{proof}
Let $X=Spec\mathcal{A}$ and let $\alpha$ be the morphism from $X$
to $G\left(\mathfrak{p}_{\mathrm{u}}\right)$ such that its comorphism
is the canonical injection from $\Bbbk\left[G\left(\mathfrak{p}_{\mathrm{u}}\right)\right]$
into $\mathcal{A}$.
\begin{prop}
\label{prop:tauX}There exists a unique morphism $\tau_{X}$ from
$G\times_{\mathbf{P}}\mathfrak{p}_{\mathrm{u}}$ to $X$ such that
$\tau_{1}=\alpha\circ\tau_{X}$. Moreover, $G\times_{\mathbf{P}}\mathfrak{p}_{\mathrm{u}}$
is a desingularization of $X$ of morphism $\tau_{X}$.\end{prop}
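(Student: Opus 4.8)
The strategy is to recognize $X$ as the normalization of $G\left(\mathfrak{p}_{\mathrm{u}}\right)$ in the function field of $Y:=G\times_{\mathbf{P}}\mathfrak{p}_{\mathrm{u}}$ and to exploit that $Y$ is smooth. Indeed $Y$ is the total space of the $G$-homogeneous vector bundle over the projective variety $G/\mathbf{P}$ associated with the $\mathbf{P}$-module $\mathfrak{p}_{\mathrm{u}}$, hence $Y$ is smooth, in particular normal and irreducible. Since $G\left(\mathfrak{p}_{\mathrm{u}}\right)$ is the image of $\tau_{1}$, the morphism $\tau_{1}$ is dominant, so its comorphism realizes $\Bbbk\left(G\left(\mathfrak{p}_{\mathrm{u}}\right)\right)$ as a subfield of $\Bbbk\left(Y\right)$; through this embedding both $\Bbbk\left[G\left(\mathfrak{p}_{\mathrm{u}}\right)\right]$ and its integral closure $\mathcal{A}$ are subrings of $\Bbbk\left(Y\right)$.

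First I would construct $\tau_{X}$. It suffices to produce a $\Bbbk$-algebra homomorphism $\mathcal{A}\to\Bbbk\left[Y\right]$ compatible with $\alpha$, and I claim the inclusion $\mathcal{A}\subset\Bbbk\left(Y\right)$ already lands in $\Bbbk\left[Y\right]$. Let $f\in\mathcal{A}$; by definition $f$ satisfies a monic equation with coefficients in $\Bbbk\left[G\left(\mathfrak{p}_{\mathrm{u}}\right)\right]$, and pulling this equation back along $\tau_{1}$ exhibits $f$ as an element of $\Bbbk\left(Y\right)$ integral over $\Bbbk\left[Y\right]$. For every $y\in Y$ the local ring $\mathcal{O}_{Y,y}$ is integrally closed in $\Bbbk\left(Y\right)$ because $Y$ is normal, so $f\in\mathcal{O}_{Y,y}$; as this holds at every point, $f\in\Bbbk\left[Y\right]$. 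The resulting inclusion $\mathcal{A}\hookrightarrow\Bbbk\left[Y\right]$ defines $\tau_{X}\colon Y\to X$, and $\alpha\circ\tau_{X}=\tau_{1}$ since the comorphism of $\alpha$ is the inclusion $\Bbbk\left[G\left(\mathfrak{p}_{\mathrm{u}}\right)\right]\hookrightarrow\mathcal{A}$. This is the content of the universal property of the normalization, which also yields uniqueness: any factorization of $\tau_{1}$ through $\alpha$ must induce on function fields the canonical inclusion $\mathcal{A}\subset\Bbbk\left(Y\right)$, hence agrees with $\tau_{X}$ on a dense open subset and therefore everywhere, $X$ being separated.

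For the second assertion I would verify that $\tau_{X}$ is a proper birational morphism onto $X$ from the smooth variety $Y$. Birationality follows from $\mathrm{Frac}\left(\mathcal{A}\right)=\Bbbk\left(Y\right)$: as $\Bbbk\left(Y\right)$ is a finite extension of $\Bbbk\left(G\left(\mathfrak{p}_{\mathrm{u}}\right)\right)$, every element of $\Bbbk\left(Y\right)$ becomes integral after multiplication by a suitable element of $\Bbbk\left[G\left(\mathfrak{p}_{\mathrm{u}}\right)\right]$, so $\Bbbk\left(X\right)=\mathrm{Frac}\left(\mathcal{A}\right)=\Bbbk\left(Y\right)$ and $\tau_{X}$ induces an isomorphism of function fields. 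For properness, note that $\mathcal{A}$ is a finite $\Bbbk\left[G\left(\mathfrak{p}_{\mathrm{u}}\right)\right]$-module by the finiteness of integral closure for finitely generated $\Bbbk$-algebras; hence $\alpha$ is finite and in particular separated. Since $\tau_{1}=\alpha\circ\tau_{X}$ is projective by the preceding lemma, separatedness of $\alpha$ forces $\tau_{X}$ to be proper. Finally a proper dominant morphism onto the irreducible variety $X$ is surjective, so $\tau_{X}$ exhibits $Y$ as a desingularization of $X$.

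The main obstacle is the construction of $\tau_{X}$: the real content is that the a priori only rational functions generating $\mathcal{A}$ are in fact regular on $Y$, which rests entirely on the normality (smoothness) of $Y$. Once this is secured the desingularization properties are formal: smoothness is built in, birationality is the equality $\mathrm{Frac}\left(\mathcal{A}\right)=\Bbbk\left(Y\right)$, and properness of $\tau_{X}$ is deduced from properness of $\tau_{1}$ together with separatedness of the finite morphism $\alpha$.
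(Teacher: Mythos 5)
Your proposal is correct and follows essentially the same route as the paper: normality (via smoothness) of $G\times_{\mathbf{P}}\mathfrak{p}_{\mathrm{u}}$ forces $\mathcal{A}$ to consist of regular functions, which yields $\tau_{X}$; birationality comes from $\mathrm{Frac}\left(\mathcal{A}\right)=\Bbbk\left(G\times_{\mathbf{P}}\mathfrak{p}_{\mathrm{u}}\right)$; and properness is inherited from the projectivity of $\tau_{1}$. The only cosmetic difference is that you argue with local rings and global sections where the paper covers by affine opens and glues, and you spell out the birationality and cancellation-of-properness steps in slightly more detail.
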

\begin{proof}
The variety $G\times_{\mathbf{P}}\mathfrak{p}_{\mathrm{u}}$ is a
smooth variety since it is a vector bundle over the smooth variety
$G/\mathbf{P}$. Identify $\Bbbk\left(G\left(\mathfrak{p}_{\mathrm{u}}\right)\right)$
with a subfield of $\Bbbk\left(G\times_{\mathbf{P}}\mathfrak{p}_{\mathrm{u}}\right)$
by the comorphism of $\tau_{1}$. Let $U$ be an affine open subset
of $G\times_{\mathbf{P}}\mathfrak{p}_{\mathrm{u}}$. Then $\Bbbk\left[U\right]$
contains $\Bbbk\left[G\left(\mathfrak{p}_{\mathrm{u}}\right)\right]$
and the comorphism of the restriction map of $\tau_{1}$ to $U$ is
the canonical injection of $\Bbbk\left[G\left(\mathfrak{p}_{\mathrm{u}}\right)\right]$
in $\Bbbk\left[U\right]$. Since $U$ is a smooth variety, it is normal and $\Bbbk\left[U\right]$ is integrally closed in $\Bbbk\left(G\times_{\mathbf{P}}\mathfrak{p}_{\mathrm{u}}\right)$.
Hence $\Bbbk\left[U\right]$ contains $\mathcal{A}$. Then there exists
a morphism $\tau_{X,U}$ from $U$ to $X$ such that its comorphism
is the canonical injection of $\mathcal{A}$ in $\Bbbk\left[U\right]$.
For $U'$ an affine open subset of $G\times_{\mathbf{P}}\mathfrak{p}_{\mathrm{u}}$,
the restrictions of $\tau_{X,U}$ and $\tau_{X,U'}$ on $U\cap U'$
are equal since they have the same comorphism. Hence there exists
a morphism $\tau_{X}$ from $G\times_{\mathbf{P}}\mathfrak{p}_{\mathrm{u}}$
to $X$ which extends $\tau_{X,U}$ for all affine open subset $U$
and satisfies the equality $\tau_{1}=\alpha\circ\tau_{X}$. The morphism
$\tau_{X}$ is unique since its comorphism is the canonical injection
from $\mathcal{A}$ to $\Bbbk\left(G\times_{\mathbf{P}}\mathfrak{p}_{\mathrm{u}}\right)$.
Since $\Bbbk\left(G\times_{\mathbf{P}}\mathfrak{p}_{\mathrm{u}}\right)$
is the fraction field of $\mathcal{A}$, $\tau_{X}$ is birational.
Moreover, since $\tau_{1}$ is a projective morphism, $\tau_{X}$
is too, whence the proposition. 
\end{proof}
Denote by $\kappa$ the quotient map from $G\times\mathfrak{p}_{\mathrm{u}}$
to $G\times_{\mathbf{P}}\mathfrak{p}_{\mathrm{u}}$ and denote by
$\iota$ the morphism from $\left\{ 1\right\} \times\mathfrak{p}{}_{\mathrm{u}}$
to $X$ defined by $\iota\left(1,x\right)=\tau_{X}\circ\kappa\left(1,x\right)$.
Let $X'$ be the image of $\left\{ 1\right\} \times\mathfrak{p}'_{\mathrm{u}}$
by $\iota$ and let $\delta$ be the morphism from $\mathbf{P}_{-,\mathrm{u}}\times X'$
to $X$ defined by $\delta\left(p,x\right)=p.x$. 
\begin{prop}
\label{prop:stab-yota-x}(i) For $x\in\mathfrak{p}'_{\mathrm{u}}$,
the stabilizer of $\iota\left(1,x\right)$ in $G$ equals $\mathbf{P}\cap G^{x}$.

(ii) The morphism $\delta$ is an isomorphism onto an open subset
of $X$.

(iii) Let $g$ be in $G$ and let $x$ be in $X'$. If $g(x)$ is
in $X'$, then $g$ is in $\mathbf{P}$.\end{prop}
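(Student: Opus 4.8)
The plan is to deduce all three parts from a single geometric input: that the birational projective morphism $\tau_X$ of Proposition \ref{prop:tauX} is an isomorphism over the Richardson orbit $\Omega$, in the sense that $\tau_X$ maps the fibre $\tau_1^{-1}(y)=\tau_X^{-1}(\alpha^{-1}(y))$ bijectively onto $\alpha^{-1}(y)$ for every $y\in\Omega$. Since $\mathfrak{p}'_{\mathrm{u}}=\Omega\cap\mathfrak{p}_{\mathrm{u}}\subseteq\Omega$, this separation is exactly what is needed to control the $G$-action on the points $\iota(1,x)$. Throughout I would use that $G$ stabilizes $\Bbbk[G(\mathfrak{p}_{\mathrm{u}})]$ and hence its integral closure $\mathcal{A}$, so that $X$ is a $G$-variety and $\tau_X,\alpha$ are $G$-equivariant, together with the elementary identities $\alpha(\iota(1,x))=x$, $\;g.\iota(1,x)=\tau_X(\overline{(g,x)})$, and $\overline{(p,x)}=\overline{(1,p(x))}$ for $p\in\mathbf{P}$.

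To prove the key input, let $V$ be the largest open subset of $X$ over which $\tau_X$ is an isomorphism; it is dense since $\tau_X$ is birational, and $G$-stable by equivariance and maximality. Its complement $F$ is then a $G$-stable proper closed subset of $X$, so $\alpha(F)$ is a closed $G$-stable subset of $G(\mathfrak{p}_{\mathrm{u}})=\overline{\Omega}$ with $\dim\alpha(F)=\dim F<\dim X=\dim G(\mathfrak{p}_{\mathrm{u}})$ (the outer equalities since $\alpha$ is finite). As $\Omega$ is the dense $G$-orbit, $\alpha(F)\cap\Omega$ is $G$-stable and strictly smaller than $\Omega$, hence empty; therefore $\alpha^{-1}(\Omega)\subseteq V$ and $\tau_X$ is an isomorphism over $\Omega$. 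I expect this codimension-plus-homogeneity step to be the main obstacle: it is where the normality of $X$ (which forces $F$ to have positive codimension) and the fact that $\Omega$ is a single dense orbit are both essential.

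For (i), the inclusion $\mathbf{P}\cap G^{x}\subseteq\mathrm{Stab}_{G}(\iota(1,x))$ is immediate, since for $p\in\mathbf{P}\cap G^{x}$ one has $p.\iota(1,x)=\iota(1,p(x))=\iota(1,x)$. Conversely, if $g$ fixes $\iota(1,x)$, applying $\alpha$ and using equivariance and $\alpha(\iota(1,x))=x$ gives $g(x)=x$, so $g\in G^{x}$; the fixing condition now reads $\tau_X(\overline{(g,x)})=\tau_X(\overline{(1,x)})$, and both points lie over $x\in\mathfrak{p}'_{\mathrm{u}}\subseteq\Omega$, so the key fact forces $\overline{(g,x)}=\overline{(1,x)}$ in $G\times_{\mathbf{P}}\mathfrak{p}_{\mathrm{u}}$. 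Unwinding the quotient produces $p\in\mathbf{P}$ with $gp=1$ and $p^{-1}(x)=x$, whence $g\in\mathbf{P}\cap G^{x}$.

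Part (iii) runs through the same device: writing $x=\iota(1,z)$ and $g(x)=\iota(1,z')$ with $z,z'\in\mathfrak{p}'_{\mathrm{u}}$, the map $\alpha$ gives $g(z)=z'\in\Omega$ and equivariance gives $\tau_X(\overline{(g,z)})=\tau_X(\overline{(1,z')})$, with both points over $z'\in\Omega$; the key fact yields $\overline{(g,z)}=\overline{(1,z')}$, hence $g\in\mathbf{P}$. For (ii) I would first note that $\iota$ is injective on $\{1\}\times\mathfrak{p}'_{\mathrm{u}}$ (again because $\alpha\circ\iota$ is the projection), so $\dim X'=\dim\mathfrak{p}_{\mathrm{u}}$ and $\dim(\mathbf{P}_{-,\mathrm{u}}\times X')=2\dim\mathfrak{p}_{\mathrm{u}}=\dim X$. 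Then I would prove $\delta$ injective: from $p.\iota(1,x)=p'.\iota(1,x')$ with $p,p'\in\mathbf{P}_{-,\mathrm{u}}$, applying $\alpha$ gives $p(x)=p'(x')\in\Omega$, the key fact gives $\overline{(p,x)}=\overline{(p',x')}$, and unwinding the quotient yields $p^{-1}p'\in\mathbf{P}\cap\mathbf{P}_{-,\mathrm{u}}=\{1\}$, so $p=p'$ and $x=x'$. Being an injective morphism between irreducible varieties of equal dimension with $X$ irreducible, $\delta$ is dominant and birational, so by Zariski's Main Theorem \cite{key-5} and the normality of $X$ it is an isomorphism onto an open subset of $X$, exactly as $\theta$ in Lemma \ref{lem:tetta-isom} (iii).
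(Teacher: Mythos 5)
Your proposal is correct, but it reaches part (i) by a genuinely different route than the paper. The paper proves (i) by invoking \cite{key-16}, Theorem 7.1.1 --- namely that $G^{x}$ and $\mathbf{P}\cap G^{x}$ have the same identity component for $x$ in $\mathfrak{p}'_{\mathrm{u}}=\Omega\cap\mathfrak{p}_{\mathrm{u}}$ --- and then compares the function fields $\Bbbk\left(G/G^{\iota\left(1,x\right)}\right)$, $\Bbbk\left(G/\mathbf{P}\cap G^{x}\right)$ and $\Bbbk\left(G\times_{\mathbf{P}}\mathfrak{p}_{\mathrm{u}}\right)$, using the birationality of $\tau_{X}$, to force $G^{\iota\left(1,x\right)}=\mathbf{P}\cap G^{x}$. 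You instead establish once and for all that $\tau_{X}$ is an isomorphism over $\alpha^{-1}\left(\Omega\right)$, using only that $X$ is normal and irreducible, that $\tau_{X}$ is proper and birational (so its isomorphism locus is a nonempty $G$-stable open set), that $\alpha$ is finite and equivariant, and that $\Omega$ is the dense orbit in $G\left(\mathfrak{p}_{\mathrm{u}}\right)$; all three parts then follow by unwinding the quotient $G\times_{\mathbf{P}}\mathfrak{p}_{\mathrm{u}}$. This is more self-contained: it bypasses the citation of \cite{key-16} and even recovers, as a by-product, that $\mathbf{P}\cap G^{x}$ has finite index in $G^{x}$, since $G^{\iota\left(1,x\right)}$ has index at most $\deg\alpha$ in $G^{x}$. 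What the paper's argument buys is a direct proof of (i) without introducing the isomorphism locus; your treatment of (ii) and (iii) is then essentially the paper's (a separation statement plus Zariski's Main Theorem and normality of $X$), merely routed through your key fact rather than through (i). Two points you should make explicit: that $\mathcal{A}$ is $G$-stable, so that $X$ carries a $G$-action making $\alpha$ and $\tau_{X}$ equivariant (the paper uses this silently as well), and that the nonemptiness of the isomorphism locus of $\tau_{X}$ is precisely Zariski's Main Theorem applied to the proper birational morphism $\tau_{X}$ onto the normal variety $X$.
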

\begin{proof}
(i) Let $x$ be in $\mathfrak{p}'_{\mathrm{u}}$. Then, by \cite{key-16} Theorem 7.1.1, $G^x$ and $\mathbf{P}\cap G^x$ have the same identity component. Then $\mathbf{P}\cap G^x$ is a subgroup of finite index of $G^{\iota\left(1,x\right)}$ and $\Bbbk\left(G/\mathbf{P}\cap G^{x}\right)$ is an algebraic extension $\Bbbk\left(G/G^{\iota\left(1,x\right)}\right)$ of degree the index of $\mathbf{P}\cap G^x$ in $G^{\iota\left(1,x\right)}$. Moreover, $\Bbbk\left(G/G^{\iota\left(1,x\right)}\right)$ is an algebraic extension of finite degree of a transcendental extension of $\Bbbk$. Since $\tau_X$ is a birational morphism and since $G.\iota\left(1,x\right)$ is an open subset of $X$, $\Bbbk\left(G/G^{\iota\left(1,x\right)}\right)$ and $\Bbbk\left(G\times_{\mathbf{P}}\mathfrak{p}_{\mathrm{u}}\right)$ are isomorphic. Since $G\times_{\mathbf{P}}\mathbf{P}\left(x\right)$ is an open subset of $G\times_{\mathbf{P}}\mathfrak{p}_{\mathrm{u}}$, $\Bbbk\left(G\times_{\mathbf{P}}\mathfrak{p}_{\mathrm{u}}\right)$ and $\Bbbk\left(G/\mathbf{P}\cap G^{x}\right)$ are isomorphic. So that $\Bbbk\left(G/G^{\iota\left(1,x\right)}\right)$ and $\Bbbk\left(G/\mathbf{P}\cap G^{x}\right)$ are isomorphic. Hence $G^{\iota\left(1,x\right)}=\mathbf{P}\cap G^{x}$.

(ii) Let $\left(p_{1},x_{1}\right)$ and $\left(p_{2},x_{2}\right)$
be in $\mathbf{P}_{-,\mathrm{u}}\times X'$ such that $p_{1}.x_1=p_{2}.x_{2}$
and let $x_{1}'$ and $x_{2}'$ be in $\mathfrak{p}'{}_{\mathrm{u}}$
such that $\iota\left(1,x_{1}'\right)=x_{1}$ and $\iota\left(1,x_{2}'\right)=x_{2}$.
Let $q$ be in $\mathbf{P}$ such that $x_{2}'=q\left(x_{1}'\right)$.
Then $x_{2}=q.x_{1}$. So
\[
\begin{array}{ccccc}
 & p_{1}.x_{1} & = & p_{2}.x_{2}=p_{2}q.x_{1}\\
\Rightarrow & p_{1}^{-1}p_{2}q & \in & G^{x_{1}}=G^{\iota\left(1,x_{1}'\right)} & =\mathbf{P}\cap G^{x_{1}'},
\end{array}
\]
 by (i). So 
\[
\begin{array}{cccccc}
 & p_{1}^{-1}p_{2} & \in & \mathbf{P}\cap\mathbf{P}_{-,\mathrm{u}} & = & \left\{ 1_{\mathfrak{g}}\right\} \\
\Rightarrow & p_{1}=p_{2} & \Rightarrow & x_{1}=x_{2}.
\end{array}
\]
 Hence $\delta$ is an injective morphism. Since $X$ is a normal
variety by definition, by Zariski's Main Theorem \cite{key-5}, $\delta$
is an isomorphism onto an open subset of $X$.

(iii) Let $g\in G$ and let $x\in X'$ such that $g\left(x\right)\in X'$.
Then there exist 
\[
x_{1}\mbox{, }y_{1}\in\mathfrak{p}{}_{\mathrm{u}}'\mbox{ and }p\in\mathbf{P}
\]
 Such that 
\[
\begin{array}{cccc}
x=\iota\left(1,x_{1}\right), & g.x=\iota\left(1,y_{1}\right) & \mbox{and} & y_{1}=p\left(x_{1}\right).\end{array}
\]
 So
\[
g.\iota\left(1,x_{1}\right)=g.x=\iota\left(1,y_{1}\right)=p.\iota\left(1,x_{1}\right)
\]
and then $p^{-1}g\in G^{\iota\left(1,x_{1}\right)}$. Hence, by (i),
$g\in\mathbf{P}$.
\end{proof}
Let $\tau_{X^{k}}$ be the morphism from $G^{k}\times_{\mathbf{P}^{k}}\mathfrak{p}{}_{\mathrm{u}}^{k}$
to $X^{k}$ defined by 
\[
\tau_{X^{k}}\overline{\left(x_{1},\ldots,x_{k}\right)}=\left(\tau_{X}\left(x_{1}\right),\ldots,\tau_{X}\left(x_{k}\right)\right),
\]
let $\tau_{X^{\left(k\right)}}$ be the restriction of $\tau_{X^{k}}$
to $G\times_{\mathbf{P}}\mathfrak{p}{}_{\mathrm{u}}^{k}$ and let
$X^{\left(k\right)}$ be the image of $\tau_{X^{\left(k\right)}}$.
\begin{prop}
\label{prop:Vk-smooth}Let 
\[
V_{k}=\left\{ \left(x_{1},\ldots x_{k}\right)\in X^{\left(k\right)}\mid\exists g\in G\mbox{ and }i\in\left\{ 1,\ldots,k\right\} \mbox{ such that }g\left(x_{i}\right)\in\mathbf{P}_{-,\mathrm{u}}\left(X'\right)\right\} .
\]

(i) The subset $V_{k}$ is a smooth open subset of $X^{\left(k\right)}$.

(ii) There exists a canonical finite surjective morphism $\varphi$ from $X^{\left(k\right)}$
to $\mathcal{P}_{\mathrm{u}}^{\left(k\right)}$.

(iii) The codimension of $X^{\left(k\right)}\backslash V_{k}$ in
$X^{\left(k\right)}$ and the codimension of $G\times_{\mathbf{P}}\left(\mathfrak{p}{}_{\mathrm{u}}^{k}\setminus\varphi\left(V_{k}\right)\right)$
in $G\times_{\mathbf{P}}\mathfrak{p}{}_{\mathrm{u}}^{k}$ are at least
$k$.

(iv) The restriction of $\tau_{X^{\left(k\right)}}$ to $\tau_{X^{\left(k\right)}}^{-1}\left(V_{k}\right)$
is an isomorphism onto $V_{k}$.\end{prop}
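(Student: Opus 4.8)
The plan is to mirror the proof of Proposition \ref{prop:Wk-smooth}, with the morphism $\delta$ and Proposition \ref{prop:stab-yota-x} playing the roles of $\theta$ and Lemma \ref{lem:tetta-isom}. Two preliminary identities will carry the whole argument. First, from $\tau_{1}=\alpha\circ\tau_{X}$ (Proposition \ref{prop:tauX}) and $\tau_{1}\circ\kappa(1,x)=x$ one gets $\alpha\circ\iota(1,x)=x$ for all $x\in\mathfrak{p}_{\mathrm{u}}$; since $\mathfrak{p}_{\mathrm{u}}$ is closed in $G(\mathfrak{p}_{\mathrm{u}})$ and $\alpha$ is finite, hence separated, the map $x\mapsto\iota(1,x)$ is a closed immersion of $\mathfrak{p}_{\mathrm{u}}$ into $X$; in particular it is injective and its inverse is a morphism on its image. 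Second, the same equivariance gives $\alpha^{k}\circ\tau_{X^{(k)}}=\tau_{k}$, which will identify both the image and the fibres of $\varphi$. Throughout I use that $\mathbf{P}_{-,\mathrm{u}}(X')=\delta(\mathbf{P}_{-,\mathrm{u}}\times X')$ is open in $X$ by Proposition \ref{prop:stab-yota-x} (ii), and that $X^{(k)}$ is $G$- and $\mathfrak{S}_{k}$-stable.

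For (i) I set $V'_{k}$ to be the inverse image of $\mathbf{P}_{-,\mathrm{u}}(X')$ under the projection $X^{(k)}\to X$ onto the first factor, an open subset of $X^{(k)}$. The key step is to check that
\[
\Psi:\mathbf{P}_{-,\mathrm{u}}\times\mathfrak{p}'_{\mathrm{u}}\times\mathfrak{p}_{\mathrm{u}}^{k-1}\longrightarrow V'_{k},\qquad (p,w_{1},\ldots,w_{k})\longmapsto\left(p.\iota(1,w_{1}),\ldots,p.\iota(1,w_{k})\right)
\]
is an isomorphism. Well-definedness and injectivity follow from Proposition \ref{prop:stab-yota-x} (ii) applied to the first coordinate together with the injectivity of $x\mapsto\iota(1,x)$. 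For surjectivity, given $(z_{1},\ldots,z_{k})\in V'_{k}$ I write $z_{1}=p.\iota(1,w_{1})$ with $p\in\mathbf{P}_{-,\mathrm{u}}$ and $w_{1}\in\mathfrak{p}'_{\mathrm{u}}$ via $\delta^{-1}$, then translate by $p^{-1}$: since $X^{(k)}$ is $G$-stable, $(\iota(1,w_{1}),p^{-1}.z_{2},\ldots,p^{-1}.z_{k})$ lies in $X^{(k)}$ with first coordinate in $X'$, and for any representative $\overline{(g,v_{1},\ldots,v_{k})}$ one checks $v_{1}\in\mathfrak{p}'_{\mathrm{u}}$ via $\alpha$ and then Proposition \ref{prop:stab-yota-x} (iii) forces $g\in\mathbf{P}$, whence $p^{-1}.z_{i}=\iota(1,u_{i})$ for suitable $u_{i}\in\mathfrak{p}_{\mathrm{u}}$. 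The inverse of $\Psi$ is then $(z_{1},\ldots,z_{k})\mapsto(p,w_{1},\iota^{-1}(p^{-1}.z_{2}),\ldots,\iota^{-1}(p^{-1}.z_{k}))$, a morphism by the two preliminary facts, so $V'_{k}$ is smooth. Since $V_{k}=\bigcup_{\sigma\in\mathfrak{S}_{k}}\sigma(G.V'_{k})$ and $X^{(k)}$ is $G$- and $\mathfrak{S}_{k}$-stable, $V_{k}$ is a smooth open subset.

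For (ii) I take $\varphi:=\alpha^{k}|_{X^{(k)}}$. As $\tau_{X^{k}}=\tau_{X}^{\times k}$ is projective and $G\times_{\mathbf{P}}\mathfrak{p}_{\mathrm{u}}^{k}$ embeds as a closed ``$G$-diagonal'' subvariety of $(G\times_{\mathbf{P}}\mathfrak{p}_{\mathrm{u}})^{k}$, the morphism $\tau_{X^{(k)}}$ is projective, so $X^{(k)}$ is closed in $X^{k}$ and $\varphi$ is finite; and $\alpha^{k}\circ\tau_{X^{(k)}}=\tau_{k}$ shows its image is $\tau_{k}(G\times_{\mathbf{P}}\mathfrak{p}_{\mathrm{u}}^{k})=\mathcal{P}_{\mathrm{u}}^{(k)}$, giving surjectivity. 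For (iii), a point of $X^{(k)}\setminus V_{k}$ admits a representative $\overline{(g,w_{1},\ldots,w_{k})}$ with no $w_{i}\in\mathfrak{p}'_{\mathrm{u}}$, so $X^{(k)}\setminus V_{k}$ and the corresponding bad locus in $G\times_{\mathbf{P}}\mathfrak{p}_{\mathrm{u}}^{k}$ both lie in the image of $G\times_{\mathbf{P}}(\mathfrak{p}_{\mathrm{u}}\setminus\mathfrak{p}'_{\mathrm{u}})^{k}$; since $\mathfrak{p}'_{\mathrm{u}}$ is dense in $\mathfrak{p}_{\mathrm{u}}$, this has dimension at most $\dim\mathfrak{p}_{\mathrm{u}}+k(\dim\mathfrak{p}_{\mathrm{u}}-1)$, which is $k$ below $\dim X^{(k)}=(k+1)\dim\mathfrak{p}_{\mathrm{u}}$, so the codimension is at least $k$. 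Finally, for (iv), the surjectivity computation in (i) shows $\tau_{X^{(k)}}$ has one-element fibres over $V_{k}$ (the argument of Proposition \ref{prop:desing-of-qik} (ii) with $\delta$ in place of $\theta$); being projective and, by this, birational, it restricts to an isomorphism over the smooth open $V_{k}$ by Zariski's Main Theorem.

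The main obstacle I expect is part (i): because $X=\mathrm{Spec}\,\mathcal{A}$ is the abstract normalization rather than a concrete subvariety of $\mathfrak{g}\times(\mathfrak{l}//\mathbf{L})$ as in the $\chi$-case, the inverse of $\Psi$ cannot be read off explicit coordinates, and everything hinges on promoting $x\mapsto\iota(1,x)$ to a closed immersion through $\alpha\circ\iota=\mathrm{id}_{\mathfrak{p}_{\mathrm{u}}}$ and on the equivariance step that the auxiliary group element lies in $\mathbf{P}$. Checking that $\tau_{X^{(k)}}$ is projective, so that $X^{(k)}$ is closed and (iv) applies, is the other point requiring care.
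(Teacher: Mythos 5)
Your proposal is correct and follows essentially the same route as the paper: the same open set $V'_{k}=\mathbf{P}_{-,\mathrm{u}}(X')\times X^{k-1}\cap X^{(k)}$ trivialized via $\delta$ and Proposition \ref{prop:stab-yota-x}, the morphism $\varphi$ induced by $\alpha$, the dimension count through $G\times_{\mathbf{P}}(\mathfrak{p}_{\mathrm{u}}\setminus\mathfrak{p}'_{\mathrm{u}})^{k}$, and injectivity plus Zariski's Main Theorem for (iv). Your preliminary observations ($\alpha\circ\iota=\mathrm{id}_{\mathfrak{p}_{\mathrm{u}}}$ making $\iota$ a closed immersion, and the projectivity of $\tau_{X^{(k)}}$) only make explicit what the paper uses implicitly.
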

\begin{proof}
(i) Set $V_{k}':=\mathbf{P}_{-,\mathrm{u}}\left(X'\right)\times X^{k-1}\cap X^{\left(k\right)}$
and
\[
V':=\left\{ \left(x_{1},\ldots,x_{k}\right)\in\left(\iota\left(\left\{ 1\right\} \times\mathfrak{p}{}_{\mathrm{u}}\right)\right)^{k}\mid\alpha\left(x_{1}\right)\in\mathfrak{p}{}_{\mathrm{u}}'\right\} .
\]
By Proposition $\ref{prop:stab-yota-x}$ (ii), $V_{k}'$ is an open
subset of $X^{\left(k\right)}$. Let $g\in G$ and $x_{1}\in X'$
such that $g.x_{1}\in\mathbf{P}_{-,\mathrm{u}}\left(X'\right)$.
Then there exist 
\[
\begin{array}{ccccc}
q\in\mathbf{P}_{-,\mathrm{u}}, & y_{1}\in X', & x_{1}',y_{1}'\in\mathfrak{p}{}_{\mathrm{u}}' & \mbox{and} & p\in\mathbf{P}\end{array}
\]
 such that
\[
\begin{array}{ccccc}
g\left(x_{1}\right)=q\left(y_{1}\right), & x_{1}=\iota\left(1,x_{1}'\right), & y_{1}=\iota\left(1,y_{1}'\right) & \mbox{and} & y_{1}'=p\left(x_{1}'\right)\end{array}.
\]
 So
\[
g.\iota\left(1,x_{1}'\right)=g.x_{1}=q.\iota\left(1,y_{1}'\right)=qp.\iota\left(1,x_{1}'\right)
\]
 and $p^{-1}q^{-1}g\in G^{\iota\left(1,x_{1}'\right)}$. Hence, by
Proposition $\ref{prop:stab-yota-x}$ (i), $g\in\mathbf{P}_{-,\mathrm{u}}\mathbf{P}$.
As a result 
\[
\begin{array}{ccc}
\mathbf{P}_{-,\mathrm{u}}\times V' & \longrightarrow & V_{k}'\\
\left(g,x_{1},\ldots,x_{k}\right) & \longmapsto & \left(g.x_{1},\ldots,g.x_{k}\right)
\end{array}
\]
 is an isomorphism whose inverse is given by 
\[
\begin{array}{ccc}
V_{k}' & \longrightarrow & \mathbf{P}_{-,\mathrm{u}}\times V'\\
\left(x_{1},\ldots,x_{k}\right) & \longmapsto & \left(g_{1},y_{1},g_{1}^{-1}.x_{2},\ldots,g_{1}^{-1}.x_{k}\right),
\end{array}
\]
where $\left(g_{1},y_{1}\right)=\delta^{-1}\left(x_{1}\right)$. Hence
$V_{k}'$ and $G.V_{k}'$ are smooth open subsets of $X^{\left(k\right)}$,
whence the assertion since $X^{\left(k\right)}$ is $\mathfrak{S}_{k}$-invariant.

(ii) The canonical projection $\alpha$ from $X$ to $G\left(\mathfrak{p}{}_{\mathrm{u}}\right)$
induces a morphism $\varphi$ from $X^{\left(k\right)}$ to $\mathcal{P}_{\mathrm{u}}^{\left(k\right)}$
such that $\tau_{k}=\varphi\circ\tau_{X^{\left(k\right)}}$. Since
$\alpha$ is a finite morphism, $\varphi$ is too and since $\tau_{k}$
is surjective, $\varphi$ is too.

(iii) By definition of $V_{k}$, $X^{\left(k\right)}\backslash V_{k}$
is contained in the image of $G\times_{\mathbf{P}}\left(\mathfrak{p}{}_{\mathrm{u}}\setminus\mathfrak{p}{}_{\mathrm{u}}'\right)^{k}$
by $\tau_{X^{\left(k\right)}}$ and $G\times_{\mathbf{P}}\left(\mathfrak{p}{}_{\mathrm{u}}^{k}\setminus\varphi\left(V_{k}\right)\right)$
is contained in $G\times_{\mathbf{P}}\left(\mathfrak{p}{}_{\mathrm{u}}\setminus\mathfrak{p}{}_{\mathrm{u}}'\right)^{k}$.
Hence 
\[
{\dim X^{\left(k\right)}\setminus V_{k}\leqslant\dim G\times_{\mathbf{P}}\left(\mathfrak{p}{}_{\mathrm{u}}\setminus\mathfrak{p}{}_{\mathrm{u}}'\right)^{k}\leqslant\left(k+1\right)\dim\mathfrak{p}{}_{\mathrm{u}}-k\atop \dim G\times_{\mathbf{P}}\left(\mathfrak{p}^{k}\setminus\varphi\left(V_{k}\right)\right)\leqslant\dim G\times_{\mathbf{P}}\left(\mathfrak{p}{}_{\mathrm{u}}\setminus\mathfrak{p}{}_{\mathrm{u}}'\right)^{k}\leqslant\left(k+1\right)\dim\mathfrak{p}{}_{\mathrm{u}}-k,}
\]
 whence the assertion since $\dim G\times_{\mathbf{P}}\mathfrak{\mathfrak{p}{}_{\mathrm{u}}}=\left(k+1\right)\dim\mathfrak{p}{}_{\mathrm{u}}$.

(iv) Let $\left(1,x_{1},\ldots,x_{k}\right)$ and $\left(g,y_{1},\ldots,y_{k}\right)$
be in $G\times\mathfrak{p}{}_{\mathrm{u}}^{k}$ such that $x_{1}$
and $y_{1}$ are in $\mathfrak{p}{}_{\mathrm{u}}'$ and 
\[
\tau_{X^{\left(k\right)}}\overline{\left(1,x_{1},\ldots,x_{k}\right)}=\tau_{X^{\left(k\right)}}\overline{\left(g,y_{1},\ldots,y_{k}\right)}\in V_{k},
\]
 where $\overline{\left(1,x_{1},\ldots,x_{k}\right)}$ and $\overline{\left(g,y_{1},\ldots,y_{k}\right)}$
are the images of $\left(1,x_{1},\ldots,x_{k}\right)$ and $\left(g,y_{1},\ldots,y_{k}\right)$
in $G\times_{\mathbf{P}}\mathfrak{p}{}_{\mathrm{u}}^{k}$ by the quotient
map. Then there exists $p\in\mathbf{P}$ such that $y_{1}=p\left(x_{1}\right)$.
So 
\[
\tau_{X}\circ\kappa\left(1,x_{1}\right)=\tau_{X}\circ\kappa\left(g,y_{1}\right)=g.\tau_{X}\circ\kappa\left(1,y_{1}\right)=gp.\tau_{X}\circ\kappa\left(1,x_{1}\right),
\]
 then 
\[
\iota\left(1,x_{1}\right)=gp.\iota\left(1,x_{1}\right)\in X'.
\]
So, by Proposition \ref{prop:stab-yota-x} (iii), $g\in\mathbf{P}$
and $\overline{\left(1,x_{1},\ldots,x_{k}\right)}=\overline{\left(g,y_{1},\ldots,y_{k}\right)}$.
Hence the restriction of $\tau_{X^{\left(k\right)}}$ to $\tau_{X^{\left(k\right)}}^{-1}\left(V_{k}\right)$
is injective since $V_{k}$ is $\mathfrak{S}_{k}$-invariant, whence
the assertion by Zariski Main Theorem \cite{key-5} since $V_{k}$
is smooth open subset of $X^{\left(k\right)}$ by (i).\end{proof}
\begin{prop}
\label{prop:desing-Xk}The variety $G\times_{\mathbf{P}}\mathfrak{p}{}_{\mathrm{u}}^{k}$
is a desingularization of $X^{\left(k\right)}$ of morphism $\tau_{X^{\left(k\right)}}$.\end{prop}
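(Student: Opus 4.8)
The plan is to verify directly the three defining properties of a desingularization for the morphism $\tau_{X^{(k)}}\colon G\times_{\mathbf{P}}\mathfrak{p}_{\mathrm{u}}^{k}\to X^{(k)}$: that the source is smooth, that $\tau_{X^{(k)}}$ is projective, and that it is birational. The essential geometric work has already been carried out in Proposition \ref{prop:Vk-smooth}, so the argument amounts to assembling those facts; this exactly parallels the passage from Proposition \ref{prop:Wk-smooth} to the desingularization statement for $\chi^{(k)}$.

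First I would recall, as in the proof of Proposition \ref{prop:tauX}, that $G\times_{\mathbf{P}}\mathfrak{p}_{\mathrm{u}}^{k}$ is smooth, being a vector bundle with fiber $\mathfrak{p}_{\mathrm{u}}^{k}$ over the smooth projective variety $G/\mathbf{P}$. In particular it is irreducible, so $X^{(k)}$, as its image under $\tau_{X^{(k)}}$, is irreducible as well.

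Next I would establish that $\tau_{X^{(k)}}$ is projective. By Proposition \ref{prop:Vk-smooth} (ii) there is a finite morphism $\varphi$ with $\tau_{k}=\varphi\circ\tau_{X^{(k)}}$, and $\tau_{k}$ is projective, being the restriction of $\mu_{k}$ to $G\times_{\mathbf{P}}\mathfrak{p}_{\mathrm{u}}^{k}$ (Lemma \ref{Lem1.4}). Since $\varphi$ is finite, hence separated, the morphism $\tau_{X^{(k)}}$ is proper; and as both source and target are quasi-projective over $\Bbbk$, a proper morphism is automatically projective. Equivalently, $\tau_{X^{(k)}}$ is the restriction of the projective morphism $\tau_{X}\times\cdots\times\tau_{X}$ to the closed subvariety $G\times_{\mathbf{P}}\mathfrak{p}_{\mathrm{u}}^{k}$ of $\left(G\times_{\mathbf{P}}\mathfrak{p}_{\mathrm{u}}\right)^{k}$, which sits over the diagonal of $(G/\mathbf{P})^{k}$.

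Finally I would deduce birationality from Proposition \ref{prop:Vk-smooth}. By part (i) the subset $V_{k}$ is a nonempty smooth open subset of $X^{(k)}$---nonempty because $\mathfrak{p}'_{\mathrm{u}}=\Omega\cap\mathfrak{p}_{\mathrm{u}}$ is nonempty---and by part (iv) the restriction of $\tau_{X^{(k)}}$ to $\tau_{X^{(k)}}^{-1}(V_{k})$ is an isomorphism onto $V_{k}$. As $X^{(k)}$ is irreducible, $V_{k}$ is dense, so $\tau_{X^{(k)}}$ is birational. Being a projective birational morphism from the smooth variety $G\times_{\mathbf{P}}\mathfrak{p}_{\mathrm{u}}^{k}$, it is a desingularization of $X^{(k)}$, as claimed. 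The only step requiring genuine care is the projectivity, where one must justify descending projectivity through the finite morphism $\varphi$ (via properness and quasi-projectivity); everything else follows formally from the preceding propositions.
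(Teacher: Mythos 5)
Your proposal is correct and follows essentially the same route as the paper: smoothness of $G\times_{\mathbf{P}}\mathfrak{p}_{\mathrm{u}}^{k}$ as a vector bundle over $G/\mathbf{P}$, projectivity of $\tau_{X^{(k)}}$ deduced from the factorization $\tau_{k}=\varphi\circ\tau_{X^{(k)}}$ with $\varphi$ finite, and birationality from Proposition \ref{prop:Vk-smooth} (iv). Your added justifications (descending projectivity through the finite morphism via properness and quasi-projectivity, and checking that $V_{k}$ is nonempty and dense) merely make explicit steps the paper leaves implicit.
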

\begin{proof}
Since $G\times_{\mathbf{P}}\mathfrak{p}^{k}{}_{\mathrm{u}}$ is a
vector bundle on the smooth variety $G/\mathbf{P}$, it is a smooth
variety. According to Lemma \ref{Lem1.4}, $\tau_{k}$ is a projective
morphism then $\tau_{X^{\left(k\right)}}$ is too since $\tau_{k}=\varphi\circ\tau_{X^{\left(k\right)}}$.
Hence, by Proposition \ref{prop:Vk-smooth} (iv), $G\times_{\mathbf{P}}\mathfrak{p}^{k}_{\mathrm{u}}$
is a desingularization of $X^{\left(k\right)}$.\end{proof}

\subsection{}

Let $x$ be in $\mathfrak{p}_{\mathrm{u}}'$ and let $\left(G^{x}\right)_{0}$
be the neutral component of $G^{x}$. The canonical map $G/G^x\rightarrow G/\left(G^x\right)_0$ induces an embedding of $\Bbbk\left[G\left(\mathfrak{p}_{\mathrm{u}}\right)\right]$ into $\Bbbk\left(G/\left(G^x\right)_0\right)$. Let $\mathcal{C}$ be the integral closure of $\Bbbk\left[G\left(\mathfrak{p}_{\mathrm{u}}\right)\right]$ in $\Bbbk\left(G/\left(G^{x}\right)_{0}\right)$.
\begin{prop}
(i) The field $\Bbbk\left(G/\left(G^{x}\right)_{0}\right)$ is a Galois
extension of the field $\Bbbk\left(G/G^{x}\right)$ of Galois group
$\Gamma=G^{x}/\left(G^{x}\right)_{0}$.

(ii) The subalgebra $\mathcal{C}$ is invariant under the action of $\Gamma$ and $\Bbbk\left[G\left(\mathfrak{p}_{\mathrm{u}}\right)_{\mathrm{n}}\right]$
is the set of fixed points by $\Gamma$ in $\mathcal{C}$, where $G\left(\mathfrak{p}_{\mathrm{u}}\right)_{\mathrm{n}}$
is the normalization of $G\left(\mathfrak{p}_{\mathrm{u}}\right)$.\end{prop}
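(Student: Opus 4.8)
The plan is to realize $\Bbbk\left(G/\left(G^{x}\right)_{0}\right)$ as the function field of a $\Gamma$-torsor over $G/G^{x}$ and then to run the standard descent for integral closures under a finite group action.

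For assertion (i): since $\Bbbk$ has characteristic zero and $\left(G^{x}\right)_{0}$ is the identity component of the algebraic group $G^{x}$, it is a closed normal subgroup of finite index, so $\Gamma=G^{x}/\left(G^{x}\right)_{0}$ is a finite group. Right multiplication by $G^{x}$ on $G$ descends to a right action of $\Gamma$ on the homogeneous space $G/\left(G^{x}\right)_{0}$, given by $g\left(G^{x}\right)_{0}\cdot\bar{s}=gs\left(G^{x}\right)_{0}$ for $s\in G^{x}$; this action is free, and its geometric quotient is exactly $G/G^{x}$, with quotient map the canonical projection $\pi:G/\left(G^{x}\right)_{0}\rightarrow G/G^{x}$. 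I would then pass to function fields: $\Gamma$ acts faithfully on $L:=\Bbbk\left(G/\left(G^{x}\right)_{0}\right)$ by $\Bbbk$-automorphisms, and the field of invariants $L^{\Gamma}$ consists precisely of those rational functions descending to the quotient $G/G^{x}$, that is $L^{\Gamma}=\Bbbk\left(G/G^{x}\right)$. By Artin's theorem, $L$ is then a Galois extension of $L^{\Gamma}=\Bbbk\left(G/G^{x}\right)$ with Galois group $\Gamma$, which is assertion (i).

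For assertion (ii): write $A:=\Bbbk\left[G\left(\mathfrak{p}_{\mathrm{u}}\right)\right]$ and $K:=\Bbbk\left(G/G^{x}\right)$. Since $x\in\mathfrak{p}_{\mathrm{u}}'=\Omega\cap\mathfrak{p}_{\mathrm{u}}$ lies in the Richardson orbit, $G\cdot x=\Omega$ is the dense orbit of $G\left(\mathfrak{p}_{\mathrm{u}}\right)=\overline{\Omega}$, so $G\left(\mathfrak{p}_{\mathrm{u}}\right)$ is irreducible with fraction field $K$, and the embedding $A\hookrightarrow L$ of the statement factors through $K$. For the $\Gamma$-invariance of $\mathcal{C}$, note that each $\gamma\in\Gamma$ fixes $A$ pointwise because $A\subseteq K=L^{\Gamma}$ by (i); hence if $c\in\mathcal{C}$ satisfies a monic relation over $A$, so does $\gamma\left(c\right)$, whence $\gamma\left(c\right)\in\mathcal{C}$ and $\mathcal{C}$ is $\Gamma$-stable. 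For the fixed points, I would compute
\[
\mathcal{C}^{\Gamma}=\mathcal{C}\cap L^{\Gamma}=\mathcal{C}\cap K,
\]
using $L^{\Gamma}=K$. But $\mathcal{C}\cap K$ is exactly the set of elements of $K$ integral over $A$, that is the integral closure of $A$ in its own fraction field $K$, which by definition is the coordinate ring $\Bbbk\left[G\left(\mathfrak{p}_{\mathrm{u}}\right)_{\mathrm{n}}\right]$ of the normalization. This yields $\mathcal{C}^{\Gamma}=\Bbbk\left[G\left(\mathfrak{p}_{\mathrm{u}}\right)_{\mathrm{n}}\right]$, completing (ii).

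The computational core is entirely in part (i): one must verify that the geometric quotient of $G/\left(G^{x}\right)_{0}$ by the free finite action of $\Gamma$ is the homogeneous space $G/G^{x}$, and that the invariant field $L^{\Gamma}$ is $\Bbbk\left(G/G^{x}\right)$ rather than a strictly larger field. Because the action is free and the characteristic is zero, the quotient map $\pi$ is finite \'etale of degree $\left|\Gamma\right|$, which simultaneously identifies the quotient and forces $\left[L:L^{\Gamma}\right]=\left|\Gamma\right|$; once these are in hand, Artin's theorem and the functorial behaviour of integral closure under the Galois action render (ii) purely formal.
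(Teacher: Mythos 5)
Your proposal is correct and follows essentially the same route as the paper: both parts rest on the identity $\Bbbk\left(G/\left(G^{x}\right)_{0}\right)^{\Gamma}=\Bbbk\left(G/G^{x}\right)$ together with Artin's theorem for (i), and on the observation that $\mathcal{C}^{\Gamma}=\mathcal{C}\cap\Bbbk\left(G/G^{x}\right)$ is the integral closure of $\Bbbk\left[G\left(\mathfrak{p}_{\mathrm{u}}\right)\right]$ in its own fraction field for (ii). The only difference is that you justify the invariant-field computation via the free $\Gamma$-action on $G/\left(G^{x}\right)_{0}$, where the paper simply asserts it; this is a welcome addition of detail, not a different argument.
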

\begin{proof}
(i) Since $\Bbbk\left(G/\left(G^{x}\right)_{0}\right)^{\Gamma}=\Bbbk\left(G/G^{x}\right)$,
$\Bbbk\left(G/\left(G^{x}\right)_{0}\right)$ is a Galois extension
of Galois groupe $\Gamma$.

(ii) Since $\Bbbk\left[G\left(\mathfrak{p}_{\mathrm{u}}\right)\right]$
is invariant under the action of $\Gamma$, $\mathcal{C}$ is too.
Moreover, $\Bbbk\left[G\left(\mathfrak{p}_{\mathrm{u}}\right)_{\mathrm{n}}\right]$
is contained in $\Bbbk\left(G\left(\mathfrak{p}_{\mathrm{u}}\right)\right)=\Bbbk\left(G/G^{x}\right)$
the set of fixed points of $\Gamma$ in $\mathcal{C}$. Let $a$ be
in $\mathcal{C}$ such that $a$ is a fixed point by $\Gamma$. Then
$a$ is in $\Bbbk\left(G/G^{x}\right)$. Hence $a$ is in $\Bbbk\left[G\left(\mathfrak{p}_{\mathrm{u}}\right)_{\mathrm{n}}\right]$
since it is in $\mathcal{C}$, whence the assertion. 
\end{proof}

\begin{prop}
(i) The field $\Bbbk\left(G\times_{\mathbf{P}}\mathfrak{p}_{\mathrm{u}}^{k}\right)$
is the field of rational fractions\linebreak$\Bbbk\left(G\times_{\mathbf{P}}\mathfrak{p}_{\mathrm{u}}\right)\left(\tau_{1},\ldots,\tau_{m}\right)$
over $\Bbbk\left(G\times_{\mathbf{P}}\mathfrak{p}_{\mathrm{u}}\right)$,
where $m=\dim\mathfrak{p}_{\mathrm{u}}^{k-1}$. Moreover, it is a subfield of $\Bbbk\left(G/\left(G^{x}\right)_{0}\right)\left(\tau_{1},\ldots,\tau_{m}\right)$.

(ii) The field $\Bbbk\left(\mathcal{P}_{\mathrm{u}}^{\left(k\right)}\right)$
is the set of fixed points of $\Gamma$ in $\Bbbk\left(G/\left(G^{x}\right)_{0}\right)\left(\tau_{1},\ldots,\tau_{m}\right)$
under the trivial extension of its action on $\Bbbk\left(G/\left(G^{x}\right)_{0}\right)$.\end{prop}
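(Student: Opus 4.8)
The plan is to realise $G\times_{\mathbf{P}}\mathfrak{p}_{\mathrm{u}}^{k}$ as a vector bundle over $G\times_{\mathbf{P}}\mathfrak{p}_{\mathrm{u}}$ and to transport the Galois description of the preceding proposition along the fibre coordinates.

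For (i), the map $\overline{\left(g,x_{1},\ldots,x_{k}\right)}\mapsto\overline{\left(g,x_{1}\right)}$ is well defined on the quotient and exhibits $G\times_{\mathbf{P}}\mathfrak{p}_{\mathrm{u}}^{k}$ as the vector bundle over $G\times_{\mathbf{P}}\mathfrak{p}_{\mathrm{u}}$ associated with the $\mathbf{P}$-module $\mathfrak{p}_{\mathrm{u}}^{k-1}$, of rank $m=\dim\mathfrak{p}_{\mathrm{u}}^{k-1}$. Since a vector bundle over an irreducible variety is trivial over a dense open subset, its function field is purely transcendental of degree the rank over the base; a trivialisation produces the fibre coordinates $\tau_{1},\ldots,\tau_{m}$ with $\Bbbk\left(G\times_{\mathbf{P}}\mathfrak{p}_{\mathrm{u}}^{k}\right)=\Bbbk\left(G\times_{\mathbf{P}}\mathfrak{p}_{\mathrm{u}}\right)\left(\tau_{1},\ldots,\tau_{m}\right)$. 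For the second assertion, Proposition \ref{prop:stab-yota-x} (i) identifies the stabiliser of $\iota\left(1,x\right)$ with $\mathbf{P}\cap G^{x}$, and birationality of $\tau_{X}$ gives $\Bbbk\left(G\times_{\mathbf{P}}\mathfrak{p}_{\mathrm{u}}\right)\cong\Bbbk\left(G/\left(\mathbf{P}\cap G^{x}\right)\right)$. As $\left(G^{x}\right)_{0}\subseteq\mathbf{P}\cap G^{x}$, the quotient $G/\left(G^{x}\right)_{0}\rightarrow G/\left(\mathbf{P}\cap G^{x}\right)$ gives $\Bbbk\left(G/\left(\mathbf{P}\cap G^{x}\right)\right)\subseteq\Bbbk\left(G/\left(G^{x}\right)_{0}\right)$, and adjoining the $\tau_{i}$ yields the inclusion into $\Bbbk\left(G/\left(G^{x}\right)_{0}\right)\left(\tau_{1},\ldots,\tau_{m}\right)$.

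For (ii) I would first record that the fixed field equals $\Bbbk\left(G/G^{x}\right)\left(\tau_{1},\ldots,\tau_{m}\right)$: as $\Gamma$ fixes each $\tau_{i}$ and has fixed field $\Bbbk\left(G/G^{x}\right)$ on $\Bbbk\left(G/\left(G^{x}\right)_{0}\right)$ by the preceding proposition, writing a $\Gamma$-invariant element as a reduced fraction $P/Q$ and applying Hilbert's Theorem 90 to the resulting cocycle $\gamma\mapsto c_{\gamma}$ (defined by $\gamma P=c_{\gamma}P$, $\gamma Q=c_{\gamma}Q$) shows the invariants of the transcendental extension are the transcendental extension of the invariants. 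Next, $\tau_{k}=\varphi\circ\tau_{X^{\left(k\right)}}$ with $\tau_{X^{\left(k\right)}}$ birational (Proposition \ref{prop:desing-Xk}) and $\varphi$ finite (Proposition \ref{prop:Vk-smooth} (ii)), so its comorphism embeds $\Bbbk\left(\mathcal{P}_{\mathrm{u}}^{\left(k\right)}\right)$ into $\Bbbk\left(G\times_{\mathbf{P}}\mathfrak{p}_{\mathrm{u}}^{k}\right)\subseteq\Bbbk\left(G/\left(G^{x}\right)_{0}\right)\left(\tau_{1},\ldots,\tau_{m}\right)$. One inclusion is then geometric: the $\Gamma$-action is induced by the deck transformations of $G/\left(G^{x}\right)_{0}\rightarrow G/G^{x}$, it lies over the identity of $\mathcal{P}_{\mathrm{u}}^{\left(1\right)}=G\left(\mathfrak{p}_{\mathrm{u}}\right)$ and fixes the $\tau_{i}$, so every function pulled back along the first projection $\mathcal{P}_{\mathrm{u}}^{\left(k\right)}\rightarrow G\left(\mathfrak{p}_{\mathrm{u}}\right)$ together with the fibre coordinates is $\Gamma$-invariant, whence $\Bbbk\left(\mathcal{P}_{\mathrm{u}}^{\left(k\right)}\right)$ is contained in the fixed field. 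For equality I would use that both fields have transcendence degree $\dim\mathcal{P}_{\mathrm{u}}^{\left(k\right)}=\dim G\left(\mathfrak{p}_{\mathrm{u}}\right)+m$ over $\Bbbk$, and that the preceding proposition pins down the algebraic closure of $\Bbbk\left(G\left(\mathfrak{p}_{\mathrm{u}}\right)\right)$ through $\Bbbk\left[G\left(\mathfrak{p}_{\mathrm{u}}\right)_{\mathrm{n}}\right]=\mathcal{C}^{\Gamma}$; combined with the pure-transcendence statement of (i) this forces $\Bbbk\left(\mathcal{P}_{\mathrm{u}}^{\left(k\right)}\right)=\Bbbk\left(G/G^{x}\right)\left(\tau_{1},\ldots,\tau_{m}\right)$.

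The hard part will be this last equality rather than the containment: one must show that the extension $\Bbbk\left(G\times_{\mathbf{P}}\mathfrak{p}_{\mathrm{u}}^{k}\right)/\Bbbk\left(\mathcal{P}_{\mathrm{u}}^{\left(k\right)}\right)$ is governed exactly by $\Gamma$, so that no constants beyond those predicted by the preceding proposition appear over $\Bbbk\left(G\left(\mathfrak{p}_{\mathrm{u}}\right)\right)$. This is a monodromy computation for $\tau_{k}$: one analyses the generic fibre of $\mathcal{P}_{\mathrm{u}}^{\left(k\right)}\rightarrow G\left(\mathfrak{p}_{\mathrm{u}}\right)$, a union of affine spaces indexed by the parabolic nilradicals through a fixed Richardson element, and uses Proposition \ref{prop:stab-yota-x} (identifying point-stabilisers with $\mathbf{P}\cap G^{x}$) to match the permutation of these sheets with the action of $\Gamma$ on $G/\left(G^{x}\right)_{0}$ over $G/G^{x}$. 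Controlling this action, and hence the degree of $\tau_{k}$, is the crux; the vector-bundle structure of (i) then guarantees that the fibre directions contribute only the free transcendentals $\tau_{1},\ldots,\tau_{m}$ and introduce no further relations.
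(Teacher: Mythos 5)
Your part (i) is essentially the paper's argument: the projection $\overline{\left(g,x_{1},\ldots,x_{k}\right)}\mapsto\overline{\left(g,x_{1}\right)}$ realises $G\times_{\mathbf{P}}\mathfrak{p}_{\mathrm{u}}^{k}$ as a vector bundle of rank $m$ over $G\times_{\mathbf{P}}\mathfrak{p}_{\mathrm{u}}$, generic triviality gives the purely transcendental description, and the inclusion into $\Bbbk\left(G/\left(G^{x}\right)_{0}\right)\left(\tau_{1},\ldots,\tau_{m}\right)$ follows as you say.

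Part (ii) has a genuine gap, and you have located it yourself: everything hinges on the equality $\Bbbk\left(\mathcal{P}_{\mathrm{u}}^{\left(k\right)}\right)=\Bbbk\left(G\left(\mathfrak{p}_{\mathrm{u}}\right)\right)\left(\tau_{1},\ldots,\tau_{m}\right)$, and neither of the substitutes you offer proves it. The transcendence-degree count only shows the two fields have equal transcendence degree over $\Bbbk$; since $\Bbbk\left(G/\left(G^{x}\right)_{0}\right)\left(\tau_{1},\ldots,\tau_{m}\right)$ is a finite extension of $\Bbbk\left(G\left(\mathfrak{p}_{\mathrm{u}}\right)\right)\left(\tau_{1},\ldots,\tau_{m}\right)$, a priori $\Bbbk\left(\mathcal{P}_{\mathrm{u}}^{\left(k\right)}\right)$ could sit strictly between them, and ``this forces'' is not an argument. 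Note also that even your ``easy'' inclusion of $\Bbbk\left(\mathcal{P}_{\mathrm{u}}^{\left(k\right)}\right)$ into the fixed field presupposes that $\Bbbk\left(\mathcal{P}_{\mathrm{u}}^{\left(k\right)}\right)$ is generated by pullbacks from $G\left(\mathfrak{p}_{\mathrm{u}}\right)$ and the fibre coordinates --- which is the same unproved generation statement. The ``monodromy computation'' you defer to is not what the paper does, and is not needed. The paper's argument is local and elementary: take the first projection $\pi$ from $\mathcal{P}_{\mathrm{u}}^{\left(k\right)}\cap\left(G\left(\mathfrak{p}_{\mathrm{u}}'\right)\times\mathfrak{g}^{k-1}\right)$ to $G\left(\mathfrak{p}_{\mathrm{u}}'\right)$, pick a point $z$ at which $\mathcal{P}_{\mathrm{u}}^{\left(k\right)}$ is smooth and $\pi$ is a submersion, observe that the fibre through $z$ is open in an affine space of dimension $m$ (one of the finitely many sheets $\left(g\left(\mathfrak{p}_{\mathrm{u}}\right)\right)^{k-1}$ through the Richardson element $\pi\left(z\right)$), extend the $m$ affine coordinates of that space to regular functions $\tau_{1},\ldots,\tau_{m}$ on an affine neighbourhood $O$, and prove that $\psi=\left(\pi,\tau_{1},\ldots,\tau_{m}\right)$ is injective on some open subset by showing that the diagonal is an irreducible component of the coincidence locus $\Delta=\left\{ \left(x',x''\right)\mid\psi\left(x'\right)=\psi\left(x''\right)\right\} $ (local injectivity of $\psi$ on the fibres of $\pi$ forces $\dim\Delta=\dim O$). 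Generic injectivity in characteristic zero makes $\psi$ birational onto $G\left(\mathfrak{p}_{\mathrm{u}}\right)\times\Bbbk^{m}$, which is exactly the missing generation statement; the conclusion then follows from $\Bbbk\left(G\left(\mathfrak{p}_{\mathrm{u}}\right)\right)=\Bbbk\left(G/\left(G^{x}\right)_{0}\right)^{\Gamma}$ together with the (standard, as you note) fact that taking $\Gamma$-invariants commutes with adjoining indeterminates on which $\Gamma$ acts trivially. To repair your proof you must supply this birationality argument or an equivalent; nothing in your sketch currently does so.
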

\begin{proof}
(i) Since $\mathbf{P}_{-,\mathrm{u}}$ is isomorphic to an open subset
of $G/\mathbf{P}$ and since\linebreak $G\times_{\mathbf{P}}\mathfrak{p}_{\mathrm{u}}^{k}$
is a vector bundle over $G/\mathbf{P}$, $\mathbf{P}_{-,\mathrm{u}}\times\mathfrak{p}_{\mathrm{u}}^{k}$
is isomorphic to an open subset of $G\times_{\mathbf{P}}\mathfrak{p}_{\mathrm{u}}^{k}$.
Moreover, $G\times_{\mathbf{P}}\mathfrak{p}_{\mathrm{u}}^{k}$ is a vector bundle over $G\times_{\mathbf{P}}\mathfrak{p}_{\mathrm{u}}$ whose fibers are isomorphic to $\mathfrak{p}_{\mathrm{u}}^{k-1}$. Hence $\Bbbk\left(G\times_{\mathbf{P}}\mathfrak{p}_{\mathrm{u}}^{k}\right)$ equals $\Bbbk\left(G\times_{\mathbf{P}}\mathfrak{p}_{\mathrm{u}}\right)\left(\tau_{1},\ldots,\tau_{m}\right)$, where $m=\dim\mathfrak{p}_{\mathrm{u}}^{k-1}$. So it is a subfield of $\Bbbk\left(G/\left(G^{x}\right)_{0}\right)\left(\tau_{1},\ldots,\tau_{m}\right)$ since $\Bbbk\left(G/\left(G^{x}\right)_{0}\right)$ is an extension of $\Bbbk\left(G\times_{\mathbf{P}}\mathfrak{p}_{\mathrm{u}}\right)$.

(ii) Let $\pi$ be the first projection from $\mathcal{P}_{\mathrm{u}}^{\left(k\right)}\cap G\left(\mathfrak{p}_{\mathrm{u}}'\right)\times\mathfrak{g}^{k-1}$
to $G\left(\mathfrak{p}_{\mathrm{u}}'\right)$, let $x_{1}$ be in
$G\left(\mathfrak{p}_{\mathrm{u}}'\right)$ and let $z$ be in $\pi^{-1}\left(x_{1}\right)$
such that it is a smooth point of $\mathcal{P}_{\mathrm{u}}^{\left(k\right)}$
and $\pi$ is smooth at $z$. Then there exists a smooth affine open
subset $O$ containing $z$ in $\mathcal{P}_{\mathrm{u}}^{\left(k\right)}$ such that
$\pi_{O}$ the restriction of $\pi$ to $O$ is a submersion onto
an open subset of $G\left(\mathfrak{p}_{\mathrm{u}}'\right)$. Hence
the fiber of $\pi_{O}$ at $\pi\left(z\right)$ is an open subset
of an affine space. So the restrictions of the affine coordinates $\tau_{1},\ldots,\tau_{m}$ 
of this space to $\pi_{O}^{-1}\left(\pi\left(z\right)\right)$
are the restrictions of regular functions on $O$ which we denote
again $\tau_{1},\ldots,\tau_{m}$. Let $\psi$ be the morphism defined
by 
\[
\begin{array}{cccc}
\psi: & O & \longrightarrow & \pi\left(O\right)\times\Bbbk^{m}\\
 & y & \longmapsto & \left(\pi\left(y\right),\tau_{1}\left(y\right),\ldots,\tau_{m}\left(y\right)\right).
\end{array}
\]
 Then $\psi$ is a submersion at $z$. So, 
for all $y$ in an open subset $O'$ of $O$, containing $z$, the intersection of the kernels of the differential
at $y$ of the restrictions of $\tau_{1},\ldots,\tau_{m}$ to $F_{\pi\left(y\right)}$
the fiber of $\pi$ at $\pi\left(y\right)$ equals $\left\{ 0\right\}$
so that $\psi$ is locally injective on $F_{\pi\left(y\right)}$.
Let 
\[
\Delta:=\left\{ \left(x',x''\right)\in O'\times O'\mid\psi\left(x'\right)=\psi\left(x''\right)\right\} 
\]
 and let $\Delta_{O'}$ be the diagonal of $O'$. Hence the dimension
of $\Delta\cap F_{\pi\left(y\right)}\times F_{\pi\left(y'\right)}$
is zero, for $\left(y,y'\right)$ in $\Delta$. Then the dimensions
of $\Delta$ and of $O$ are equal. Hence $\Delta_{O'}$ is an irreducible
component of $\Delta$. Then there exists an open subset $O''$ of
$O$ such that the restriction of $\psi$ to $O''$ is injective. Hence
$\Bbbk\left(\mathcal{P}_{\mathrm{u}}^{\left(k\right)}\right)=
\Bbbk\left(G\left(\mathfrak{p}_{\mathrm{u}}\right)\right)\left(\tau_{1},\ldots,\tau_{m}\right)$, 
whence the assertion since $\Bbbk\left(G\left(\mathfrak{p}_{\mathrm{u}}\right)\right)=\Bbbk\left(G/\left(G^{x}\right)_{0}\right)^{\Gamma}$.
\end{proof}
Let $\mathcal{C}^{(k)}$ be the integral closure of $\Bbbk\left[\mathcal{P}_{\mathrm{u}}^{\left(k\right)}\right]$
in $\Bbbk\left(G/\left(G^{x}\right)_{0}\right)\left[\tau_{1},\ldots,\tau_{m}\right]$
and let $\left(\left(\mathcal{P}_{\mathrm{u}}^{\left(k\right)}\right)_{\mathrm{n}},\nu_{k}\right)$
be the normalization of $\mathcal{P}_{\mathrm{u}}^{\left(k\right)}$.
\begin{prop}
\label{prop:direct-fact}(i) The algebra $\Bbbk\left[\left(\mathcal{P}_{\mathrm{u}}^{\left(k\right)}\right)_{\mathrm{n}}\right]$
is the set of fixed points of $\Gamma$ in $\mathcal{C}^{(k)}$.

(ii) The algebra $\Bbbk\left[\left(\mathcal{P}_{\mathrm{u}}^{\left(k\right)}\right)_{\mathrm{n}}\right]$
is a direct factor of $\mathcal{C}^{(k)}$ as a $\Bbbk\left[\left(\mathcal{P}_{\mathrm{u}}^{\left(k\right)}\right)_{\mathrm{n}}\right]$-module.\end{prop}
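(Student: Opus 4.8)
The plan is to read off both assertions from the Galois description of the function fields established just above, the transition to rings of invariants being the only point that needs care. Set $F_{0}:=\Bbbk(G/G^{x})=\Bbbk(G(\mathfrak{p}_{\mathrm{u}}))$ and $E_{0}:=\Bbbk(G/(G^{x})_{0})$, so that $E_{0}/F_{0}$ is Galois with group $\Gamma$, and put $F:=F_{0}(\tau_{1},\ldots,\tau_{m})$ and $E:=E_{0}(\tau_{1},\ldots,\tau_{m})$. Since $\tau_{1},\ldots,\tau_{m}$ are algebraically independent and $\Gamma$ acts trivially on them, $E/F$ is again Galois with group $\Gamma$, and by the previous proposition $E^{\Gamma}=F=\Bbbk(\mathcal{P}_{\mathrm{u}}^{(k)})$. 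Every element of $\Bbbk[\mathcal{P}_{\mathrm{u}}^{(k)}]$ lies in $F=E^{\Gamma}$ and is thus fixed by $\Gamma$; as $\Gamma$ acts on $E_{0}[\tau_{1},\ldots,\tau_{m}]$ fixing $\Bbbk[\mathcal{P}_{\mathrm{u}}^{(k)}]$ pointwise, it stabilizes the integral closure $\mathcal{C}^{(k)}$. The identity $\Bbbk[(\mathcal{P}_{\mathrm{u}}^{(k)})_{\mathrm{n}}]=(\mathcal{C}^{(k)})^{\Gamma}$ of (i) is then proved by a double inclusion.

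One inclusion is immediate: a $\Gamma$-fixed element of $\mathcal{C}^{(k)}$ lies in $E^{\Gamma}\cap\mathcal{C}^{(k)}$, hence is an element of $F=\Bbbk(\mathcal{P}_{\mathrm{u}}^{(k)})$ integral over $\Bbbk[\mathcal{P}_{\mathrm{u}}^{(k)}]$, and so belongs to the normalization $\Bbbk[(\mathcal{P}_{\mathrm{u}}^{(k)})_{\mathrm{n}}]$. The reverse inclusion is the only real difficulty. An element of $\Bbbk[(\mathcal{P}_{\mathrm{u}}^{(k)})_{\mathrm{n}}]$ is integral over $\Bbbk[\mathcal{P}_{\mathrm{u}}^{(k)}]$ and lies in $F=E^{\Gamma}$, so it is automatically $\Gamma$-fixed; what must be shown is that it lands in $E_{0}[\tau_{1},\ldots,\tau_{m}]$, so as to lie in $\mathcal{C}^{(k)}$. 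For this I would use the fibration of $\mathcal{P}_{\mathrm{u}}^{(k)}$ over $G(\mathfrak{p}_{\mathrm{u}})$ of the previous subsection, in which $\tau_{1},\ldots,\tau_{m}$ are fibre coordinates coming from linear forms on the ambient $\mathfrak{g}^{k-1}$; this gives $\Bbbk[\mathcal{P}_{\mathrm{u}}^{(k)}]\subseteq F_{0}[\tau_{1},\ldots,\tau_{m}]$. Since $F_{0}[\tau_{1},\ldots,\tau_{m}]$ is a polynomial ring over a field, it is integrally closed in $F$, so the integral closure of $\Bbbk[\mathcal{P}_{\mathrm{u}}^{(k)}]$ in $F$, namely $\Bbbk[(\mathcal{P}_{\mathrm{u}}^{(k)})_{\mathrm{n}}]$, is contained in $F_{0}[\tau_{1},\ldots,\tau_{m}]\subseteq E_{0}[\tau_{1},\ldots,\tau_{m}]$. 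Being integral over $\Bbbk[\mathcal{P}_{\mathrm{u}}^{(k)}]$, it therefore lies in $\mathcal{C}^{(k)}$, which finishes (i).

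Assertion (ii) then follows by the Reynolds operator. As $\Bbbk$ has characteristic zero and $\Gamma$ is finite, $R:=\frac{1}{\left|\Gamma\right|}\sum_{\gamma\in\Gamma}\gamma$ is a projection of $\mathcal{C}^{(k)}$ onto $(\mathcal{C}^{(k)})^{\Gamma}=\Bbbk[(\mathcal{P}_{\mathrm{u}}^{(k)})_{\mathrm{n}}]$ restricting to the identity there. For $a\in\Bbbk[(\mathcal{P}_{\mathrm{u}}^{(k)})_{\mathrm{n}}]$ and $c\in\mathcal{C}^{(k)}$ one has $R(ac)=aR(c)$, since $a$ is $\Gamma$-invariant, so $R$ is $\Bbbk[(\mathcal{P}_{\mathrm{u}}^{(k)})_{\mathrm{n}}]$-linear and yields the splitting $\mathcal{C}^{(k)}=\Bbbk[(\mathcal{P}_{\mathrm{u}}^{(k)})_{\mathrm{n}}]\oplus\ker R$ of $\Bbbk[(\mathcal{P}_{\mathrm{u}}^{(k)})_{\mathrm{n}}]$-modules, which is the asserted direct factor. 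Thus the whole argument is formal once the reverse inclusion of (i) is secured, and that inclusion, resting on the polynomial dependence of $\Bbbk[\mathcal{P}_{\mathrm{u}}^{(k)}]$ on the fibre coordinates $\tau_{i}$, is the step I expect to require the most care.
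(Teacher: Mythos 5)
Your proof follows the paper's argument essentially verbatim: both directions of (i) come from identifying $\Bbbk\left(\mathcal{P}_{\mathrm{u}}^{\left(k\right)}\right)$ with the $\Gamma$-fixed field and using integrality over $\Bbbk\left[\mathcal{P}_{\mathrm{u}}^{\left(k\right)}\right]$, and (ii) is the same Reynolds-operator splitting $c\mapsto\frac{1}{|\Gamma|}\sum_{\gamma\in\Gamma}\gamma.c$. The only difference is that you explicitly justify the containment $\Bbbk\left[\left(\mathcal{P}_{\mathrm{u}}^{\left(k\right)}\right)_{\mathrm{n}}\right]\subseteq\mathcal{C}^{(k)}$ (via the integral closedness of $\Bbbk\left(G\left(\mathfrak{p}_{\mathrm{u}}\right)\right)\left[\tau_{1},\ldots,\tau_{m}\right]$), a step the paper asserts without comment; that addition is correct and welcome.
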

\begin{proof}
(i) Since $\Bbbk\left[\mathcal{P}_{\mathrm{u}}^{\left(k\right)}\right]$
is invariant under the action of $\Gamma$, $\mathcal{C}^{(k)}$ is
too. Moreover, $\Bbbk\left[\left(\mathcal{P}_{\mathrm{u}}^{\left(k\right)}\right)_{\mathrm{n}}\right]$
is contained in $\Bbbk\left(\mathcal{P}_{\mathrm{u}}^{\left(k\right)}\right)$
the set of fixed points of $\Gamma$ in $\mathcal{C}^{\left(k\right)}$.
Let $a$ be in $\mathcal{C}^{(k)}$ such that $a$ is a fixed point
by $\Gamma$. Then $a$ is in $\Bbbk\left(\mathcal{P}_{\mathrm{u}}^{\left(k\right)}\right)$
and it verify an equation of integral dependence on $\Bbbk\left[\mathcal{P}_{\mathrm{u}}^{\left(k\right)}\right]$.
Hence $a$ is in $\Bbbk\left[\left(\mathcal{P}_{\mathrm{u}}^{\left(k\right)}\right)_{\mathrm{n}}\right]$,
whence the assertion. 

(ii) Let $\Phi$ be the map 
\[
\begin{array}{ccccc}
\Phi: & \mathcal{C}^{(k)} & \longrightarrow & \Bbbk\left[\left(\mathcal{P}_{\mathrm{u}}^{\left(k\right)}\right)_{\mathrm{n}}\right]\\
 & c & \longmapsto & c^{\#}=\frac{1}{|\Gamma|}\mbox{{\ensuremath{\sum\limits _{\gamma\in\Gamma}}}}\gamma.c & .
\end{array}
\]
 Then $\Phi$ is a projection from $\mathcal{C}^{(k)}$ onto $\Bbbk\left[\left(\mathcal{P}_{\mathrm{u}}^{\left(k\right)}\right)_{\mathrm{n}}\right]$
by (i). Since $\left(bc\right)^{\#}=bc^{\#}$, for $b\in\Bbbk\left[\left(\mathcal{P}_{\mathrm{u}}^{\left(k\right)}\right)_{\mathrm{n}}\right]$
and $c\in\mathcal{C}^{(k)}$, $\mathcal{C}^{(k)}$ is the direct sum
as $\Bbbk\left[\left(\mathcal{P}_{\mathrm{u}}^{\left(k\right)}\right)_{\mathrm{n}}\right]$-module
of $\Bbbk\left[\left(\mathcal{P}_{\mathrm{u}}^{\left(k\right)}\right)_{\mathrm{n}}\right]$
and $\mathrm{M}_{k}:=\mathrm{Ker}\Phi$.\end{proof}

\begin{prop} There exists a pure morphism $\psi$ in the sense of \cite{key-11} from $X^{\left(k\right)}$ to $\left(\mathcal{P}_{\mathrm{u}}^{\left(k\right)}\right)_{\mathrm{n}}$
such that $\varphi=\nu_{k}\circ\psi$.
\end{prop}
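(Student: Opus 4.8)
The plan is to build $\psi$ as the morphism dual to an inclusion of coordinate rings taking place inside $\mathcal{C}^{(k)}$, and to read off its purity directly from the averaging operator $\Phi$ of Proposition \ref{prop:direct-fact}. The only genuinely geometric input is the normality of $X^{\left(k\right)}$; once that is granted, the construction of $\psi$, the factorization $\varphi=\nu_{k}\circ\psi$, and the purity are all formal consequences of the $\Gamma$-action on $\mathcal{C}^{(k)}$.

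First I would prove that $X^{\left(k\right)}$ is normal. By Proposition \ref{prop:Vk-smooth}(i),(iii) its smooth locus contains the open subset $V_{k}$, whose complement has codimension at least $k\geqslant2$, so $X^{\left(k\right)}$ is regular in codimension one. It then suffices to verify Serre's condition $S_{2}$, after which $R_{1}+S_{2}$ yields normality. This $S_{2}$ step is where I expect the real difficulty to lie: $X^{\left(k\right)}$ enters the picture only as the image of $\tau_{X^{\left(k\right)}}$ in $X^{k}$, so no depth estimate is automatic. I would attempt to obtain $S_{2}$ either from a Cohen--Macaulayness statement for $X^{\left(k\right)}$, or by a depth argument along the fibres of the desingularization $\tau_{X^{\left(k\right)}}$ of Proposition \ref{prop:desing-Xk}, exploiting that its source is smooth and that the locus over which it fails to be an isomorphism has codimension at least $k$.

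Granting normality, the morphism $\psi$ comes for free. Since $\varphi$ is finite and surjective by Proposition \ref{prop:Vk-smooth}(ii) it is dominant, so the universal property of the normalization makes the dominant morphism $\varphi$ from the normal variety $X^{\left(k\right)}$ factor uniquely as $\varphi=\nu_{k}\circ\psi$ with $\psi\colon X^{\left(k\right)}\to\left(\mathcal{P}_{\mathrm{u}}^{\left(k\right)}\right)_{\mathrm{n}}$ finite. At the level of rings everything is an inclusion inside $\mathcal{C}^{(k)}$: writing $\Bbbk\left(X^{\left(k\right)}\right)=\Bbbk\left(G\times_{\mathbf{P}}\mathfrak{p}_{\mathrm{u}}\right)\left(\tau_{1},\ldots,\tau_{m}\right)$ as a subfield of the fraction field of $R:=\Bbbk\left(G/\left(G^{x}\right)_{0}\right)\left[\tau_{1},\ldots,\tau_{m}\right]$, finiteness of $\varphi$ together with the normality of $R$ forces $\Bbbk\left[X^{\left(k\right)}\right]\subseteq R$ and integral over $\Bbbk\left[\mathcal{P}_{\mathrm{u}}^{\left(k\right)}\right]$, hence $\Bbbk\left[\mathcal{P}_{\mathrm{u}}^{\left(k\right)}\right]\subseteq\Bbbk\left[X^{\left(k\right)}\right]\subseteq\mathcal{C}^{(k)}$; moreover $\Bbbk\left[\left(\mathcal{P}_{\mathrm{u}}^{\left(k\right)}\right)_{\mathrm{n}}\right]=\left(\mathcal{C}^{(k)}\right)^{\Gamma}$ by Proposition \ref{prop:direct-fact}(i), and the integral closedness of $\Bbbk\left[X^{\left(k\right)}\right]$ in $\Bbbk\left(X^{\left(k\right)}\right)$ supplies exactly the containment $\left(\mathcal{C}^{(k)}\right)^{\Gamma}\subseteq\Bbbk\left[X^{\left(k\right)}\right]$ realizing the inclusion $\Bbbk\left[\left(\mathcal{P}_{\mathrm{u}}^{\left(k\right)}\right)_{\mathrm{n}}\right]\hookrightarrow\Bbbk\left[X^{\left(k\right)}\right]$ that defines $\psi$.

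Finally I would deduce purity from the operator $\Phi\left(c\right)=\tfrac{1}{|\Gamma|}\sum_{\gamma\in\Gamma}\gamma\cdot c$ of Proposition \ref{prop:direct-fact}(ii). Because $\Bbbk\left[X^{\left(k\right)}\right]\subseteq\mathcal{C}^{(k)}$, the restriction of $\Phi$ to $\Bbbk\left[X^{\left(k\right)}\right]$ takes values in $\left(\mathcal{C}^{(k)}\right)^{\Gamma}=\Bbbk\left[\left(\mathcal{P}_{\mathrm{u}}^{\left(k\right)}\right)_{\mathrm{n}}\right]$, is $\Bbbk\left[\left(\mathcal{P}_{\mathrm{u}}^{\left(k\right)}\right)_{\mathrm{n}}\right]$-linear since $\Gamma$ fixes that subring, and restricts to the identity on it; it is therefore a $\Bbbk\left[\left(\mathcal{P}_{\mathrm{u}}^{\left(k\right)}\right)_{\mathrm{n}}\right]$-linear retraction of the inclusion $\Bbbk\left[\left(\mathcal{P}_{\mathrm{u}}^{\left(k\right)}\right)_{\mathrm{n}}\right]\hookrightarrow\Bbbk\left[X^{\left(k\right)}\right]$. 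Thus $\Bbbk\left[\left(\mathcal{P}_{\mathrm{u}}^{\left(k\right)}\right)_{\mathrm{n}}\right]$ is a direct summand of $\Bbbk\left[X^{\left(k\right)}\right]$ as a module over itself, which is precisely the purity of $\psi$ in the sense of \cite{key-11}, and $\varphi=\nu_{k}\circ\psi$ has already been secured. The entire argument thus reduces to the normality of $X^{\left(k\right)}$, and establishing $S_{2}$ there is the point on which I expect all the work to concentrate.
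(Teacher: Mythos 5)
Your construction of $\psi$ and your purity argument coincide with the paper's: the paper likewise obtains $\psi$ from the universal property of the normalization applied to the finite surjective morphism $\varphi$ out of the normal variety $X^{\left(k\right)}$ (citing \cite{key-10}, ch.\ II, Ex.\ 3.8), and likewise deduces purity from Proposition \ref{prop:direct-fact} by writing $\Bbbk\left[X^{\left(k\right)}\right]=\Bbbk\left[\left(\mathcal{P}_{\mathrm{u}}^{\left(k\right)}\right)_{\mathrm{n}}\right]\oplus\left(\mathrm{M}_{k}\cap\Bbbk\left[X^{\left(k\right)}\right]\right)$, which is exactly your retraction by the averaging operator $\Phi$. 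You are in fact more explicit than the paper about the chain of inclusions $\Bbbk\left[\left(\mathcal{P}_{\mathrm{u}}^{\left(k\right)}\right)_{\mathrm{n}}\right]\subseteq\Bbbk\left[X^{\left(k\right)}\right]\subseteq\mathcal{C}^{(k)}$ that is needed for this decomposition to make sense; the paper leaves those containments implicit.

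The one point of divergence is the normality of $X^{\left(k\right)}$. You treat it as the crux, correctly extract $R_{1}$ from Proposition \ref{prop:Vk-smooth} (i) and (iii), and leave $S_{2}$ open; the paper simply writes ``Since $X^{\left(k\right)}$ is normal'' with no justification at all. Normality holds for $X=Spec\mathcal{A}$ by construction, but nothing in the text propagates it to the image $X^{\left(k\right)}\subseteq X^{k}$ when $k\geqslant2$, and in the Borel case the analogous normality statement is a substantive theorem of \cite{key-1} proved via rational singularities. So you have not missed an argument that the paper supplies; you have honestly isolated a step that the paper asserts without proof. As written, neither your proposal nor the paper's proof closes the $S_{2}$ gap, and your instinct that all the real work concentrates there is sound.
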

\begin{proof}
Since $X^{\left(k\right)}$ is normal and since $\varphi$ is surjective,
there exists a morphism $\psi$ from $X^{\left(k\right)}$ to $\left(\mathcal{P}_{\mathrm{u}}^{\left(k\right)}\right)_{\mathrm{n}}$
such that $\varphi=\nu_{k}\circ\psi$ (\cite{key-10}, ch. II, Ex.
3.8). By Proposition \ref{prop:direct-fact}, 
\[
\Bbbk\left[X^{\left(k\right)}\right]=\Bbbk\left[\left(\mathcal{P}_{\mathrm{u}}^{\left(k\right)}\right)_{\mathrm{n}}\right]\oplus\left(\mathrm{M}_{k}\cap\Bbbk\left[X^{\left(k\right)}\right]\right).
\]
 Hence $\psi$ is a pure morphism in the sense of \cite{key-11}.\end{proof}


\begin{thebibliography}{References}

\bibitem{key-7}A.V. Bolsinov, {\it Commutative families of functions related
to consistent Poisson brackets}, Acta Applicandae Mathematicae, $\mathbf{24}$ $\left(1991\right)$,
$\mathbf{n}^{\circ}\mathbf{1}$, p. $253\lyxmathsym{\textendash}274$.

\bibitem{key-11}J-F Boutot, {\it Singularit\'es rationnelles et quotients
par les groupes r\'eductifs}, Inventiones Mathematicae, $\mathbf{88}$
$\left(1987\right)$, p. $65\lyxmathsym{\textendash}68$.

\bibitem{key-1}J-Y Charbonnel and M. Zaiter, {\it On the commuting variety
of a reductive Lie algebra and other related varieties}, Preprint,
arXiv:1204.0377v1 {[}math.RT{]}.

\bibitem{key-16} D. H. Collingwood and W. M. McGovern, {\it Nilpotent orbits in semisimple Lie algebras}, Van Nostrand Reinhold $(1993)$, New York.

\bibitem{key-3}J. Dixmier, {\it Champs de vecteurs adjoints sur les groupes
et alg\`ebres de Lie semi-simples}, Journal fur die reine und angewandte
Mathematik, Band. $\mathbf{309}$ $(1979)$, $183\lyxmathsym{\textendash}190$.

\bibitem{key-10}R. Hartshorne, {\it Algebraic geometry}, GTM $52$, Springer.

\bibitem{key-4}B. Kostant, {\it Lie group representations on polynomial
rings}, American Journal of Mathematics, $\mathbf{85}$, $1963$, p. $327\lyxmathsym{\textendash}404$.

\bibitem{key-5} D. Mumford, {\it The Red Book of Varieties and Schemes},
Lecture Notes in Mathematics, N\textdegree{}$1358$ $(1988)$, Springer-Verlag,
Berlin, Heidelberg, New York, London, Paris, p.$123\lyxmathsym{\textendash}284$.

\bibitem{key-14} P. Tauvel and R.W.T. Yu, {\it Lie algebras and algebraic
groups}, Monographs in Mathematics $(2005)$, Springer, Berlin Heidelberg
New York. 

\bibitem{key-13} F.D. Veldkamp, {\it The center of the universal enveloping
algebra of a Lie algebra in characteristic p}, Annales Scientifiques
de L'Ecole Normale Sup\'erieure $5$ $(1972)$ ,p. $217\lyxmathsym{\textendash}240$. 
 

\end{thebibliography}
\end{document}